\newtheorem{theorem}{Theorem}[section]
\newtheorem{lemma}[theorem]{Lemma}
\newtheorem{proposition}[theorem]{Proposition}
\newtheorem{corollary}[theorem]{Corollary}
\newtheorem{remark}[theorem]{Remark}
\theoremstyle{definition}
\newtheorem{definition}[theorem]{Definition}
\theoremstyle{remark}
\newtheorem*{note*}{Note}
\numberwithin{equation}{section}
\newcommand{\rank}{\mathop{\operator@font rank}}
\newcommand{\conv}{\mathop{\operator@font conv}}
\newcommand{\vol}{\mathop{\operator@font vol}}
\newcommand{\onetagright}{\tagsleft@false}
\newcommand{\ls}{\leqslant}
\newcommand{\gr}{\geqslant}
\begin{document}
\small

\title{\bf Half-space depth of log-concave probability measures}

\medskip

\author{Silouanos Brazitikos, Apostolos Giannopoulos and Minas Pafis}

\date{\small \textit{Dedicated to the memory of Dimitris Gatzouras}}

\maketitle

\begin{abstract}\footnotesize Given a probability measure $\mu $ on ${\mathbb R}^n$, Tukey's half-space depth is defined
for any $x\in {\mathbb R}^n$ by $\varphi_{\mu }(x)=\inf\{\mu (H):H\in {\cal H}(x)\}$, where ${\cal H}(x)$ is the set
of all half-spaces $H$ of ${\mathbb R}^n$ containing $x$. We show that if $\mu $ is a non-degenerate log-concave
probability measure on ${\mathbb R}^n$ then
$$e^{-c_1n}\ls \int_{\mathbb{R}^n}\varphi_{\mu }(x)\,d\mu(x) \ls e^{-c_2n/L_{\mu}^2}$$
where $L_{\mu }$ is the isotropic constant of $\mu $ and $c_1,c_2>0$ are absolute constants. The proofs
combine large deviations techniques with a number of facts from the theory of $L_q$-centroid bodies of
log-concave probability measures. The same ideas lead to general estimates for the expected measure of random
polytopes whose vertices have a log-concave distribution.
\end{abstract}

\section{Introduction}

Let $\mu $ be a probability measure on ${\mathbb R}^n$. For any $x\in {\mathbb R}^n$ we denote by ${\cal H}(x)$ the set
of all half-spaces $H$ of ${\mathbb R}^n$ containing $x$. The function
$$\varphi_{\mu }(x)=\inf\{\mu (H):H\in {\cal H}(x)\}$$
is called Tukey's half-space depth. The first work in statistics where some form of the half-space depth appears
is an article of Hodges \cite{Hodges-1955} from 1955. Tukey introduced the half-space depth for data sets in \cite{Tukey-1975} as a
tool that enables efficient visualization of random samples in the plane. The term ``depth" also comes
from Tukey's article.  A formal definition of the half-space depth as a way to distinguish points that fit the overall
pattern of a multivariable probability distribution and to obtain an efficient description, visualization, and
nonparametric statistical inference for multivariable data, was given by
Donoho and Gasko in \cite{Donoho-Gasko-1982} (see also \cite{Small-1987}).
We refer the reader to the survey article of Nagy, Sch\"{u}tt and Werner
\cite{Nagy-Schutt-Werner-2019} for an overview of this topic, with an emphasis on its connections
with convex geometry, and many references.

In the first part of this article we study the expectation of the half-space depth in the context of log-concave probability measures.
In what follows, these are the Borel probability measures $\mu$ on $\mathbb R^n$ that satisfy $\mu(\lambda
A+(1-\lambda)B) \gr \mu(A)^{\lambda}\mu(B)^{1-\lambda}$ for any compact subsets $A,B\subseteq {\mathbb R}^n$ and any
$\lambda \in (0,1)$, as well as the non-degeneracy condition $\mu (H)<1$ for every hyperplane $H$ in ${\mathbb R}^n$.
The question whether there exists an absolute constant $c\in (0,1)$ such that
\begin{equation}\label{question-1}{\mathbb E}_{\mu }(\varphi_{\mu }):=\int_{{\mathbb R}^n}\varphi_{\mu }(x)\,d\mu (x)\ls c^n\end{equation}
for all $n\gr 1$ and all log-concave probability measures $\mu $ on ${\mathbb R}^n$ was asked in \cite{mathoverflow}
in connection with stochastic separability and applications to machine learning  and
error-correction mechanisms in artificial intelligence systems; for the origin of the
question we refer to \cite{GGT-2023} and to the references therein. In the context of asymptotic geometric analysis,
the validity of \eqref{question-1} implies that if $m\ls C^n$, where $C>1$ is an absolute constant,
then a set of $m$ independent random points with a log-concave distribution has, with probability close to $1$, the property that every point in the set can be separated from all others by a hyperplane.

Our first result shows that \eqref{question-1} holds true modulo the isotropic constant $L_{\mu}$ of $\mu$,
defined in \eqref{eq:L}.

\begin{theorem}\label{th:intro-1}
Let $\mu$ be a log-concave probability measure on ${\mathbb R}^n$, $n\gr n_0$. Then,
${\mathbb E}_{\mu }(\varphi_{\mu }) \ls \exp\left(-cn/L_{\mu}^2\right)$
where $L_{\mu }$ is the isotropic constant of $\mu $ and $c>0$, $n_0\in {\mathbb N}$ are absolute constants.
\end{theorem}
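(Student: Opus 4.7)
The plan is to reduce to the isotropic case, since both $\varphi_\mu$ and $L_\mu$ are affine invariants, and then to produce a pointwise bound on $\varphi_\mu$ that can be integrated against $\mu$. In the isotropic normalization $\int x_ix_j\,d\mu=\delta_{ij}$, the measure concentrates on the shell $|x|\asymp \sqrt{n}$ by Paouris' large-deviation and small-ball estimates. For $x\ne 0$, the natural test half-space through $x$ is $H_x=\{y:\langle y,x/|x|\rangle\gr |x|\}$, so $\varphi_\mu(x)\ls \mu(H_x)$, and Markov's inequality with exponent $q\gr 2$ gives
$$\mu(H_x)\ls \left(\frac{h_{Z_q(\mu)}(x/|x|)}{|x|}\right)^{q}.$$
The whole argument then turns on choosing $q$ so as to make the right-hand side as small as possible, uniformly in $x$ in the bulk of $\mu$.

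For the integration step I would split $\mathbb{R}^n$ into three regions according to $|x|$: a thin annulus $c_1\sqrt{n}\ls |x|\ls c_2\sqrt{n}$ carrying almost all of the $\mu$-mass; a small-ball region $|x|<c_1\sqrt{n}$ of exponentially small $\mu$-measure; and a large-deviation tail $|x|>c_2\sqrt{n}$, also of exponentially small $\mu$-measure. On the last two regions the trivial bound $\varphi_\mu\ls 1$ suffices, so only the annulus matters, and there one substitutes $|x|\asymp \sqrt{n}$ into the Markov estimate and optimizes in $q$.

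The decisive point --- and the only place the isotropic constant can enter --- is the calibration of $q$. With only the coarse universal bound $h_{Z_q(\mu)}(\theta)\ls Cq$, the optimization yields merely $e^{-c\sqrt{n}}$, which is far too weak. To reach $e^{-cn/L_\mu^2}$ one must use the finer theory of $L_q$-centroid bodies of isotropic log-concave measures: the fact that, up to a threshold $q\asymp n/L_\mu^2$, the body $Z_q(\mu)$ still behaves like $\sqrt{q/n}\,B_2^n$ in an appropriate volumetric sense, with the slack controlled by $L_\mu$. Plugging $q\asymp n/L_\mu^2$ into the pointwise Markov bound then delivers the required exponent. I expect the main technical obstacle to be producing a sufficiently $\theta$-uniform (or averaged over $\theta=x/|x|$) form of this centroid-body comparison, since the Markov bound is pinned to the specific direction $x/|x|$ rather than to a volumetric average over the sphere; most likely this is done by bounding $h_{Z_q(\mu)}(x/|x|)$ by the volume radius of $Z_q(\mu)$ on a set of $\mu$-mass close to one, and then threading the choice of $q$ through the shell-by-shell integration in $|x|$.
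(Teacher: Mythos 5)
Your overall ingredients are the right ones (affine invariance to reduce to the isotropic case, a Markov/Chernoff bound on the measure of a half-space through $x$, and Paouris' volumetric control of the centroid bodies at the scale $q\approx n/L_{\mu}^2$), but the architecture you propose has a genuine gap, and it sits exactly at the point you flag as ``the main technical obstacle''. Pinning the separating half-space to the direction $\theta=x/|x|$ means that the set of points where your bound fails to give $\varphi_{\mu}(x)\ls e^{-cq}$ is
$$\bigl\{x\neq 0: |x|\ls e\, h_{Z_q(\mu)}(x/|x|)\bigr\},$$
a \emph{star-shaped} set whose radial function is the \emph{support} function of $Z_q(\mu)$. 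The volume (and the $\mu$-measure) of this set is governed by $\int_{S^{n-1}}h_{Z_q(\mu)}(\theta)^n\,d\sigma(\theta)$, i.e.\ by high mean widths of $Z_q(\mu)$, not by $|Z_q(\mu)|$: a convex body of small volume can have support function of order $q$ (rather than $\sqrt{q}$) on a large set of directions, and for a general isotropic log-concave $\mu$ the bad directions can carry substantial angular $\mu$-mass. So the step ``bound $h_{Z_q(\mu)}(x/|x|)$ by the volume radius of $Z_q(\mu)$ on a set of $\mu$-mass close to one'' is not available, and optimizing $q$ against only the universal estimate $h_{Z_q(\mu)}(\theta)\ls Cq$ indeed stalls at $e^{-c\sqrt{n}}$, as you note. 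The radial three-region decomposition does not help with this, since the problem is angular, not radial.

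The fix is to optimize the exponential Markov bound over \emph{all} half-spaces with $x$ on their boundary, not just the one orthogonal to $x$: this gives $\varphi_{\mu}(x)\ls\exp(-\Lambda_{\mu}^{\ast}(x))$, where $\Lambda_{\mu}^{\ast}$ is the Cram\'{e}r transform, and it replaces your star-shaped bad set by the \emph{convex} sublevel set $B_t(\mu)=\{x:\Lambda_{\mu}^{\ast}(x)\ls t\}$. The regularity of log-concave moments (Borell's lemma) yields $B_t(\mu)\subseteq CZ_t(\mu)$, so the bad set now is a convex body whose volume \emph{is} controlled by Paouris' estimate $|Z_t(\mu)|^{1/n}\ls c\sqrt{t/n}$. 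Writing $\int e^{-\Lambda_{\mu}^{\ast}}d\mu=\int_0^{\infty}e^{-t}\mu(B_t(\mu))\,dt$ and using $\mu(B_t(\mu))\ls\|f_{\mu}\|_{\infty}|B_t(\mu)|\ls (cL_{\mu}\sqrt{t/n})^n$ for $2\ls t\ls b_0n$ with $b_0\approx 1/L_{\mu}^2$, and the trivial bound $\mu(B_t(\mu))\ls 1$ for $t>b_0n$, gives ${\mathbb E}_{\mu}(\varphi_{\mu})\ls n e^{-cn/L_{\mu}^2}$ with no shell decomposition, no small-ball estimate and no thin-shell input. I recommend you rebuild the proof around $\Lambda_{\mu}^{\ast}$ and the inclusion $B_t(\mu)\subseteq CZ_t(\mu)$ rather than around $h_{Z_q(\mu)}(x/|x|)$.
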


Background information on isotropic log-concave probability measures and the isotropic constant is
provided in Section~\ref{section-2}. The well-known hyperplane conjecture asks whether there exists an absolute constant $C>0$ such that $L_n\ls C$ for every $n\gr 2$, where
$$L_n=\sup\{L_{\mu }:\mu \ \hbox{is an isotropic log-concave probability measure on}\ {\mathbb R}^n\}.$$
The best known upper bound, due to Klartag \cite{Klartag-2023}, asserts that $L_n\ls C\sqrt{\ln n}$ for
some absolute constant $C>0$, therefore Theorem~\ref{th:intro-1} shows that
$${\mathbb E}_{\mu }(\varphi_{\mu }) \ls \exp\left(-cn/\ln n\right)$$
provided that $n$ is large enough. The quantity ${\mathbb E}_{\mu }(\varphi_{\mu })$ is affinely
invariant and hence for the proof of Theorem~\ref{th:intro-1} we may assume that $\mu $ is isotropic.
Actually, we obtain Theorem~\ref{th:intro-1} as a special case of a
more general result which is presented in Section~\ref{section-3}.

\begin{theorem}\label{th:intro-2}
Let $\mu$ and $\nu $ be two isotropic log-concave probability measures on ${\mathbb R}^n$, $n\gr n_0$. Then,
$${\mathbb E}_{\nu }(\varphi_{\mu }):=\int_{\mathbb{R}^n}\varphi_{\mu }(x)\,d\nu(x)\ls \exp\left(-cn/L_{\nu }^2\right),$$
where $c>0$, $n_0\in {\mathbb N}$ are absolute constants.
\end{theorem}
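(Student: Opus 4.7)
My plan is to bound $\varphi_{\mu}$ pointwise on the complement of a suitable dilate of the $L_q$-centroid body of $\mu$, and then convert this into an integral estimate against $\nu$ via the sup-norm bound on the density of an isotropic log-concave measure. Recall that $Z_q(\mu)$ is the symmetric convex body with support function $h_{Z_q(\mu)}(\theta)=\bigl(\int|\langle y,\theta\rangle|^q\,d\mu(y)\bigr)^{1/q}$. If $x\notin e\,Z_q(\mu)$, the support-function criterion provides a unit vector $\theta$ with $\langle x,\theta\rangle>e\,h_{Z_q(\mu)}(\theta)$; the half-space $H=\{y:\langle y,\theta\rangle\ge\langle x,\theta\rangle\}$ contains $x$, and Markov's inequality applied to the $q$-th moment yields
$$\mu(H)\le\frac{h_{Z_q(\mu)}(\theta)^q}{\langle x,\theta\rangle^q}\le e^{-q},$$
so that $\varphi_{\mu}(x)\le e^{-q}$ on $\mathbb{R}^n\setminus e\,Z_q(\mu)$.

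Combining this with the trivial bound $\varphi_{\mu}\le 1$, I get
$$\int_{\mathbb{R}^n}\varphi_{\mu}\,d\nu\le \nu\bigl(e\,Z_q(\mu)\bigr)+e^{-q}.$$
To control $\nu(eZ_q(\mu))$ I would invoke two standard ingredients from the theory of log-concave measures. First, the density of the isotropic log-concave measure $\nu$ satisfies $\|f_{\nu}\|_{\infty}=L_{\nu}^n$, so $\nu(A)\le L_{\nu}^n\,\vol(A)$ for every Borel set $A$. Second, Paouris' volume estimate $\vol(Z_q(\mu))^{1/n}\le C\sqrt{q/n}$ holds for any isotropic log-concave $\mu$ on $\mathbb{R}^n$ and every $2\le q\le n$, with an absolute constant $C$. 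Together these give
$$\nu\bigl(e\,Z_q(\mu)\bigr)\le L_{\nu}^n\,\vol(eZ_q(\mu))\le \bigl(eC\,L_{\nu}\,\sqrt{q/n}\bigr)^n.$$

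The last step is to optimize in $q$: choosing $q=c_0n/L_{\nu}^2$ with $c_0>0$ a small absolute constant so that $eCL_{\nu}\sqrt{q/n}\le 1/2$, one gets $\nu(eZ_q(\mu))\le 2^{-n}$ while $e^{-q}=\exp(-c_0n/L_{\nu}^2)$. Since $L_{\nu}$ is bounded below by a positive absolute constant, both terms are dominated by $\exp(-cn/L_{\nu}^2)$ for some absolute $c>0$. The constraints $2\le q\le n$ reduce to $L_{\nu}^2\le n$ and $c_0\le L_{\nu}^2$, both of which hold for $n\ge n_0$ in view of Klartag's bound $L_{\nu}\le C\sqrt{\ln n}$ and the universal lower bound $L_{\nu}\ge c>0$.

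The main obstacle is not conceptual but bookkeeping: identifying the correct quantitative forms of Paouris' volume estimate and of the $L_q$-moment large deviation inequality, and tracking which isotropic constant enters where. Notice that the measure $\mu$ contributes only through the volume of its $L_q$-centroid body, where no isotropic constant appears thanks to Paouris, while the entire dependence on $L_{\nu}$ comes from the density estimate for the target measure $\nu$. In particular, specializing $\nu=\mu$ recovers Theorem~\ref{th:intro-1}.
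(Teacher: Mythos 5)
Your argument is correct, and it takes a genuinely different (and somewhat more elementary) route than the paper's. The paper first passes to the Cram\'{e}r transform via the exponential Markov inequality, $\varphi_{\mu}(x)\ls e^{-\Lambda_{\mu}^{\ast}(x)}$, writes $\int\varphi_{\mu}\,d\nu\ls\int_0^{\infty}e^{-t}\nu(B_t(\mu))\,dt$ by the layer-cake formula, and then controls $|B_t(\mu)|$ by comparing $B_t(\mu)$ with $Z_t(\mu)$ (Proposition~\ref{prop:fact-4}, which rests on the $c$-regularity of log-concave measures, i.e.\ Borell's lemma) before invoking Paouris' volume bound (Theorem~\ref{th:fact-1}) and the density estimate $\|f_{\nu}\|_{\infty}=L_{\nu}^n$. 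You replace the exponential Markov inequality by the polynomial one applied to the $q$-th moment, which gives the pointwise bound $\varphi_{\mu}\ls e^{-q}$ directly off $e\,Z_q(\mu)$ and lets you skip both the Cram\'{e}r transform and the comparison $B_t(\mu)\subseteq C\,Z_t(\mu)$; the remaining ingredients (Paouris, the density bound, the choice $q\approx n/L_{\nu}^2$, and the checks $2\ls q\ls n$ and $L_{\nu}\gr c$) are the same, and your bookkeeping of them is sound. What your shortcut gives up is the stronger inequality \eqref{eq:stronger}, $\int e^{-\Lambda_{\mu}^{\ast}}\,d\nu\ls\exp(-cn/L_{\nu}^2)$, which the paper deliberately establishes because it is reused later (e.g.\ to deduce $\kappa_{\mu}\gr c/L_n^2$ via Jensen's inequality and to set up the threshold results of Section~\ref{section-5}); for the theorem as stated, however, your proof is complete.
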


The proof of Theorem~\ref{th:intro-2} starts with the known estimate $\varphi_{\mu}(x)\ls\exp(-\Lambda_{\mu}^{\ast}(x))$
where $\Lambda_{\mu}^{\ast}$ is the Cram\'{e}r transform of $\mu$ (defined in Section~\ref{section-2}),
and actually establishes the stronger inequality
\begin{equation}\label{eq:stronger}\int_{{\mathbb R}^n}e^{-\Lambda_{\mu}^{\ast}(x)}d\nu(x)
\ls \exp\left(-cn/L_{\nu}^2\right),\end{equation}
exploiting upper bounds for the volume of the sets $B_t(\mu)=\{x\in {\mathbb R}^n:\Lambda_{\mu}^{\ast}(x)\ls t\}$.
The assumption that both $\mu $ and $\nu $ are isotropic is not necessary. One can consider
a different type of normalization. We discuss this matter in Section~\ref{section-2} and we state another version of
Theorem~\ref{th:intro-2} that might be useful (see Theorem~\ref{th:variant}). In any case, setting $\nu =\mu $ we obtain Theorem~\ref{th:intro-1} as an immediate consequence of any of these statements.

In Section~\ref{section-4} we show that, apart from the value of the isotropic constant $L_{\mu }$,
the exponential estimate provided by Theorem~\ref{th:intro-1} is sharp.

\begin{theorem}\label{th:intro-3}Let $\mu $ be a log-concave probability measure on ${\mathbb R}^n$. Then,
$$\int_{{\mathbb R}^n}\varphi_{\mu}(x)d\mu(x)\gr e^{-cn},$$
where $c>0$ is an absolute constant.
\end{theorem}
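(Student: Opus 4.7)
The strategy is to establish a pointwise lower bound on $\varphi_{\mu}(x)$ matching, up to subexponential factors, the upper bound $\varphi_{\mu}(x)\ls e^{-\Lambda_{\mu}^{\ast}(x)}$ that drives Theorems~\ref{th:intro-1} and~\ref{th:intro-2}, and then to integrate against $\mu$. Since $\int_{\mathbb{R}^n}\varphi_{\mu}\,d\mu$ is affinely invariant (Tukey depth is equivariant under invertible affine maps), we may assume $\mu$ is isotropic. For calibration, when $\mu=\gamma_n$ is the standard Gaussian one has $\varphi_{\mu}(x)=\Phi(-\norm{x}_2)$ and a direct computation yields $\int\varphi_{\mu}\,d\mu\asymp e^{-n/2}/\sqrt n$, so the rate $e^{-cn}$ is essentially sharp and the Gaussian is the extremiser.

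The first step will be the matching pointwise estimate
\[
\varphi_{\mu}(x)\gr \frac{c_0}{1+\norm{x}_2}\,e^{-\Lambda_{\mu}^{\ast}(x)}\qquad(x\in \mathbb{R}^n),
\]
with $c_0>0$ an absolute constant. The one-dimensional input is the companion to Cram\'er--Chernoff: for every log-concave probability measure $\rho$ on $\mathbb R$ with mean $0$ and every $t>0$,
\[
\rho([t,\infty))\gr \frac{c_0}{1+t}\,e^{-\Lambda_{\rho}^{\ast}(t)},
\]
which follows by exponential tilting: pick $\lambda_t$ so that $d\rho_t\propto e^{\lambda_t y}\,d\rho$ (still log-concave) has mean $t$, invoke a Gr\"unbaum-type one-dimensional lemma to secure at least $1/e$ of its mass in the window $[t,t+1/\lambda_t]$, and undo the tilt at the cost of $e^{-\Lambda_{\rho}^{\ast}(t)}/(1+t)$ (up to an absolute constant). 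Applying this to each marginal $\pi_{\theta}\mu$ (which, since $\mu$ is isotropic, is log-concave with mean $0$ and variance $1$), taking the infimum over $\theta\in S^{n-1}$, and using the identity $\Lambda_{\mu}^{\ast}(x)=\sup_{\theta\in S^{n-1}}\Lambda_{\pi_{\theta}\mu}^{\ast}(\langle x,\theta\rangle)$ produces the displayed bound, with the factor $1/(1+\norm{x}_2)$ coming from $|\langle x,\theta\rangle|\ls \norm{x}_2$.

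Integrating over the set $A=\set{\norm{x}_2\ls 2\sqrt n}\cap B_{Cn}(\mu)$, where $B_t(\mu)=\set{\Lambda_{\mu}^{\ast}\ls t}$ are the sets of Section~\ref{section-3}, yields
\[
\int_{\mathbb{R}^n}\varphi_{\mu}\,d\mu\gr \frac{c_0}{3\sqrt n}\,e^{-Cn}\,\mu(A)\gr e^{-c_1 n},
\]
provided $\mu(A)$ is bounded below by an absolute constant. Paouris' concentration inequality for isotropic log-concave $\mu$ gives $\mu(\set{\norm{x}_2\ls 2\sqrt n})\gr 1/2$, while $\mu(B_{Cn}(\mu))\gr 3/4$ for an appropriate absolute $C$ reduces via Markov's inequality to the moment estimate $\int\Lambda_{\mu}^{\ast}\,d\mu=O(n)$. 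The main obstacle will be the matching one-dimensional large-deviation lower bound: the prefactor $1/(1+t)$ is genuinely necessary (the Gaussian saturates it), but it is harmless because after integration it contributes only a factor $\asymp 1/\sqrt n$, absorbed into a slightly larger absolute constant in the exponent. A secondary delicate point will be proving $\mu(B_{Cn})\gr 3/4$ uniformly in isotropic log-concave $\mu$ with an absolute $C$: since $\Lambda_{\mu}^{\ast}$ can blow up near the boundary of the support of $\mu$, one really has to exploit log-concavity through Paouris-type moment estimates for the Cram\'er transform, analogous to the moment bounds for $h_{Z_q(\mu)}$ underpinning the upper-bound proof in Section~\ref{section-3}.
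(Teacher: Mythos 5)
Your route is genuinely different from the paper's, but it has a fatal gap at the step you describe as ``secondary'': the claim that $\mu(B_{Cn}(\mu))\gr 3/4$ for some absolute constant $C$, equivalently that $\int\Lambda_{\mu}^{\ast}\,d\mu=O(n)$, is false for general isotropic log-concave $\mu$. Take $\mu$ to be the uniform measure on the centered Euclidean ball $RB_2^n$ with $R=\sqrt{n+2}$. A $\mu$-typical point satisfies $1-|x|^2/R^2\approx c/n$, and there $\Lambda_{\mu}^{\ast}(x)$ is of order $n\ln n$; this is precisely the computation behind the sharp threshold $\exp\big((1\pm\epsilon)\tfrac12 n\ln n\big)$ for the ball recalled at the end of the Introduction, and it means $\mathbb{E}_{\mu}(\Lambda_{\mu}^{\ast})\approx \tfrac12\, n\ln n$, so that $\mu(B_{Cn}(\mu))$ is exponentially small in $n$ for every fixed $C$. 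More to the point, no set $A$ of measure bounded below by a constant with $\inf_{A}\varphi_{\mu}\gr e^{-Cn}$ exists for the ball: there $\int\varphi_{\mu}\,d\mu$ equals $2^{-n}$ up to polynomial factors, and the integral is dominated by the region $\{|x|\ls R/\sqrt{2}\}$, whose measure is $2^{-n/2}$. This is exactly the obstruction the paper's argument is built around: Lemma~\ref{lem:measure-1} gives $\varphi_{\mu}\gr C_1^{-n}$ only on the set $\tfrac12 Z_{5n}^{+}(\mu)$, and Lemmas~\ref{lem:measure-4} and~\ref{lem:measure-5} together with \eqref{eq:volumeKn+1} show that this set has $\mu$-measure at least $e^{-cn}$ --- exponentially small, but sufficient, since the two exponentials simply add. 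Any repair of your argument must likewise work with a set of exponentially small measure and control both exponents simultaneously.

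There is also a gap in the one-dimensional lemma. Gr\"{u}nbaum's lemma gives the tilted measure $\rho_{\lambda_t}$ mass $1/e$ on the half-line $[t,\infty)$, not on the window $[t,t+1/\lambda_t]$; the standard deviation of $\rho_{\lambda_t}$ is $\sqrt{\Lambda_{\rho}''(\lambda_t)}$, which for the centered exponential is of order $1+t$ while $1/\lambda_t$ stays bounded, and for the Gaussian is of order $1$ while $1/\lambda_t\approx 1/t$, so in both cases the window captures only about a $1/t$ fraction of the tilted mass. Indeed, if every step you list were correct you would obtain $\rho([t,\infty))\gr c\,e^{-\Lambda_{\rho}^{\ast}(t)}$ with no polynomial loss at all, which the Gaussian refutes. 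The inequality you want (after adding the variance normalization that the statement needs, since as written it is not scale-invariant) is plausible and the $1/(1+t)$ loss would indeed be harmless after integration, but it requires a genuine proof; and even granting it, the first gap remains.
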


The proof of Theorem~\ref{th:intro-3} makes use of several facts about isotropic
log-concave probability measures. In the case where $\mu$ is the uniform measure on a convex body
$K$ of volume $1$ in ${\mathbb R}^n$, one can show that $\varphi_{\mu}(x)\gr e^{-c_1n}$ for all
$x\in\tfrac{1}{2}K$ and then simply apply Markov's inequality and use the fact that $\left|\tfrac{1}{2}K\right|=2^{-n}$.
When $\mu$ is an arbitrary log-concave probability measure on ${\mathbb R}^n$, in order to obtain
the same exponential in the dimension lower bound we have to exploit the family of the one-sided
$L_t$-centroid bodies of $\mu$. More precisely, we use the fact that in order to have the lower bound
$\varphi_{\mu}(x)\gr e^{-c_1n}$ we may use, instead of $\tfrac{1}{2}K$, the convex body $\tfrac{1}{2}Z_t^+(\mu)$ with e.g. $t=5n$, where $Z_t^+(\mu)$ is the one-sided $L_t$-centroid body of $\mu$, and we establish an appropriate lower bound for $\mu\left(\tfrac{1}{2}Z_{5n}^+(\mu)\right)$. This last estimate requires the use of some other families of convex sets that are
associated with a log-concave probability measure; these are introduced in the next section as well as in
Section~\ref{section-4}. For the reader's convenience we present first the proof of Theorem~\ref{th:intro-3} in the simpler
case where $\mu $ is the uniform measure on a convex body $K$ in ${\mathbb R}^n$ and then in the general case of an
arbitrary log-concave probability measure.

\smallskip

In the second part of this article we consider the question to obtain uniform upper and lower thresholds
for the expected measure of a random polytope defined as the convex hull of independent random points with a log-concave distribution. The general formulation of the problem is the following. Given a log-concave probability measure $\mu $ on ${\mathbb R}^n$
we consider independent random points $X_1,X_2,\ldots $
in ${\mathbb R}^n$ distributed according to $\mu $ and for any $N>n$ we consider the random polytope
$$K_N={\rm conv}\{X_1,\ldots ,X_N\}$$
and the expectation ${\mathbb E}_{\mu^N}[\mu (K_N)]$. Tukey's half-space depth plays a crucial role in the study of these
random polytopes and of their threshold behavior, starting with the classical work of Dyer, F\"{u}redi and McDiarmid who established in \cite{DFM} a sharp threshold for the expected volume of random polytopes with vertices uniformly distributed in the discrete cube $E_2^n=\{-1,1\}^n$ or in the solid cube $B_{\infty }^n=[-1,1]^n$. They proved that in the first case, if $\kappa =\ln 2-\tfrac{1}{2}$ then for every $\varepsilon \in (0,\kappa )$ one has the upper threshold
\begin{equation}\label{eq:up-k}\lim_{n\rightarrow\infty }\sup\left\{2^{-n} {\mathbb E}|K_N|\colon N\ls \exp((\kappa -\varepsilon
)n)\right\}=0\end{equation} and the lower threshold \begin{equation}\label{eq:low-k}\lim_{n\rightarrow\infty
}\inf\left \{ 2^{-n} {\mathbb E}|K_N|\colon N\gr \exp((\kappa +\varepsilon
)n)\right\}=1.\end{equation}
A similar result holds true for the expected volume of random polytopes with vertices uniformly distributed in
the cube $B_{\infty }^n$; the corresponding value of the
constant $\kappa $ is $\kappa =\ln(2\pi)-\gamma -\tfrac{1}{2}$, where $\gamma $ is Euler's constant. Half-space depth
plays a key role in the proof of these results: the starting point for the proof of the upper and lower threshold
are variants of Lemma~\ref{lem:upper-1} and Lemma~\ref{lem:inclusion} respectively. Further sharp thresholds
(meaning that there exists some constant $\kappa =\kappa_{\mu}$ such that the expected volume of $K_N$ changes behavior
around $N=\exp(\kappa_{\mu}n)$) have been given in a number of other special cases; see \cite{Gatzouras-Giannopoulos-2009}
for the case where $X_i$ have independent identically distributed coordinates supported on a bounded interval, and the articles \cite{Pivovarov-2007} and \cite{Bonnet-Chasapis-Grote-Temesvari-Turchi-2019}, \cite{Bonnet-Kabluchko-Turchi-2021} for a number of cases where $X_i$ have rotationally invariant densities. All these works follow the same strategy and use estimates for the half-space depth.
Non-sharp, both of them exponential in the dimension, upper and lower thresholds are obtained in \cite{Frieze-Pegden-Tkocz-2020} for the case where $X_i$ are uniformly distributed in a simplex. All these results suggest that, at least in the case where $\mu=\mu_K$ is the uniform measure on a high-dimensional convex body, the expectation ${\mathbb E}_{\mu^N}[\mu (K_N)]$
of the measure of $K_N$ exhibits a threshold with constant $\kappa_{\mu} =\frac{1}{n}{\mathbb E}_{\mu }(\Lambda_{\mu}^{\ast })$,
where $\Lambda_{\mu}^{\ast}$ is the Cram\'{e}r transform of $\mu$, in the sense that the following statement
might be true: given $\delta\in \left(0,\tfrac{1}{2}\right)$, there exists
$n_0(\delta,\varepsilon )\in {\mathbb N}$ such that if $n\gr n_0$ and $K$ is a convex body in ${\mathbb R}^n$ then
$$\sup\left\{{\mathbb E}_{\mu^N}[\mu (K_N)]\colon N\ls \exp((\kappa_{\mu}-\varepsilon) n)\right\}\ls\delta $$
and
$$\inf\left \{{\mathbb E}_{\mu^N}[\mu (K_N)]\colon N\gr \exp((\kappa_{\mu}+\varepsilon) n)\right\}\gr 1-\delta $$
for some $\varepsilon =c(n,\delta)\kappa_{\mu}$ with $\lim\limits_{n\to\infty}c(n,\delta)=0$. Some steps in this direction have been made in \cite{BGP-threshold}. Note that by \eqref{eq:stronger} and Jensen's inequality one has that $\kappa_{\mu}\gr c/L_n^2$ for every log-concave probability measure $\mu $ on ${\mathbb R}^n$.

Here, we are interested in uniform upper and lower thresholds for the class of all log-concave probability
measures. The question that we study is to find a constant $N_1(n)$, depending only on $n$ and as large as possible, so that
$$\sup_{\mu }\Big(\sup\Big\{{\mathbb E}_{\mu^N}[\mu (K_N)]:N\ls N_1(n)\Big\}\Big)\longrightarrow 0$$ as $n\to\infty $
and a second constant $N_2(n)$, depending only on $n$ and as small as possible,
so that
$$\inf_{\mu}\Big(\inf\Big\{{\mathbb E}_{\mu^N}[\mu (K_N)]:N\gr N_2(n)\Big\}\Big)\longrightarrow 0$$ as $n\to\infty $,
where the supremum and the infimum are over all log-concave probability measures. We shall call the first type of result a ``uniform upper threshold" and the second type a ``uniform lower threshold".

Such uniform upper and lower thresholds were obtained recently by Chakraborti, Tkocz and Vritsiou in \cite{Chakraborti-Tkocz-Vritsiou-2021} for some families of distributions. They showed that if $\mu $ is an even log-concave
probability measure supported on a convex body $K$ in ${\mathbb R}^n$ and if $X_1,X_2,\ldots $ are independent random points distributed according to $\mu $, then for any $n<N\ls \exp (c_1n/L_{\mu }^2)$ we have that
\begin{equation}\label{eq:tk-1}\frac{{\mathbb E}_{\mu^N}(|K_N|)}{|K|} \ls \exp\left(-c_2n/L_{\mu }^2\right),\end{equation}
where $c_1,c_2>0$ are absolute constants. We obtain an upper threshold for a pair of log-concave measures $\mu $ and $\nu $,
if they can be simultaneously put in the isotropic position.

\begin{theorem}\label{th:intro-4}
Let $\mu$ and $\nu $ be isotropic log-concave probability measures on ${\mathbb R}^n$. Let $X_1,X_2,\ldots $ be independent
random points in ${\mathbb R}^n$ distributed according to $\mu $ and for any $N>n$ consider the random polytope
$K_N={\rm conv}\{X_1,\ldots ,X_N\}$. Then, for any $N\ls \exp (c_1n/L_{\nu }^2)$ we have that
$${\mathbb E}_{\mu^N}(\nu (K_N)) \ls 2\exp\left(-c_2n/L_{\nu }^2\right),$$
where $c_1,c_2>0$ are absolute constants.
\end{theorem}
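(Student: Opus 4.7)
The plan is to deduce Theorem~\ref{th:intro-4} as a fairly direct consequence of the strengthened estimate \eqref{eq:stronger} established en route to Theorem~\ref{th:intro-2}, combined with the classical pointwise separation bound ${\mathbb P}_{\mu^N}(x\in K_N)\ls\min\{1,N\varphi_{\mu}(x)\}$ (the variant of Lemma~\ref{lem:upper-1} alluded to in the introduction). First I would recall the one-line proof of this separation bound: if $x\notin K_N$ then some half-space containing $x$ must contain none of the $X_i$, so applying this to an (almost) minimizer of $\mu(H)$ over $H\ni x$ gives ${\mathbb P}_{\mu^N}(x\notin K_N)\gr (1-\varphi_{\mu}(x))^N$, hence ${\mathbb P}_{\mu^N}(x\in K_N)\ls \min\{1,N\varphi_{\mu}(x)\}$. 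Combining this with $\varphi_{\mu}(x)\ls e^{-\Lambda_{\mu}^{\ast}(x)}$ and integrating with respect to $\nu$ via Fubini will yield
$${\mathbb E}_{\mu^N}(\nu(K_N))=\int_{{\mathbb R}^n}{\mathbb P}_{\mu^N}(x\in K_N)\,d\nu(x)\ls \int_{{\mathbb R}^n}\min\{1,Ne^{-\Lambda_{\mu}^{\ast}(x)}\}\,d\nu(x).$$

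Next, for an auxiliary parameter $t>0$ to be chosen, I would split the integral through the sublevel set $B_t(\mu)=\{x\in{\mathbb R}^n:\Lambda_{\mu}^{\ast}(x)\ls t\}$, writing
$$\int_{{\mathbb R}^n}\min\{1,Ne^{-\Lambda_{\mu}^{\ast}(x)}\}\,d\nu(x)\ls \nu(B_t(\mu))+N\int_{{\mathbb R}^n}e^{-\Lambda_{\mu}^{\ast}(x)}\,d\nu(x).$$
The second summand is bounded immediately by $N\cdot \exp(-cn/L_{\nu}^2)$ thanks to \eqref{eq:stronger}. For the first summand the natural move is Markov's inequality applied to the random variable $e^{-\Lambda_{\mu}^{\ast}}$:
$$\nu(B_t(\mu))=\nu\bigl(\{e^{-\Lambda_{\mu}^{\ast}(x)}\gr e^{-t}\}\bigr)\ls e^t\int_{{\mathbb R}^n}e^{-\Lambda_{\mu}^{\ast}(x)}\,d\nu(x)\ls e^t\cdot \exp(-cn/L_{\nu}^2),$$
where I use \eqref{eq:stronger} once more.

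To balance the two bounds I would take $t:=cn/(2L_{\nu}^2)$ and restrict to $N\ls \exp(cn/(2L_{\nu}^2))$, which fixes the threshold constant $c_1=c/2$. Each of the two summands is then at most $\exp(-cn/(2L_{\nu}^2))$, so summing gives ${\mathbb E}_{\mu^N}(\nu(K_N))\ls 2\exp(-c_2n/L_{\nu}^2)$ with $c_2=c/2$, as claimed.

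I do not foresee a genuine obstacle: Theorem~\ref{th:intro-4} is essentially packaged into \eqref{eq:stronger}, and the remaining ingredients — the separation bound on ${\mathbb P}_{\mu^N}(x\in K_N)$ and a single application of Markov's inequality — are entirely standard. The only care needed is in the bookkeeping that trades a factor of $e^t$ (in $\nu(B_t(\mu))$) against a factor of $N$ (in the tail integral), which forces the balance $\log N\approx t\approx cn/(2L_{\nu}^2)$.
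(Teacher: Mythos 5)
Your argument is correct and essentially deduces the theorem as a corollary of \eqref{eq:stronger}, whereas the paper proves it afresh: the paper's Theorem~\ref{th:upper-threshold} invokes Lemma~\ref{lem:upper-1} to get ${\mathbb E}_{\mu^N}(\nu(K_N)) \ls \nu(B_t(\mu)) + N e^{-t}$ and then re-runs the volume estimate $|B_t(\mu)|^{1/n} \ls c\sqrt{t/n}$ (from Proposition~\ref{prop:fact-4} and Theorem~\ref{th:fact-1}) with a single fixed $t\approx n/L_\nu^2$, while you reuse the already-integrated estimate $\int e^{-\Lambda_\mu^\ast}\,d\nu\ls\exp(-cn/L_\nu^2)$ from Section~\ref{section-3}. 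Both proofs ultimately rest on the same pointwise union bound $\mu^N(x\in K_N)\ls N\varphi_\mu(x)$ and the same volume control on $B_t(\mu)$; your version is more modular and makes explicit that Theorem~\ref{th:intro-4} is a direct consequence of the strengthened half-space-depth estimate.

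Two minor remarks. First, the split through $B_t(\mu)$ in your argument is superfluous: since $\min\{1,Ne^{-\Lambda_\mu^\ast}\}\ls Ne^{-\Lambda_\mu^\ast}$ everywhere, you can go straight to ${\mathbb E}_{\mu^N}(\nu(K_N))\ls N\int e^{-\Lambda_\mu^\ast}\,d\nu\ls N\exp(-cn/L_\nu^2)$, and then $N\ls\exp(cn/2L_\nu^2)$ already gives the claim without the Markov step or the factor of $2$. Second, \eqref{eq:stronger} as established in the proof of Theorem~\ref{th:intro-2} carries the hypothesis $n\gr n_0$, so your deduction inherits that restriction; this matches the paper's own Theorem~\ref{th:upper-threshold}, where the condition $n\gr n_0$ is stated explicitly even though it is omitted from the introduction's formulation.
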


As a corollary of Theorem~\ref{th:intro-4} we get:

\begin{corollary}\label{cor_intro-1}There exists an absolute constant $c>0$ such that if $N_1(n)=\exp (cn/L_n^2)$ then
$$\sup_{\mu }\Big(\sup\Big\{{\mathbb E}_{\mu^N}[\mu (K_N)]:N\ls N_1(n)\Big\}\Big)\longrightarrow 0$$
as $n\to\infty $, where the first supremum is over all log-concave probability measures $\mu $ on ${\mathbb R}^n$.
\end{corollary}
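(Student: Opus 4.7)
The plan is to derive Corollary~\ref{cor_intro-1} as an immediate consequence of Theorem~\ref{th:intro-4} by choosing $\nu=\mu$, after a standard normalization step and an appeal to Klartag's bound on $L_n$.

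First I would observe that the quantity ${\mathbb E}_{\mu^N}[\mu (K_N)]$ is invariant under invertible affine transformations: if $T:{\mathbb R}^n\to {\mathbb R}^n$ is an invertible affine map and $\tilde{\mu}=T_{\ast}\mu$ denotes the push-forward of $\mu$ under $T$, then $\tilde{X}_i:=TX_i$ are i.i.d.\ with law $\tilde{\mu}$, the polytope $\tilde{K}_N={\rm conv}\{\tilde{X}_1,\ldots,\tilde{X}_N\}$ equals $T(K_N)$, and $\tilde{\mu}(\tilde{K}_N)=\mu(T^{-1}(T(K_N)))=\mu(K_N)$ by the defining property of the push-forward. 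Since every non-degenerate log-concave probability measure can be put in isotropic position by an affine transformation, it suffices to prove the bound for isotropic $\mu$.

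Next, I would apply Theorem~\ref{th:intro-4} with $\nu=\mu$ (both isotropic). For every $N\ls \exp(c_1n/L_{\mu}^2)$ this gives
$${\mathbb E}_{\mu^N}[\mu (K_N)] \ls 2\exp\!\left(-c_2n/L_{\mu}^2\right).$$
By the definition of $L_n$ we have $L_{\mu}\ls L_n$, so $1/L_{\mu}^2\gr 1/L_n^2$. Setting $N_1(n):=\exp(c_1n/L_n^2)$ we have $N_1(n)\ls \exp(c_1n/L_{\mu}^2)$ for every isotropic log-concave $\mu$ on ${\mathbb R}^n$, so the hypothesis of Theorem~\ref{th:intro-4} is satisfied whenever $N\ls N_1(n)$, and the right-hand side is at most $2\exp(-c_2n/L_n^2)$.

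Finally, the supremum over $\mu$ and over $N\ls N_1(n)$ satisfies
$$\sup_{\mu}\Big(\sup\Big\{{\mathbb E}_{\mu^N}[\mu (K_N)]:N\ls N_1(n)\Big\}\Big)\ls 2\exp\!\left(-c_2n/L_n^2\right).$$
Invoking Klartag's bound $L_n\ls C\sqrt{\ln n}$ mentioned after Theorem~\ref{th:intro-1}, we obtain $n/L_n^2\gr n/(C^2\ln n)\to\infty$, so the right-hand side tends to $0$ as $n\to\infty$. There is no genuine obstacle here; the work is entirely contained in Theorem~\ref{th:intro-4}, and the only care needed is the preliminary reduction to the isotropic case via affine invariance, together with the passage from the (unknown) individual constant $L_{\mu}$ to the (controllable) global constant $L_n$.
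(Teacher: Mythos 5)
Your proof is correct and is exactly the argument the paper intends (the paper states Corollary~\ref{cor_intro-1} as an immediate consequence of Theorem~\ref{th:intro-4} without writing it out): reduce to the isotropic case by affine invariance, apply Theorem~\ref{th:intro-4} with $\nu=\mu$, replace the measure-dependent $L_{\mu}$ by the uniform bound $L_n$, and conclude via Klartag's estimate $L_n\ls C\sqrt{\ln n}$.
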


The proof of Theorem~\ref{th:intro-4} exploits some of the ideas that are used for the proof of \eqref{eq:tk-1}
in \cite{Chakraborti-Tkocz-Vritsiou-2021}: Lemma~\ref{lem:upper-1} is a variant of
a known idea which is often used for upper thresholds and is based again on the inequality
$\varphi_{\mu}(x)\ls\exp(-\Lambda_{\mu}^{\ast}(x))$. Then, one has to use upper bounds
for the volume of the sets $B_t(\mu)$. The assumption that both $\mu $ and $\nu $ are isotropic may be replaced
by different types of normalization. We discuss other versions of Theorem~\ref{th:intro-4} in Section~\ref{section-5}
and we show that one can recover \eqref{eq:tk-1} from these.

The uniform lower threshold which is established in \cite{Chakraborti-Tkocz-Vritsiou-2021} concerns the case
where $\mu $ is an even $\kappa $-concave measure on ${\mathbb R}^n$ with $0<\kappa <1/n$, supported on a convex body $K$ in ${\mathbb R}^n$. If $X_1,X_2,\ldots $ are independent random points in ${\mathbb R}^n$ distributed according to $\mu $ and $K_N={\rm conv}\{X_1,\ldots ,X_N\}$ as before, then for any $M\gr C$ and any
$N\gr \exp\left(\frac{1}{\kappa }(\ln n+2\ln M)\right)$ we have that
\begin{equation}\label{eq:tk-2}\frac{{\mathbb E}_{\mu^N}(|K_N|)}{|K|}\gr 1-\frac{1}{M},\end{equation}
where $C>0$ is an absolute constant.

Since the family of log-concave probability measures corresponds to the case $\kappa =0$, it is natural to ask for analogues of this result for $0$-concave, i.e. log-concave, probability measures. We obtain a
uniform lower threshold for the class of log-concave probability measures.

\begin{theorem}\label{th:intro-5}Let $\delta\in (0,1)$. Then,
$$\inf_{\mu }\Big(\inf\Big\{ {\mathbb E}_{\mu^N}\big[\mu ((1+\delta )K_N)\big]: N\gr \exp \big (C\delta^{-1}\ln \left(2/\delta \right)n\ln n\big )\Big\}\Big)\longrightarrow 1$$ as $n\to\infty $, where the first infimum is over all log-concave probability measures $\mu $ on ${\mathbb R}^n$ with barycenter at the origin, and $C>0$ is an absolute constant.
\end{theorem}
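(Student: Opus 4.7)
Since $\mathbb{E}_{\mu^N}[\mu((1+\delta)K_N)]$ is invariant under invertible linear transformations applied simultaneously to $\mu$ and to the sample $X_1,\ldots,X_N$, and since $\mu$ has barycenter at the origin by hypothesis, I may assume $\mu$ is in isotropic position. The plan is then to exhibit a deterministic convex body $A \subseteq \mathbb{R}^n$ such that (i) $\mu(A) \geq 1-o(1)$ as $n\to\infty$, uniformly over all log-concave $\mu$, and (ii) $(1+\delta)K_N \supseteq A$ with $\mu^N$-probability $1-o(1)$ whenever $N \geq \exp\!\bigl(C\delta^{-1}\ln(2/\delta)\,n\ln n\bigr)$.

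The natural candidate is the Tukey depth region $A = D_p(\mu) := \{x \in \mathbb{R}^n : \varphi_\mu(x) \geq p\}$ at a level $p=p(n,\delta)$ to be tuned. By construction $D_p$ is convex (an intersection of closed half-spaces) and contains the origin (since $\varphi_\mu(0) \geq 1/e$ by Gr\"unbaum's inequality). Its key feature is the following: for every $\theta \in S^{n-1}$, if $x^\ast_\theta \in D_p$ attains $\langle x^\ast_\theta,\theta\rangle = h_{D_p}(\theta)$, then the tangent half-space $H_\theta = \{y : \langle y,\theta\rangle \geq h_{D_p}(\theta)\}$ contains $x^\ast_\theta$, so $\mu(H_\theta) \geq \varphi_\mu(x^\ast_\theta) \geq p$. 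This uniform lower bound on the tangent half-space mass in every direction is precisely the hypothesis required by a Dyer--F\"uredi--McDiarmid-type inclusion lemma.

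For the inclusion $(1+\delta)K_N \supseteq D_p$, fix an $\varepsilon$-net $\mathcal N \subseteq S^{n-1}$ with $\varepsilon$ of order $\delta\,r_{D_p}/R$, where $r_{D_p}$ is an absolute-constant lower bound on the inradius of $D_p$ (from Gr\"unbaum near the origin together with isotropy) and $R$ is a with-high-probability upper bound on the circumradii of $D_p$ and $K_N$ (of order $\ln(1/p)$: for $D_p$ via the one-dimensional Borell tail bound $\mu(\langle X,\theta\rangle \geq t) \leq e^{-ct}$ for isotropic log-concave $\mu$, and for $\max_i |X_i|$ via Paouris' deviation inequality together with a union bound). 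Then $|\mathcal N| \leq \exp\!\bigl(C(n\ln n + n\ln(1/\delta))\bigr)$ for the relevant range of $p$, and a standard support-function approximation shows that on the event $\{\max_i \langle X_i,\theta\rangle \geq h_{D_p}(\theta)\text{ for every }\theta \in \mathcal N\}$ one has $(1+\delta)K_N \supseteq D_p$. For each fixed $\theta$ this event fails with probability $(1-p_\theta)^N \leq e^{-Np}$, so the union bound yields failure probability $\exp\!\bigl(C(n\ln n + n\ln(1/\delta)) - Np\bigr)$, which is $o(1)$ as soon as $Np \geq 2C(n\ln n + n\ln(1/\delta))$.

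The hard part is to show that one can choose $p$ as large as $\exp\!\bigl(-c\delta^{-1}\ln(2/\delta)\,n\ln n\bigr)$ while still having $\mu(D_p) \geq 1-\delta$, uniformly over all log-concave $\mu$. The extremal case is essentially the uniform measure on an isotropic convex body $K$, where one expects $D_p$ to be, up to lower-order corrections, a dilate $\lambda_p K$ with $\lambda_p \to 1$ as $p \to 0$, and the volume constraint $\mu(\lambda_p K) = \lambda_p^n \geq 1-\delta$ forces $1-\lambda_p \lesssim \delta/n$. Translating this back through the half-space depth via the one-sided $L_t$-centroid body $Z_t^+(\mu)$ --- for which Paley--Zygmund-type reverse H\"older inequalities for log-concave densities yield an inclusion of the form $(1-\beta)Z_t^+(\mu) \subseteq D_{c\beta^t}$ --- and optimizing $t$ against $\beta \sim \delta/n$ produces the claimed bound on $p$: the $\ln n$ factor in the exponent arises from $\lambda_p = 1 - O(\delta/n)$ being driven by the $n$-th power, while the $\delta^{-1}\ln(2/\delta)$ factor comes from the optimization of the moment order $t$. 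Once the estimate $\mu(D_p) \geq 1-\delta$ is established, combining with the inclusion argument yields $\mathbb{E}_{\mu^N}[\mu((1+\delta)K_N)] \geq \mu(D_p)\,(1-o(1)) \to 1$ uniformly in $\mu$, which is the desired uniform lower threshold.
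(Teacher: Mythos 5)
Your overall architecture is the standard Dyer--F\"uredi--McDiarmid one, and it matches the paper's: reduce to isotropic $\mu$, find a deterministic convex body $A$ with $\mu(A)$ close to $1$ on which the depth is uniformly at least some $p$, and combine a union bound over an $\varepsilon$-net (the paper packages this as Lemma~\ref{lem:inclusion} plus Proposition~\ref{prop:muZ}) with $Np\gg n\ln N$. Your net/inclusion step, including the circumradius bounds of order $\ln(1/p)$ for $D_p$ and for $\max_i|X_i|$, is sound. The gap is exactly where you say the hard part is: the estimate $\mu(D_p)\gr 1-o(1)$ for $p=\exp\big(-c\delta^{-1}\ln(2/\delta)\,n\ln n\big)$ is asserted via a heuristic but not proved, and the route you sketch for it does not work at that scale. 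You propose $(1-\beta)Z_t^+(\mu)\subseteq D_p$ (which is Lemma~\ref{lem:measure-1}, giving $p\approx (1-(1-\beta)^t)^2C_1^{-t}$, not $c\beta^t$) and then need $\mu\big((1-\beta)Z_t^+(\mu)\big)\gr 1-o(1)$. That is a statement about the measure of a \emph{shrunken} centroid body, and it is strictly stronger than what the paper proves. Proposition~\ref{prop:muZ} controls only $\mu\big((1+\delta)Z_t^+(\mu)\big)$, and the $(1+\epsilon)$-dilation there is not cosmetic: it is what makes Markov's inequality give the per-direction tail $\mu\big(\langle x,\xi\rangle_+\gr (1+\epsilon)h_{Z_t^+(\mu)}(\xi)\big)\ls (1+\epsilon)^{-t}$. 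Without the dilation the per-direction tail $\mu\big(\langle x,\xi\rangle_+> (1-\beta)h_{Z_t^+(\mu)}(\xi)\big)$ can be as large as order $\beta+\tfrac{\ln t}{t}$ (e.g.\ for a marginal uniform on an interval), so the union bound over the net of cardinality $(Ct/\epsilon)^n$ collapses. Moreover, even granting a global (non-union-bound) argument, making $(1-\beta)Z_t^+(\mu)$ capture $1-o(1)$ of the mass forces $t$ far beyond $\delta^{-1}\ln(2/\delta)n\ln n$ for cone-like marginals (where $h_{Z_t^+(\mu)}(\xi)$ approaches the support function of the support only at rate $\approx n\ln(t/n)/t$, so one needs $t\gtrsim n^2/\delta$ to get within $\beta\sim\delta/n$); this would degrade $p\approx C_1^{-t}$ and hence the threshold to $\exp(cn^2/\delta)$ or worse.

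The paper sidesteps this by spending the $(1+\delta)$-enlargement differently. It proves $K_N\supseteq (1-\epsilon)Z_t^+(\mu)$ with high probability (Lemma~\ref{lem:measure-1} plus Lemma~\ref{lem:inclusion}) and then only needs $\mu\big((1+\epsilon)Z_t^+(\mu)\big)\gr 1-e^{-c\epsilon t}$ (Proposition~\ref{prop:muZ}); the passage from $(1-\epsilon)Z_t^+(\mu)$ to $(1+\epsilon)Z_t^+(\mu)$ is absorbed by the factor $(1+\delta)\gr\frac{1+\epsilon}{1-\epsilon}$ applied to $K_N$. In your scheme the $(1+\delta)$ on $K_N$ is consumed by the net approximation, leaving you to bound the measure of the \emph{un-enlarged} region $D_p$, which is the step that fails. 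A secondary point: even if you had $\mu(D_p)\gr 1-\delta$, the theorem requires the expectation to tend to $1$ for fixed $\delta$, so you in fact need $\mu(D_p)\gr 1-o_n(1)$, which your sketch does not address. To repair the argument you would essentially have to reintroduce a dilation on the centroid-body side, at which point you recover the paper's proof.
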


The proof of Theorem~\ref{th:intro-5} exploits the half-space depth as follows. By a known fact, Lemma~\ref{lem:inclusion},
roughly speaking it suffices to have a good lower bound for $\varphi_{\mu}(x)$ on a set $A\subset {\mathbb R}^n$ of measure close to $1$.
We show that if $\mu$ has its barycenter at the origin then, as in the proof of Theorem~\ref{th:intro-3}, the role of $A$ can be played by $(1+\delta)Z_t^+(\mu)$ where, this time, $t\gr C_{\delta }n\ln n$ and $C_{\delta }=C\delta^{-1}\ln\left(2/\delta\right)$. Theorem~\ref{th:intro-5} provides a weak threshold in the sense that we estimate the expectation ${\mathbb E}_{\mu^N}\big(\mu (1+\delta )K_N)$ (for an arbitrarily small but positive value of $\delta )$ while we would like to have a similar result for ${\mathbb E}_{\mu^N}\big[\mu (K_N)]$. One can ``remove the $\delta $-term",
however the dependence on $n$ becomes worse. More precisely, we show in Theorem~\ref{th:non-sharp} that there exists an absolute
constant $C>0$ such that
$$\inf_{\mu }\Big(\inf\Big\{ {\mathbb E}_{\mu^N}\big[\mu (K_N)\big]: N\gr \exp (C(n\ln n)^2u(n))\Big\}\Big)\longrightarrow 1$$
as $n\to\infty $, where the first infimum is over all log-concave probability measures $\mu $ on ${\mathbb R}^n$
and $u(n)$ is any function with $u(n)\to\infty $ as $n\to\infty $.

It should be noted that an exponential in the dimension lower threshold is not possible in full generality. For example, in the case
where $X_i$ are uniformly distributed in the Euclidean ball the sharp threshold for the problem is
$$\exp \left((1\pm \epsilon )\tfrac{1}{2}n\ln n\right),\qquad \epsilon >0.$$
See \cite{DFM0} where a related estimate first appears, and \cite{Pivovarov-2007},\cite{Bonnet-Chasapis-Grote-Temesvari-Turchi-2019} for sharp estimates; one more proof is given in \cite{BGP-threshold}.

\section{Notation and background information}\label{section-2}

In this section we introduce notation and terminology that we use throughout this work, and provide background
information on isotropic convex bodies and log-concave probability measures. We write $\langle\cdot ,\cdot\rangle $
for the standard inner product in ${\mathbb R}^n$ and denote the Euclidean norm by $|\cdot |$. In what follows, $B_2^n$ is the Euclidean unit ball, $S^{n-1}$ is the unit sphere, and $\sigma $ is the rotationally invariant probability measure on $S^{n-1}$. Lebesgue measure in ${\mathbb R}^n$ is denoted by $|\cdot |$. The letters $c, c^{\prime },c_j,c_j^{\prime }$ etc. denote absolute positive constants whose value may change from line to line. Whenever we write $a\approx b$, we mean that there exist absolute constants $c_1,c_2>0$ such that $c_1a\ls b\ls c_2a$. Similarly, if $A, B$ are sets, then $A \approx B$ will state that $c_1A\subseteq B \subseteq c_2 A$ for some absolute constants $c_1,c_2>0$. We refer to Schneider's book \cite{Schneider-book} for basic facts from the Brunn-Minkowski theory and to the book
\cite{AGA-book} for basic facts from asymptotic convex geometry. We also refer to \cite{BGVV-book} for more information on isotropic
convex bodies and log-concave probability measures.

\smallskip

\noindent {\bf 2.1. Convex bodies.} A convex body in ${\mathbb R}^n$ is a compact convex set $K\subset {\mathbb R}^n$ with non-empty interior.
In this work we often consider bounded convex sets $K$ in ${\mathbb R}^n$ with $0\in {\rm int}(K)$; since their closure is a convex body,
we shall call these sets convex bodies too. We say that $K$ is centrally symmetric if $-K=K$ and that $K$ is centered if the barycenter ${\rm bar}(K)=\frac{1}{|K|}\int_Kx\,dx$ of $K$ is at the origin.
We shall use the fact that if $K$ is a centered convex body in ${\mathbb R}^n$ then
\begin{equation}\label{eq:frad-1}\max_{y\in {\mathbb R}^n}|K\cap (y+\xi^{\perp })|_{n-1}\ls e\,|K\cap\xi^{\perp }|_{n-1}\end{equation}
for all $\xi\in S^{n-1}$, where $\xi^{\perp }=\{x\in {\mathbb R}^n:\langle x,\xi\rangle =0\}$ and $|\cdot|_{n-1}$
denotes $(n-1)$-dimensional volume. This is a result of Fradelizi; for a proof see \cite[Proposition~6.1.9]{BGVV-book}.
The radial function $\varrho_K$ of $K$ is defined for all $x\neq 0$ by $\varrho_K(x)=\sup \{\lambda >0:\lambda x\in K\}$ and
the support function of $K$ is given by $h_K(x) = \sup\{\langle x,y\rangle :y\in K\}$ for all $x\in {\mathbb R}^n$.
The polar body $K^{\circ }$ of a convex body $K$ in ${\mathbb R}^n$ with $0\in {\rm int}(K)$ is the convex body
\begin{equation*}
K^{\circ}:=\bigl\{y\in {\mathbb R}^n: \langle x,y\rangle \ls 1\;\hbox{for all}\; x\in K\bigr\}.
\end{equation*}
A convex body $K$ in ${\mathbb R}^n$ is called isotropic if it has volume $1$, it is centered, and its inertia matrix is a multiple of the
identity matrix: there exists a constant $L_K>0$, the isotropic constant of $K$, such that
\begin{equation*}\|\langle \cdot ,\xi\rangle\|_{L_2(K)}^2:=\int_K\langle x,\xi\rangle^2dx =L_K^2\end{equation*}
for all $\xi\in S^{n-1}$.

\smallskip

\noindent {\bf 2.2. Log-concave probability measures.} In this article, a Borel measure $\mu$ on $\mathbb R^n$ is called log-concave if
$\mu(H)<1$ for every hyperplane $H$ in ${\mathbb R}^n$ and $\mu(\lambda A+(1-\lambda)B) \gr \mu(A)^{\lambda}\mu(B)^{1-\lambda}$ for any compact subsets $A,B$ of ${\mathbb R}^n$ and any $\lambda \in (0,1)$. A theorem of Borell \cite{Borell-1974} shows that under these assumptions, $\mu $ has a log-concave density $f_{{\mu }}$. A function
$f:\mathbb R^n \rightarrow [0,\infty)$ is called log-concave if its support $\{f>0\}$ is a convex set in ${\mathbb R}^n$ and the restriction of $\ln{f}$ to it is concave. If $f$ has finite positive integral then there exist constants $A,B>0$ such that $f(x)\ls Ae^{-B|x|}$ for all $x\in {\mathbb R}^n$ (see \cite[Lemma~2.2.1]{BGVV-book}). In particular, $f$ has finite moments
of all orders. We say that $\mu $ is even if $\mu (-B)=\mu (B)$ for every Borel subset $B$ of ${\mathbb R}^n$ and that $\mu $ is centered if
\begin{equation*}
\int_{\mathbb R^n} \langle x, \xi \rangle d\mu(x) = \int_{\mathbb R^n} \langle x, \xi \rangle f_{\mu}(x) dx = 0
\end{equation*} for all $\xi\in S^{n-1}$. We shall use the fact that if $\mu $ is a centered log-concave
probability measure on ${\mathbb R}^n$ then
\begin{equation}\label{eq:frad-2}\|f_{\mu }\|_{\infty }\ls e^nf_{\mu }(0).\end{equation}
This is a result of Fradelizi; for a proof see \cite[Theorem~2.2.2]{BGVV-book}.
Note that if $K$ is a convex body in $\mathbb R^n$ then the Brunn-Minkowski inequality implies that the indicator function
$\mathbf{1}_{K} $ of $K$ is the density of a log-concave measure, the Lebesgue measure on $K$.

If $\mu $ is a log-concave measure on ${\mathbb R}^n$ with density $f_{\mu}$, we define the isotropic constant of $\mu $ by
\begin{equation}\label{eq:L}
L_{\mu }:=\left (\frac{\sup_{x\in {\mathbb R}^n} f_{\mu} (x)}{\int_{{\mathbb
R}^n}f_{\mu}(x)dx}\right )^{\frac{1}{n}} [\det \textrm{Cov}(\mu)]^{\frac{1}{2n}},\end{equation}
where $\textrm{Cov}(\mu)$ is the covariance matrix of $\mu$ with entries
\begin{equation*}\textrm{Cov}(\mu )_{ij}:=\frac{\int_{{\mathbb R}^n}x_ix_j f_{\mu}
(x)\,dx}{\int_{{\mathbb R}^n} f_{\mu} (x)\,dx}-\frac{\int_{{\mathbb
R}^n}x_i f_{\mu} (x)\,dx}{\int_{{\mathbb R}^n} f_{\mu}
(x)\,dx}\frac{\int_{{\mathbb R}^n}x_j f_{\mu}
(x)\,dx}{\int_{{\mathbb R}^n} f_{\mu} (x)\,dx}.\end{equation*} We say
that a log-concave probability measure $\mu $ on ${\mathbb R}^n$
is isotropic if it is centered and $\textrm{Cov}(\mu )=I_n$,
where $I_n$ is the identity $n\times n$ matrix. In this case, $L_{\mu }=\|f_{\mu }\|_{\infty }^{1/n}$.
For every $\mu $ there exists an affine transformation $T$
such that $T_{\ast }\mu $ is isotropic, where $T_{\ast}\mu $ is the push-forward of $\mu $ defined by $T_{\ast}\mu (A)=\mu (T^{-1}(A))$.
Note that a convex body $K$ of volume $1$ is isotropic if and only if the log-concave probability measure with density $L_K^n{\mathbf 1}_{K / L_K}$ is isotropic. The hyperplane conjecture asks if there exists an absolute constant $C>0$ such that
\begin{equation*}L_n:= \max\{ L_{\mu }:\mu\ \hbox{is an isotropic log-concave probability measure on}\ {\mathbb R}^n\}\ls C\end{equation*}
for all $n\gr 1$. Bourgain \cite{Bourgain-1991}
established  the upper bound $L_n\ls c\sqrt[4]{n}\ln n$; later, Klartag, in \cite{Klartag-2006},
improved this estimate to $L_n\ls c\sqrt[4]{n}$. In a breakthrough work, Chen \cite{C} proved that for any $\epsilon >0$
there exists $n_0(\epsilon )\in {\mathbb N}$ such that $L_n\ls n^{\epsilon}$ for every $n\gr n_0(\epsilon )$. Subsequently,
Klartag and Lehec \cite{Klartag-Lehec-2022} showed that $L_n\ls c(\ln n)^4$, and very recently Klartag \cite{Klartag-2023}
achieved the best known bound $L_n\ls c\sqrt{\ln n}$.

\smallskip

\noindent {\bf 2.3. Centroid bodies.} Let $\mu$ be a log-concave probability measure on $\mathbb R^n$. For any $t\gr 1$ we define the
$L_t$-centroid body $Z_t(\mu)$ of $\mu $ as the centrally symmetric convex body
whose support function is
\begin{equation*} h_{Z_t(\mu)}(y):=\left(
\int_{\mathbb R^n} |\langle x,y\rangle|^t f_{\mu}(x)dx \right)^{1/t},\qquad y\in {\mathbb R}^n.
\end{equation*} Note that $Z_t(\mu )$ is always centrally symmetric,
and $Z_t(T_{\ast}\mu)=T(Z_t(\mu ))$ for every $T\in GL(n)$ and $t\gr 1$.
Note also that a centered log-concave probability measure $\mu $ is isotropic if and only if $Z_2(\mu )=B_2^n$.
The next result of Paouris (see \cite[Theorem~5.1.17]{BGVV-book}) provides upper bounds for the volume
of the $L_t$-centroid bodies of isotropic log-concave probability measures.

\begin{theorem}\label{th:fact-1} If $\mu$ is a centered log-concave probability measure on $\mathbb R^n$, then for every
$2\ls t\ls n$ we have that
\begin{equation*}|Z_t(\mu)|^{1/n}\ls c\sqrt{t/n}[\det {\rm Cov}(\mu)]^{\frac{1}{2n}},\end{equation*}
where $c>0$ is an absolute constant. In particular, if $\mu $ is isotropic then $|Z_t(\mu)|^{1/n}\ls c\sqrt{t/n}$
for all $2\ls t\ls n$.
\end{theorem}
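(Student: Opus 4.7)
The plan has two phases: an affine reduction to the isotropic case, followed by an integrated moment argument in that case.

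\textbf{Reduction.} Since $Z_t(T_{\ast}\mu)=T(Z_t(\mu))$ for every $T\in GL(n)$, applying $T=\textrm{Cov}(\mu)^{-1/2}$ produces an isotropic log-concave measure $\tilde{\mu}$ with $|Z_t(\mu)|=[\det \textrm{Cov}(\mu)]^{1/2}\,|Z_t(\tilde{\mu})|$. Hence it suffices to show that for every isotropic log-concave probability measure $\nu$ on ${\mathbb R}^n$ and every $2\ls t\ls n$, one has $|Z_t(\nu)|^{1/n}\ls c\sqrt{t/n}$; the general claim follows on taking $n$-th roots.

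\textbf{Isotropic case.} The key idea is to average the support function of $Z_t(\nu)$ over the sphere rather than to bound it pointwise. The spherical moment estimate $\int_{S^{n-1}}|\langle x,\theta\rangle|^t\,d\sigma(\theta)\approx (t/n)^{t/2}|x|^t$ (valid for $1\ls t\ls n$), together with Fubini, yields
\begin{equation*}
\int_{S^{n-1}}h_{Z_t(\nu)}(\theta)^t\,d\sigma(\theta)=\int_{{\mathbb R}^n}\int_{S^{n-1}}|\langle x,\theta\rangle|^t\,d\sigma(\theta)\,d\nu(x)\approx\left(\frac{t}{n}\right)^{t/2}\int_{{\mathbb R}^n}|x|^t\,d\nu(x).
\end{equation*}
Taking $t$-th roots and invoking Paouris's $L_t$-moment inequality $\bigl(\int_{{\mathbb R}^n}|x|^t\,d\nu(x)\bigr)^{1/t}\ls c\sqrt{n}$, valid for isotropic log-concave $\nu$ and $t\ls n$, one obtains $\|h_{Z_t(\nu)}\|_{L^t(\sigma)}\ls c\sqrt{t}$. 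By the monotonicity of $L^p$ norms on a probability space, the mean width satisfies $w(Z_t(\nu))=\|h_{Z_t(\nu)}\|_{L^1(\sigma)}\ls c\sqrt{t}$. Finally, Urysohn's inequality $|K|^{1/n}\ls w(K)\,|B_2^n|^{1/n}$, combined with $|B_2^n|^{1/n}\approx 1/\sqrt{n}$, converts this mean-width bound into $|Z_t(\nu)|^{1/n}\ls c\sqrt{t/n}$.

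\textbf{Main obstacle.} The crux is Paouris's $L_t$-moment inequality for the Euclidean norm. This is not a consequence of Borell's lemma, which would yield only a factor linear in $t$ and thereby the weaker volume estimate $|Z_t(\nu)|^{1/n}\ls ct/\sqrt{n}$, losing the crucial $\sqrt{t}$-savings. Recovering that savings is precisely the depth of Paouris's concentration-of-mass theorem, whose proof proceeds via a bootstrap argument over the family of $L_t$-centroid bodies themselves. Any self-contained proof of the stated volume estimate must therefore either invoke the moment bound as a black box or carry out the corresponding bootstrap in parallel.
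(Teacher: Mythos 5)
The paper offers no proof of this statement: it is quoted as a theorem of Paouris, with a reference to \cite[Theorem~5.1.17]{BGVV-book}. Your affine reduction, the Fubini identity $\bigl(\int_{S^{n-1}}h_{Z_t(\nu)}^t\,d\sigma\bigr)^{1/t}\approx\sqrt{t/n}\,I_t(\nu)$ with $I_t(\nu)=\bigl(\int|x|^t\,d\nu\bigr)^{1/t}$, and the Urysohn step are all sound. The genuine gap is the moment inequality you feed into this chain: $I_t(\nu)\ls c\sqrt{n}$ holds for isotropic log-concave $\nu$ only in the range $t\ls c\sqrt{n}$ --- that restriction is precisely the content of Paouris's concentration theorem ($q_*(\nu)\gr c\sqrt{n}$) --- and it fails badly for $\sqrt{n}\ll t\ls n$. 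For the isotropic product of symmetric exponential distributions one has $I_t(\nu)\approx\sqrt{n}+t$, so $I_n(\nu)\approx n$; your argument then yields only $|Z_t(\nu)|^{1/n}\ls c\,t^{3/2}/n$, which at $t=n$ gives $c\sqrt{n}$ instead of the required absolute constant. Since the paper applies the theorem with $t$ as large as $c n/L_\nu^2\gg\sqrt{n}$, proving it only for $t\ls c\sqrt{n}$ does not suffice.

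Moreover, the mean-width route cannot be repaired to cover the top of the range: for the exponential product measure $w(Z_n(\nu))\approx\sqrt{n\ln n}$, so Urysohn's inequality can never produce $|Z_n(\nu)|^{1/n}\ls c$; and the comparison $\|h\|_{L^1(\sigma)}\ls\|h\|_{L^t(\sigma)}$ is lossier still when $t\sim n$, since the $L^t(\sigma)$-norm then essentially records $\max_\theta h_{Z_t(\nu)}(\theta)$, which is of order $n$ for that measure. The logical order in the literature is in fact the reverse of the one you propose: the volume estimate $\|f_\mu\|_\infty^{1/n}|Z_t(\mu)|^{1/n}\ls c\sqrt{t/n}$ is proved first, by a direct volume computation (reducing to convex bodies via Ball's bodies $K_{n+t}(\mu)$ and using the log-concavity of one-dimensional marginals, rather than any spherical averaging), and is then combined with the elementary bound $L_\mu=\|f_\mu\|_\infty^{1/n}[\det{\rm Cov}(\mu)]^{1/2n}\gr c$; Paouris's concentration theorem is a \emph{consequence} of these volume and width estimates, not an available input to them. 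A self-contained proof must follow that route (or an equivalent one) rather than invoke the concentration theorem as a black box.
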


A variant of the $L_t$-centroid bodies of $\mu $ is defined as follows. For every $t\gr 1$ we consider the convex body
$Z_t^+(\mu )$ with support function
\begin{equation*}h_{Z_t^+(\mu )}(y)=\left (\int_{{\mathbb R}^n}\langle
x,y\rangle_+^tf_{\mu }(x)dx\right )^{1/t},\qquad y\in {\mathbb R}^n,\end{equation*} where $a_+=\max
\{a,0\}$. When $f_{\mu }$ is even, we have that $Z_t^+(\mu )=2^{-1/t}Z_t(\mu )$. In any case, we easily verify that
\begin{equation*}Z_t^+(\mu )\subseteq Z_t(\mu ).\end{equation*}
Moreover, if $\mu $ is isotropic then $Z_2^+(\mu )\supseteq cB_2^n$ for an absolute constant $c>0$.
One can also check that if $1\ls t<s$ then
\begin{equation*}\left(\frac{4}{e}\right)^{\frac{1}{t}-\frac{1}{s}}Z_t^+(\mu )\subseteq Z_s^+(\mu )
\subseteq c_1\left(\frac{4(e-1)}{e}\right)^{\frac{1}{t}-\frac{1}{s}}\frac{s}{t}Z_t^+(\mu ).\end{equation*}
The right-hand side inequality gives
\begin{equation}\label{eq:pz-1}\int_{{\mathbb R}^n}\langle x,\xi\rangle_+^{2t}f_{\mu }(x)dx =[h_{Z_{2t}^+(\mu )}(\xi )]^{2t}
\ls C^{2t}[h_{Z_{t}^+(\mu )}(\xi )]^{2t}=C^{2t}\left(\int_{{\mathbb R}^n}\langle x,\xi\rangle_+^tf_{\mu }(x)dx\right)^2,\end{equation}
for all $\xi\in S^{n-1}$, where $C>1$ is an absolute constant. For a proof of all these claims see \cite{Guedon-EMilman-2011}, where
the family of bodies $\tilde{Z}_t^+(\mu)=2^{1/t}Z_t^+(\mu)$ is considered. We have made the necessary adjustments in the inclusions
that we use.

\smallskip

\noindent {\bf 2.4. The bodies $B_t(\mu )$.} Let $\mu $ be a probability measure on ${\mathbb R}^n$.
We define \begin{equation*}M_{\mu}(v):= \int_{{\mathbb R}^n}
e^{\langle v,x\rangle }d\mu(x)=\exp (\Lambda_{\mu
}(v))\end{equation*}
where
\begin{equation*}\Lambda_{\mu }(v)=\ln \left (\int_{{\mathbb R}^n}
e^{\langle v,x\rangle }d\mu(x)\right )\end{equation*}is the logarithmic Laplace
transform of $\mu $. We also define
\begin{equation*}\Lambda_{\mu }^{\ast }(v):= {\cal L}(\Lambda_{\mu})(v)
= \sup_{u\in {\mathbb R}^n} \left\{ \langle v, u\rangle - \ln
\int_{{\mathbb R}^n} e^{\langle u,x\rangle
}d\mu(x)\right\},\end{equation*}
where, given a convex function $g:{\mathbb R}^n\to (-\infty ,\infty ]$, the Legendre transform ${\cal L}(g)$
of $g$ is defined by
\begin{equation*}{\cal L}(g)(x):=\sup_{y\in {\mathbb R}^n}\{ \langle x,y\rangle -
g(y)\}.\end{equation*}The function $\Lambda^{\ast }_{\mu
}$ is called the Cram\'{e}r transform of $\mu $ and plays a crucial
role in the theory of large deviations. For every $t\gr 1$ we define \begin{equation*}M_t(\mu) := \left\{
v\in {\mathbb R}^n : \int_{{\mathbb R}^n} | \langle v, x\rangle
|^td\mu(x)\ls 1\right\}.\end{equation*}Note that
\begin{equation*}Z_t(\mu) := (M_t(\mu))^{\circ}= \left\{
x\in{\mathbb R}^n : | \langle v, x\rangle |^t \ls \int_{{\mathbb
R}^n} | \langle v, y\rangle |^td\mu(y) \;\;\hbox{for all}\;
v\in{\mathbb R}^n\right\}.\end{equation*}For every $t>0$ we also set
\begin{equation*}B_t(\mu):=\{v\in{\mathbb R}^n:\Lambda^{\ast}_{\mu}(v)\ls t\}.\end{equation*}
We say that a measure $\mu $ on ${\mathbb R}^n$ is
$\alpha $-regular if for any $s\gr t\gr 2$
and every $v\in {\mathbb R}^n$,
\begin{equation*}
\left (\int_{{\mathbb R}^n} |\langle v,x\rangle |^sd\mu(x)\right
)^{1/s} \ls \alpha\frac{s}{t}\left (\int_{{\mathbb R}^n} |\langle
v,x\rangle |^td\mu(x)\right )^{1/t}.\end{equation*}
For all $s\gr t$ we have $M_s(\mu)\subseteq M_t(\mu)$
and $Z_t(\mu) \subseteq Z_s(\mu)$. If the measure $\mu$ is
$\alpha$-regular, then $M_t(\mu) \subseteq \alpha\frac{s}{t}
M_s(\mu)$ and $Z_s(\mu) \subseteq \alpha\frac{s}{t}Z_t(\mu)$ for all
$s\gr t\gr 2$. Moreover, for every centered probability measure $\mu $ we have
$\Lambda^{\ast }_{\mu }(0)=0$ by Jensen's inequality, and the convexity of $\Lambda^{\ast
}_{\mu }$ implies that $B_t(\mu) \subseteq B_s(\mu) \subseteq
\frac{s}{t}B_t(\mu)$ for all $s\gr t>0$.

Recall that, by Borell's lemma, every log-concave probability
measure is $c$-regular (see \cite[Theorem~2.4.6]{BGVV-book} for a proof).

\begin{proposition}\label{prop:fact-3}Every log-concave probability measure is $c$-regular, where
$c\gr 1$ is an absolute constant.
\end{proposition}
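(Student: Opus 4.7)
The plan is to reduce the multi-dimensional statement to a one-dimensional tail estimate and apply Borell's lemma. First, I would fix $v\in {\mathbb R}^n\setminus \{0\}$ and observe that the push-forward $\rho_v$ of $\mu $ under the linear map $x\mapsto \langle v,x\rangle $ is a log-concave probability measure on ${\mathbb R}$; this is a direct consequence of the Pr\'ekopa--Leindler inequality (marginals of log-concave functions are log-concave). Setting
$$u_p:=\left(\int_{{\mathbb R}}|y|^p\,d\rho_v(y)\right)^{1/p}=\left(\int_{{\mathbb R}^n}|\langle v,x\rangle |^p\,d\mu (x)\right)^{1/p},$$
it therefore suffices to establish the one-dimensional bound $u_s\ls c\,(s/t)\,u_t$ for all $s\gr t\gr 2$, with a constant $c$ that does not depend on $\rho_v$, $v$ or $n$.

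Next, I would produce an exponential tail bound. Markov's inequality gives $\rho_v([-eu_t,eu_t]^c)\ls e^{-t}$, so the symmetric interval $I=[-eu_t,eu_t]$ satisfies $\theta :=\rho_v(I)\gr 1-e^{-t}\gr 3/4$ for $t\gr 2$. Borell's classical concentration lemma for symmetric convex sets under a log-concave probability measure then yields, for every $\lambda \gr 1$,
$$\rho_v\bigl((\lambda I)^c\bigr)\ls \theta \Bigl(\frac{1-\theta }{\theta }\Bigr)^{(\lambda +1)/2}\ls (2e^{-t})^{(\lambda +1)/2}\ls e^{-t\lambda /4},$$
the last inequality being a consequence of $t\gr 2$.

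Finally, I would integrate the tail. Writing $u_s^s=s\int_0^{\infty }r^{s-1}\rho_v ([-r,r]^c)\,dr$ and splitting at $r=eu_t$, the contribution of $[0,eu_t]$ is bounded by $(eu_t)^s$; after the substitution $r=\lambda eu_t$ the tail estimate above bounds the remaining piece by
$$(eu_t)^s\int_1^{\infty }s\lambda^{s-1}e^{-t\lambda /4}\,d\lambda \ls (eu_t)^s\,\Gamma (s+1)(4/t)^s.$$
Taking $s$-th roots and invoking Stirling's formula ($\Gamma (s+1)^{1/s}\ls cs/e$) shows that each contribution is at most a constant multiple of $(s/t)u_t$ once $s\gr t\gr 2$, whence $u_s\ls c\,(s/t)u_t$. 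The only mildly delicate point is controlling the $s/t$ dependence through Stirling's formula; the rest is a routine application of Borell's lemma and presents no real obstacle.
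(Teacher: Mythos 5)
Your argument is correct and is essentially the proof the paper points to: the paper gives no proof of Proposition~\ref{prop:fact-3} but cites Borell's lemma (via \cite[Theorem~2.4.6]{BGVV-book}), and your reduction to one-dimensional marginals, the Markov-plus-Borell tail bound for the symmetric interval $[-eu_t,eu_t]$, and the layer-cake integration with Stirling is exactly that standard argument. All the quantitative steps (the $\theta\gr 3/4$ threshold for $t\gr 2$, the bound $(2e^{-t})^{(\lambda+1)/2}\ls e^{-t\lambda/4}$, and the final $s$-th root via subadditivity) check out.
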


The next proposition compares $B_t(\mu )$ with $Z_t(\mu )$ when $\mu $ is $\alpha $-regular.

\begin{proposition}\label{prop:fact-4}If $\mu $ is $\alpha $-regular for some $\alpha\gr 1$, then for any $t\gr
2$ we have \begin{equation*}B_t(\mu) \subseteq 4e\alpha
Z_t(\mu).\end{equation*}
\end{proposition}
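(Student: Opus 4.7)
The plan is to pass through the Legendre duality between $\Lambda_{\mu}$ and $\Lambda_{\mu}^{\ast}$. Since $Z_t(\mu)=(M_t(\mu))^{\circ}$ and $M_t(\mu)$ is centrally symmetric, in order to prove $B_t(\mu)\subseteq 4e\alpha\, Z_t(\mu)$ it suffices to verify
$$|\langle v,u\rangle|\ls 4e\alpha\qquad \text{for every } v\in B_t(\mu)\text{ and every } u\in M_t(\mu).$$
Fix such $v,u$. The Fenchel inequality $\langle v,w\rangle\ls \Lambda_{\mu}^{\ast}(v)+\Lambda_{\mu}(w)$ together with $v\in B_t(\mu)$ gives $\langle v,w\rangle\ls t+\Lambda_{\mu}(w)$ for every $w\in{\mathbb R}^n$. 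I would apply this to $w=\lambda u$ with
$$\lambda:=\frac{t}{2e\alpha}$$
and try to show $\Lambda_{\mu}(\lambda u)\ls t$; this yields $\langle v,\lambda u\rangle\ls 2t$, hence $\langle v,u\rangle\ls 2t/\lambda=4e\alpha$, which is the required estimate (the lower bound follows from applying the same argument to $-u\in M_t(\mu)$).

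The core step is therefore the estimate $\Lambda_{\mu}(\lambda u)\ls t$. I would expand the Laplace transform as a power series
$$M_{\mu}(\lambda u)=\sum_{k=0}^{\infty}\frac{\lambda^k}{k!}\int_{{\mathbb R}^n}\langle u,x\rangle^k\,d\mu(x)\ls \sum_{k=0}^{\infty}\frac{\lambda^k}{k!}\int_{{\mathbb R}^n}|\langle u,x\rangle|^k\,d\mu(x),$$
and split the sum according to whether $k\ls t$ or $k>t$. For integers $0\ls k\ls t$, Jensen's inequality combined with $u\in M_t(\mu)$ gives $\int|\langle u,x\rangle|^k\,d\mu\ls 1$, so the contribution of these indices is at most $e^{\lambda}\ls e^{t/2}$ (since $\alpha\gr 1$ implies $\lambda\ls t/(2e)\ls t/2$). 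For integers $k>t$, $\alpha$-regularity applied with $s=k$ yields $\int|\langle u,x\rangle|^k\,d\mu\ls (\alpha k/t)^k$; combining with $k!\gr (k/e)^k$ this produces the geometric bound
$$\frac{\lambda^k}{k!}\int|\langle u,x\rangle|^k\,d\mu\ls \left(\frac{e\alpha\lambda}{t}\right)^{\! k}=2^{-k},$$
so the tail contributes at most $\sum_{k\gr 1}2^{-k}=1$, and in particular at most $2$.

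Putting the two bounds together gives $M_{\mu}(\lambda u)\ls e^{t/2}+2$, and a short numerical check shows $e^{t/2}+2\ls e^t$ for all $t\gr 2$; consequently $\Lambda_{\mu}(\lambda u)\ls t$, which is what was needed. I do not expect any genuine obstacle here: the only substantive choice in the argument is the scaling $\lambda=t/(2e\alpha)$, dictated precisely by the requirement that the geometric ratio appearing in the tail of the power series equal $1/2$; the rest is bookkeeping around Fenchel duality and the two elementary bounds (Jensen in the low range, $\alpha$-regularity together with Stirling in the high range).
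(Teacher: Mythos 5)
Your argument is correct and is essentially the paper's own proof: the same rescaling $\lambda=t/(2e\alpha)$, the same power-series expansion of the Laplace transform split at $k\ls t$ versus $k>t$ (Jensen plus $u\in M_t(\mu)$ in the low range, $\alpha$-regularity plus $k!\gr (k/e)^k$ in the tail), yielding $\Lambda_{\mu}(\lambda u)\ls t$. The only cosmetic difference is that the paper phrases the conclusion contrapositively (if $v\notin 4e\alpha Z_t(\mu)$, pick $u\in M_t(\mu)$ with $\langle v,u\rangle>4e\alpha$ and deduce $\Lambda_{\mu}^{\ast}(v)>t$), whereas you run the Fenchel inequality forwards; these are the same computation.
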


\begin{proof}We first check that if $u\in M_t(\mu)$ then
\begin{equation*}\Lambda_{\mu }\left ( \frac{tu}{2e\alpha } \right
)\ls t.\end{equation*} We fix $u\in M_t(\mu)$ and set
$\tilde{u}:=\frac{tu}{2e\alpha}$. Then, \begin{equation*}\left (
\int_{{\mathbb R}^n} |\langle\tilde{u},x\rangle |^kd\mu(x)\right
)^{1/k}= \frac{t}{2e\alpha}\left (\int_{{\mathbb R}^n} |\langle
u,x\rangle |^kd\mu(x)\right )^{1/k},\end{equation*} which is bounded
by $\frac{t}{2e\alpha}$ if $k\ls t$ and by $\frac{k}{2e}$ if $k>t$.
It follows that
\begin{align*}
\int_{{\mathbb R}^n} e^{\langle\tilde{u},x\rangle }d\mu(x) &\ls
\int_{{\mathbb R}^n} e^{|\langle\tilde{u},x\rangle |}d\mu(x) =
\sum_{k=0}^{\infty
}\frac{1}{k!}\int_{{\mathbb R}^n} |\langle\tilde{u}, x\rangle |^kd\mu(x)\\
&\ls \sum_{k\ls t}\frac{1}{k!} \left |\frac{t}{2e\alpha}\right |^k +
\sum_{k>t}\frac{1}{k!}\left |\frac{k}{2e}\right |^k \ls
e^{\frac{t}{2e\alpha}}+1\ls e^t\end{align*} and the claim follows.

Now, let $v\notin 4e\alpha Z_t(\mu)$. We can find $u\in M_t(\mu)$
such that $\langle v,u\rangle > 4e\alpha$ and then
\begin{equation*}\Lambda^{\ast }_{\mu }(v) \gr \Big\langle v, \frac{tu}{2e\alpha}\Big\rangle
-\Lambda_{\mu }\left ( \frac{tu}{2e\alpha}\right )>
\frac{t}{2e\alpha}4e\alpha -t=t.\end{equation*}Therefore, $v\notin
B_t(\mu )$. \end{proof}

\smallskip

By Proposition~\ref{prop:fact-3}, we have that Proposition~\ref{prop:fact-4} holds true (with an absolute constant in
place of $4e\alpha $) for every log-concave probability measure.

\smallskip

\noindent {\bf 2.5. Ball's bodies $K_t(\mu )$.} If $\mu$ is a log-concave probability
measure on $\mathbb R^n$ then, for every $t>0$, we define
\begin{equation*}
K_t(\mu):=K_t(f_\mu)=\left\{x\in {\mathbb R}^n : \int_0^\infty r^{t-1}f_\mu(rx)\,
dr\gr \frac{f_{\mu }(0)}{t} \right\}.
\end{equation*}
From the definition it follows that the radial function of $K_t(\mu )$ is given by
\begin{equation}\label{eq:radial-Kp}
\varrho_{K_t(\mu )}(x)=\left (\frac{1}{f_{\mu }(0)}\int_0^{\infty}tr^{t-1}f_{\mu }(rx)\,dr\right )^{1/t}\end{equation}
for $x\neq 0$. The bodies $K_t(\mu )$ were introduced by K.~Ball who also established their convexity. If $\mu $ is also centered then, for every $0 <
t\ls s$,
\begin{equation}\label{eq:inclusionsKp}\frac{\Gamma(t+1)^{\frac{1}{t}}}{\Gamma(s+1)^{\frac{1}{s}}} K_s(\mu )\subseteq
K_{t}(\mu )\subseteq e^{\frac{n}{t}-\frac{n}{s}}K_{s}(\mu ).
\end{equation} A proof is given in \cite[Proposition~2.5.7]{BGVV-book}. It is easily checked that
\begin{equation}\label{eq:volumeKn}|K_n(f)|\,f_{\mu }(0)=\int_{\mathbb R^n} f_{\mu }(x)dx=1\end{equation}
(see e.g. \cite[Lemma~2.5.6]{BGVV-book}) and then we can use the inclusions \eqref{eq:inclusionsKp} in order to
estimate the volume of $K_t(\mu )$. For every $t>0$ we have
\begin{equation}\label{eq:volumeKp}
e^{-1}\ls f_{\mu }(0)^{\frac{1}{n}+\frac{1}{t}}|K_{n+t}(\mu )|^{\frac{1}{n}+\frac{1}{t}}\ls
e\frac{n+t}{n}.
\end{equation}
We are mainly interested in the convex body $K_{n+1}(\mu )$. We shall use the fact that $K_{n+1}(\mu )$ is centered (see \cite[Proposition~2.5.3\,(v)]{BGVV-book}) and that
\begin{equation}\label{eq:volumeKn+1}f_{\mu }(0)|K_{n+1}(\mu )|\approx 1.\end{equation}
The last estimate follows immediately from \eqref{eq:volumeKn} and \eqref{eq:volumeKp}.

\section{Upper bound for the expected value of the half-space depth}\label{section-3}

Let $\mu $ and $\nu $ be two log-concave probability measures on ${\mathbb R}^n$ with the same barycenter.
If $T:{\mathbb R}^n\to {\mathbb R}^n$ is an invertible affine transformation and $T_{\ast}\mu $ is the push-forward
of $\mu $ defined by $T_{\ast}\mu (A)=\mu (T^{-1}(A))$ then
we observe that $\varphi_{T_{\ast }\mu }(x)=\varphi_{\mu }(T^{-1}(x))$ for all $x\in {\mathbb R}^n$, and hence
$$\int_{{\mathbb R}^n}\varphi_{T_{\ast }\mu }(x)dT_{\ast }\nu (x)=\int_{{\mathbb R}^n}\varphi_{\mu }(T^{-1}(x))dT_{\ast}\nu (x)
=\int_{{\mathbb R}^n}\varphi_{\mu }(x)d\nu (x).$$
Therefore, Theorem~\ref{th:intro-1} is a consequence of Theorem~\ref{th:intro-2}. Starting with a log-concave probability
measure $\mu $ on ${\mathbb R}^n$, we consider an affine transformation $T$ such that $T_{\ast }\mu $ is isotropic and
then apply Theorem~\ref{th:intro-2} to the measures $T_{\ast }\mu $ and $\nu =T_{\ast }\mu $.

\medskip

\begin{proof}[Proof of Theorem~$\ref{th:intro-2}$] Consider two isotropic log-concave probability measures $\mu, \nu $ on ${\mathbb R}^n$.
We will show that
$$\int_{\mathbb{R}^n}\varphi_{\mu }(x)\,d\nu(x) \ls e^{-cn/L_{\nu }^2}$$
for some absolute constant $c>0$.  We start with the observation that for any $v\in {\mathbb R}^n$ the half-space
$\{z:\langle z,v \rangle \gr \langle x,v\rangle\}$ is in ${\cal H}(x)$, therefore
\begin{equation*}
\varphi_{\mu } (x) \ls \mu (\{z:\langle z,v \rangle \gr \langle x,v\rangle\} )
\ls e^{-\langle x,v\rangle }{\mathbb E}_{\mu }\big(e^{\langle z,v\rangle }\big)=\exp \big(-[\langle x,v\rangle -\Lambda_{\mu }(v)]\big),\end{equation*}
and taking the infimum over all $v\in {\mathbb R}^n$ we see that
$$\varphi_{\mu } (x)\ls\exp (-\Lambda_{\mu }^{\ast }(x)).$$
Then we write
\begin{align*}\int_{\mathbb{R}^n}\varphi_{\mu } (x)\,d\nu(x) &\ls \int_{\mathbb{R}^n}e^{-\Lambda_{\mu }^{\ast } (x)}f_{\nu}(x)\, dx=\int_{\mathbb{R}^n}\left(\int_{\Lambda_{\mu }^{\ast } (x)}^{\infty }e^{-t}dt\,\right)f_{\nu}(x)dx\\
&=\int_0^{\infty }e^{-t}\int_{\mathbb{R}^n}{\bf{1}}_{B_t(\mu )}(x)f_{\nu}(x)dx\,dt=\int_0^{\infty }e^{-t}\nu (B_t(\mu ))\,dt.
\end{align*}
Fix $b\in (2/n,1/2]$ which will be chosen appropriately. Since $\nu (B_t(\mu ))\ls 1$ and also $\nu (B_t(\mu ))\ls\|f_{\nu }\|_{\infty }|B_t(\mu )|$
for all $t>0$, we may write
\begin{align*}
\int_{\mathbb{R}^n}\varphi_{\mu }(x)\,d\nu(x) &\ls\int_{bn}^{\infty }e^{-t}\nu (B_t(\mu ))dt +\|f_{\nu}\|_{\infty}\int_0^{bn}e^{-t}|B_t(\mu )|\,dt\\
&\ls\int_{bn}^{\infty }e^{-t}\,dt+L_{\nu }^n\int_0^2e^{-t}|B_t(\mu )|\,dt+L_{\nu }^n\int_2^{bn}e^{-t}|B_t(\mu )|\,dt \\
&\ls e^{-bn}+L_{\nu }^n|B_2(\mu )|+L_{\nu }^n\int_2^{bn}e^{-t}|B_t(\mu )|\,dt.
\end{align*}
Applying Proposition~\ref{prop:fact-4} and Theorem~\ref{th:fact-1} we get
$$|B_t(\mu )|^{1/n}\ls c_1|Z_t(\mu)|^{1/n}\ls c_2\sqrt{t/n}$$
for all $2\ls t\ls n$, where $c_1,c_2>0$ are absolute constants. It is also known that $L_{\nu }\gr c_3$
where $c_3>0$ is an absolute constant (see \cite[Proposition~2.3.12]{BGVV-book} for a proof). So, we
may assume that $c_2L_{\nu }\gr \sqrt{2}$. Choosing $b_0:=1/(c_2L_{\nu })^2\ls 1/2$ we write
$$L_{\nu }^n\int_2^{b_0n}e^{-t}|B_t(\mu )|\,dt\ls c_2^nL_{\nu }^n\int_2^{b_0n}(t/n)^{n/2}e^{-t}dt
=(c_2L_{\nu })^n\int_2^{b_0n}(t/n)^{n/2}e^{-t}dt,$$
and since $b_0n\ls n/2$ and the function $t\mapsto t^{n/2}e^{-t}$ is increasing on $[0,n/2]$, we get
$$(c_2L_{\nu })^n\int_2^{b_0n}e^{-t}|B_t(\mu )|\,dt\ls (b_0n-2)\cdot (c_2L_{\nu })^nb_0^{n/2}e^{-b_0n}=(b_0n-2)e^{-b_0n}.$$
Moreover, $|B_2(\mu )|^{1/n}\ls c_2\sqrt{2/n}$, therefore
$$L_{\nu }^n|B_2(\mu )|\ls (c_4L_{\nu }^2/n)^{n/2}\ls e^{-b_0n},$$
because $c_4L_{\nu }^2/n\ls e^{-2}$ if $n\gr n_0$. Combining the above we get
$$\int_{\mathbb{R}^n}\varphi_{\mu } (x)\,d\nu(x)\ls e^{-b_0n}+e^{-b_0n}+(b_0n-2)e^{-b_0n},$$
and hence
$$\int_{\mathbb{R}^n}\varphi_{\mu }(x)\,d\nu(x) \ls n\exp\left(-n/(c_2L_{\nu })^2\right)$$
which implies the result. \end{proof}

\begin{remark}\label{rem:variant}\rm In the introduction we have already mentioned that the assumption that
both $\mu $ and $\nu $ are isotropic is not necessary. One may consider different situations, where $\mu $
and $\nu $ are centered and $\|f_{\nu }\|_{\infty }$ is comparable with $\|f_{\mu }\|_{\infty }$. For example,
the next result can be obtained with the ideas that were used in the proof of Theorem~\ref{th:intro-2}.
\end{remark}

\begin{theorem}\label{th:variant}Let $\mu$ and $\nu $ be two centered log-concave probability measures on ${\mathbb R}^n$, $n\gr n_0$,
such that $\|f_{\mu}\|_{\infty }=\|f_{\nu }\|_{\infty }$. Then,
$${\mathbb E}_{\nu }(\varphi_{\mu }):=\int_{\mathbb{R}^n}\varphi_{\mu }(x)\,d\nu(x)\ls \exp\left(-cn/L_{\mu }^2\right),$$
where $c>0$, $n_0\in {\mathbb N}$ are absolute constants.
\end{theorem}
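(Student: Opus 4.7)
The plan is to follow the proof of Theorem~\ref{th:intro-2} line by line, modifying only the normalization step. Starting from the bound $\varphi_{\mu}(x)\ls e^{-\Lambda_{\mu}^{\ast}(x)}$ and applying Fubini's theorem, one obtains
$$\int_{\mathbb R^n}\varphi_{\mu}(x)\,d\nu(x)\ls \int_0^{\infty}e^{-t}\nu(B_t(\mu))\,dt.$$
I would split this integral at a threshold $b_0 n$ with $b_0\in(2/n,1/2]$, to be chosen below, bounding the tail trivially by $\nu(B_t(\mu))\ls 1$ (yielding $e^{-b_0 n}$) and the main piece by $\|f_{\nu}\|_{\infty}\int_0^{b_0 n}e^{-t}|B_t(\mu)|\,dt$, exactly as in the proof of Theorem~\ref{th:intro-2}.

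The crucial algebraic input is as follows. Since $\mu$ is a centered log-concave probability measure, definition~\eqref{eq:L} yields $L_{\mu}^n=\|f_{\mu}\|_{\infty}[\det\textrm{Cov}(\mu)]^{1/2}$, and the hypothesis $\|f_{\nu}\|_{\infty}=\|f_{\mu}\|_{\infty}$ allows me to rewrite this as
$$\|f_{\nu}\|_{\infty}[\det\textrm{Cov}(\mu)]^{1/2}=L_{\mu}^n.$$
Combining Proposition~\ref{prop:fact-4} with the general (non-isotropic) form of Theorem~\ref{th:fact-1} applied to the centered measure $\mu$ gives $|B_t(\mu)|^{1/n}\ls c_1\sqrt{t/n}\,[\det\textrm{Cov}(\mu)]^{1/(2n)}$ for every $2\ls t\ls n$; together with the identity above this becomes
$$\|f_{\nu}\|_{\infty}|B_t(\mu)|\ls \bigl(c_1 L_{\mu}\sqrt{t/n}\bigr)^n.$$

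Now I would choose $b_0:=1/(c_1L_{\mu})^2$, which is at most $1/2$ because $L_{\mu}$ is bounded below by an absolute positive constant (see \cite[Proposition~2.3.12]{BGVV-book}). Using the monotonicity of $t\mapsto (t/n)^{n/2}e^{-t}$ on $[0,n/2]$, exactly as in the proof of Theorem~\ref{th:intro-2}, one obtains
$$(c_1 L_{\mu})^n\int_2^{b_0 n}(t/n)^{n/2}e^{-t}\,dt\ls (b_0 n-2)\,e^{-b_0 n},$$
while the separate term $\|f_{\nu}\|_{\infty}|B_2(\mu)|\ls (2c_1^2L_{\mu}^2/n)^{n/2}$ is dominated by $e^{-b_0 n}$ once $n\gr n_0$; here I rely on Klartag's bound $L_{\mu}\ls c\sqrt{\ln n}$ to ensure that $L_{\mu}^2/n$ is sufficiently small. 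Summing the three contributions and absorbing the polynomial prefactor $n$ into a slightly smaller constant in the exponent yields the desired inequality $\int_{\mathbb R^n}\varphi_{\mu}\,d\nu\ls \exp(-cn/L_{\mu}^2)$.

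No genuine obstacle arises: the computation is parallel to that of Theorem~\ref{th:intro-2}, with the isotropic identities $\|f_{\nu}\|_{\infty}=L_{\nu}^n$ and $\textrm{Cov}(\mu)=I_n$ simply replaced by the single relation $\|f_{\nu}\|_{\infty}[\det\textrm{Cov}(\mu)]^{1/2}=L_{\mu}^n$ forced by the hypothesis, which has the effect of swapping $L_{\nu}$ for $L_{\mu}$ in the exponent of the final estimate.
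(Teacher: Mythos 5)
Your proposal is correct and follows exactly the route the paper itself indicates for Theorem~\ref{th:variant}: the Cram\'{e}r-transform bound and Fubini reduction to $\int_0^\infty e^{-t}\nu(B_t(\mu))\,dt$, the split at $b_0n$, the non-isotropic form of Theorem~\ref{th:fact-1} combined with Proposition~\ref{prop:fact-4}, and the key identity $\|f_{\nu}\|_{\infty}[\det\textrm{Cov}(\mu)]^{1/2}=\|f_{\mu}\|_{\infty}[\det\textrm{Cov}(\mu)]^{1/2}=L_{\mu}^n$, after which the computation of Theorem~\ref{th:intro-2} carries over with $L_{\nu}$ replaced by $L_{\mu}$. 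No gaps; the appeal to Klartag's bound to ensure $L_{\mu}^2/n$ is small for $n\gr n_0$ is stronger than needed but perfectly valid.
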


The proof of Theorem~\ref{th:variant} is quite similar to the one of Theorem~\ref{th:intro-2}. We fix $b\in (2/n,1/2]$ and write
\begin{align*}\int_{\mathbb{R}^n}\varphi_{\mu } (x)\,d\nu(x) &\ls \int_{\mathbb{R}^n}e^{-\Lambda_{\mu }^{\ast } (x)}f_{\nu}(x)\, dx
=\int_0^{\infty }e^{-t}\nu (B_t(\mu ))\,dt\\
&\ls e^{-bn}+\|f_{\nu }\|_{\infty }|B_2(\mu )|+\|f_{\nu }\|_{\infty }\int_2^{bn}e^{-t}|B_t(\mu )|\,dt.
\end{align*}
Then, we use the upper bound
$$|B_t(\mu )|^{1/n}\ls c_1|Z_t(\mu)|^{1/n}\ls c_2\sqrt{t/n}[\det \textrm{Cov}(\mu)]^{\frac{1}{2n}},$$
observe that
$$\|f_{\nu }\|_{\infty }[\det \textrm{Cov}(\mu)]^{\frac{1}{2}}=\|f_{\mu }\|_{\infty }[\det \textrm{Cov}(\mu)]^{\frac{1}{2}}=L_{\mu }^n.$$
and continue as in the proof of Theorem~\ref{th:intro-2}.

\section{Lower bound for the expected value of the half-space depth}\label{section-4}

In this section we show that the exponential estimate of Theorem~\ref{th:intro-1} is sharp. As explained in the introduction,
for the reader's convenience we consider first the simpler case where $\mu $ is the uniform measure on a convex body $K$ in
${\mathbb R}^n$ and then present the more technical tools and computations that are required for the case of an arbitrary log-concave probability measure $\mu $ on ${\mathbb R}^n$.

\subsection{Uniform measure on a convex body}

The next proposition provides an exponential lower bound for ${\mathbb E}_{\mu_K}(\varphi_{\mu_K})$, where $\mu_K$ is the
uniform measure on $K$.

\begin{proposition}\label{prop:body}Let $K$ be a convex body of volume $1$ in ${\mathbb R}^n$. Then,
$$\int_K\varphi_{\mu_K}(x)dx\gr e^{-cn},$$
where $c>0$ is an absolute constant.
\end{proposition}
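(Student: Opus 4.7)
The plan is to establish the pointwise lower bound $\varphi_{\mu_K}(x)\gr e^{-1}2^{-n}$ for every $x\in\tfrac{1}{2}K$. Once this is in place the conclusion is immediate: since $\tfrac{1}{2}K\subseteq K$ and $|\tfrac{1}{2}K|=2^{-n}$,
$$\int_K\varphi_{\mu_K}(x)\,dx \gr \int_{\tfrac{1}{2}K}\varphi_{\mu_K}(x)\,dx \gr 2^{-n}\cdot e^{-1}2^{-n} = e^{-1}4^{-n},$$
which is of the required form $e^{-cn}$. Before proving the pointwise bound I would reduce to the case where $K$ is centered: by the calculation at the start of Section~\ref{section-3}, applied to $\mu=\nu=\mu_K$ with $T$ a translation, the quantity $\int_K\varphi_{\mu_K}(x)\,dx$ is translation-invariant, so we may assume without loss of generality that the barycenter of $K$ lies at the origin.

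To prove the pointwise bound, fix $x\in\tfrac{1}{2}K$ and $H\in\mathcal{H}(x)$. Shrinking $H$ only decreases $\mu_K(H)$, so we may assume that $x$ lies on $\partial H$ and write $H=\{z:\langle z,v\rangle\gr\langle x,v\rangle\}$ for some unit vector $v$. The key idea is to consider the affine contraction $T(z)=\tfrac{1}{2}z+x$, which is the homothety of ratio $\tfrac{1}{2}$ centered at $2x$. Since $x\in\tfrac{1}{2}K$ gives $2x\in K$, and since $T(z)$ is the midpoint of $z$ and $2x$, convexity of $K$ yields $T(K)\subseteq K$. Moreover, if $\langle z,v\rangle\gr 0$ then
$$\langle T(z),v\rangle = \tfrac{1}{2}\langle z,v\rangle+\langle x,v\rangle \gr \langle x,v\rangle,$$
so $T$ maps the origin-halfspace $H_0:=\{z:\langle z,v\rangle\gr 0\}$ into $H$. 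Hence $T(K\cap H_0)\subseteq K\cap H$, and since $T$ scales $n$-dimensional volume by $2^{-n}$ we obtain $|K\cap H|\gr 2^{-n}|K\cap H_0|$. Because $K$ is centered and the bounding hyperplane of $H_0$ passes through the origin, Grünbaum's inequality gives $|K\cap H_0|\gr (n/(n+1))^n\gr e^{-1}$. Taking the infimum over $H\in\mathcal{H}(x)$ yields the desired pointwise bound.

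There is no substantial obstacle in this uniform-measure setting; once the right contraction $T$ is identified, the argument is a direct combination of convexity with Grünbaum's theorem, and a factor of $2^{-n}$ from each of the two steps (the dilation of the halfspace and the volume of $\tfrac{1}{2}K$) produces the $4^{-n}$ loss. The real technical difficulties lie ahead in the general log-concave case, where $\tfrac{1}{2}K$ must be replaced by a dilate of the one-sided centroid body $Z_t^+(\mu)$ with $t\approx n$, and one must use the family of Ball's bodies $K_t(\mu)$ to produce a suitable lower bound for $\mu(\tfrac{1}{2}Z_{5n}^+(\mu))$ that plays the role of $|\tfrac{1}{2}K|=2^{-n}$.
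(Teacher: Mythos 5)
Your proof is correct, and while its overall architecture matches the paper's (reduce to centered $K$, prove a pointwise bound $\varphi_{\mu_K}(x)\gr c^n$ on $\tfrac{1}{2}K$, multiply by $\bigl|\tfrac{1}{2}K\bigr|=2^{-n}$), the key pointwise estimate is obtained by a genuinely different and more elementary argument. The paper fixes $\xi$ with $\langle x,\xi\rangle\gr 0$, sets $m=h_K(\xi)$, and bounds $|\{z\in K:\langle z,\xi\rangle\gr m/2\}|$ via the Brunn--Minkowski concavity of the section function $g(t)=|K(\xi,t)|_{n-1}$ together with Fradelizi's inequality \eqref{eq:frad-1} ($\|g\|_{\infty}\ls e\,g(0)$) and Gr\"unbaum's lemma; this yields $\varphi_{\mu_K}(x)\gr\frac{1}{e^2n}\cdot 2^{-n}$. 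You instead observe that the homothety $T(z)=\tfrac{1}{2}z+x$ centered at $2x\in K$ maps $K$ into $K$ and the halfspace $\{\langle z,v\rangle\gr 0\}$ into $\{\langle z,v\rangle\gr\langle x,v\rangle\}$, so a single application of Gr\"unbaum gives $|K\cap H|\gr 2^{-n}e^{-1}$ directly. Your route avoids Fradelizi's section inequality entirely, needs no case split on the sign of $\langle x,\xi\rangle$, and even gives a marginally better constant ($e^{-1}2^{-n}$ instead of $\frac{1}{e^2n}2^{-n}$). What the paper's section-function computation buys is reusability: the same estimate for $|\{z\in K:\langle z,\xi\rangle\gr h_K(\xi)/2\}|$ is invoked verbatim in Lemma~\ref{lem:measure-5} for the body $c_0K_{n+1}(\mu)$ in the general log-concave case, though your homothety argument (applied at $x=y/2$ for $y$ attaining $h_K(\xi)$) would serve there just as well.
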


\begin{proof}By translation invariance we may assume that the barycenter of $K$ is at the origin. Let $x\in\frac{1}{2}K$. We will show that
$\varphi_{\mu_K}(x)\gr \frac{1}{e^2n}\cdot\frac{1}{2^n}$. It suffices to show that
\begin{equation}\label{eq:body-1}\inf\,|\{z\in K:\langle z,\xi\rangle \gr\langle x,\xi\rangle\}|\gr\frac{1}{e^2n}\cdot\frac{1}{2^n},\end{equation}
where the infimum is over all $\xi\in S^{n-1}$, because by the definition of $\varphi_{\mu_K}(x)$ we only have to check the
half-spaces $H\in {\cal H}(x)$ for which $x$ is a boundary point. Moreover, we may consider only those $\xi\in S^{n-1}$
that satisfy $\langle x,\xi\rangle\gr 0$, because if
$\langle x,\xi\rangle <0$ then
$$|\{z\in K:\langle z,\xi\rangle \gr\langle x,\xi\rangle\}|\gr |\{z\in K:\langle z,\xi\rangle \gr 0\}|\gr 1/e$$
by Gr\"{u}nbaum's lemma (see \cite[Lemma~2.2.6]{BGVV-book}). Fix $\xi\in S^{n-1}$ with $\langle x,\xi\rangle\gr 0$
and set $m=h_K(\xi )=\max\{\langle z,\xi\rangle :z\in K\}$. Since $\langle x,\xi\rangle\ls
m/2$, it is enough to show that
\begin{equation}\label{eq:body-2}|\{z\in K:\langle z,\xi\rangle \gr m/2\}|\gr\frac{1}{e^2n}\cdot\frac{1}{2^n}.\end{equation}
Consider the function $g(t)=|K(\xi ,t)|_{n-1}$, where $K(\xi ,t)=\{z\in K:\langle z,\xi\rangle =t\}$, $t\in [0,m]$
and $|\cdot|_{n-1}$ denotes $(n-1)$-dimensional volume. The
Brunn-Minkowski inequality implies that $g^{\frac{1}{n-1}}$ is concave. Therefore, for every $r\in [0,m]$ we have that
$$g(r)\gr \left(1-\frac{r}{m}\right)^{n-1}g(0).$$
We write
\begin{align*}|\{z\in K:\langle z,\xi\rangle \gr m/2\}| &=\int_{m/2}^mg(r)\,dr\gr g(0)\int_{m/2}^m\left(1-\frac{r}{m}\right)^{n-1}dr\\
&= g(0)m\int_{1/2}^1(1-s)^{n-1}ds=\frac{1}{n2^n}g(0)m.
\end{align*}
Since $K$ is centered, we know that $\|g\|_{\infty }\ls e\,|K\cap\xi^{\perp }|_{n-1}=eg(0)$ from \eqref{eq:frad-1}. Then, using also Gr\"{u}nbaum's lemma,
we see that
$$\frac{1}{e}\ls \int_0^mg(r)\,dr\ls \|g\|_{\infty }m\ls eg(0)m,$$
and \eqref{eq:body-2} follows. It is now clear that
$$\int_K\varphi_{\mu_K}(x)dx\gr \int_{\frac{1}{2}K}\varphi_{\mu_K}(x)dx\gr \Big|\frac{1}{2}K\Big|\cdot \frac{1}{e^2n}\cdot\frac{1}{2^n}= \frac{1}{e^2n}\cdot\frac{1}{4^n}\gr e^{-cn}$$
for some absolute constant $c>0$.
\end{proof}

\subsection{Log-concave probability measures}

Next, we assume that $\mu $ is a log-concave probability measure on ${\mathbb R}^n$. Our aim is to prove
the next theorem.

\begin{theorem}\label{th:measure}Let $\mu $ be a log-concave probability measure on ${\mathbb R}^n$. Then,
$$\int_{{\mathbb R}^n}\varphi_{\mu}(x)d\mu(x)\gr e^{-cn},$$
where $c>0$ is an absolute constant.
\end{theorem}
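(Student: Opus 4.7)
The plan is to follow the structure of the proof of Proposition~\ref{prop:body}, replacing $\tfrac12 K$ by $\tfrac12 Z_{5n}^+(\mu)$. Since $\int_{{\mathbb R}^n}\varphi_\mu\,d\mu$ is invariant under invertible affine transformations, I may assume $\mu$ is isotropic; in particular $\mu$ is centered and $\|f_\mu\|_\infty = L_\mu^n$. The aim is to establish two independent exponential lower bounds, one pointwise on $\tfrac12 Z_{5n}^+(\mu)$ and one on the $\mu$-measure of this set, and then multiply.

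For the pointwise step, fix $x\in\tfrac12 Z_{5n}^+(\mu)$. As in Proposition~\ref{prop:body}, it suffices to bound $\mu(\{z:\langle z,\xi\rangle\gr\langle x,\xi\rangle\})$ from below, uniformly in $\xi\in S^{n-1}$. When $\langle x,\xi\rangle\ls 0$, Gr\"unbaum's lemma for the centered log-concave measure $\mu$ gives the bound $1/e$. When $\langle x,\xi\rangle>0$, the containment $x\in\tfrac12 Z_{5n}^+(\mu)$ yields $\langle x,\xi\rangle\ls\tfrac12 h_{Z_{5n}^+(\mu)}(\xi)$, and writing $Y=\langle z,\xi\rangle_+^{5n}$ a Paley--Zygmund argument combined with \eqref{eq:pz-1} gives
$$\mu\bigl(\{z:\langle z,\xi\rangle \gr\tfrac12 h_{Z_{5n}^+(\mu)}(\xi)\}\bigr)\gr (1-2^{-5n})^2\,\frac{({\mathbb E} Y)^2}{{\mathbb E} Y^2}\gr\tfrac12 C^{-10n}\gr e^{-c_1 n}.$$
Thus $\varphi_\mu(x)\gr e^{-c_1 n}$ for every $x\in\tfrac12 Z_{5n}^+(\mu)$.

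The measure lower bound $\mu\bigl(\tfrac12 Z_{5n}^+(\mu)\bigr)\gr e^{-c_2 n}$ is the substantive step. My plan is to pass through Ball's body $K_{n+1}(\mu)$, which is centered and by \eqref{eq:volumeKn+1} satisfies $f_\mu(0)|K_{n+1}(\mu)|\approx 1$. Using an inclusion of the form $c_3 K_{n+1}(\mu)\subseteq Z_n^+(\mu)\subseteq Z_{5n}^+(\mu)$, which follows from a direct comparison between the radial formula \eqref{eq:radial-Kp} and the integral defining $h_{Z_n^+(\mu)}$, the problem reduces to producing an $e^{-c_2 n}$ lower bound for $\mu(c_4 K_{n+1}(\mu))$ for some absolute $c_4>0$. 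In turn, this follows from the identity $f_\mu(0)|K_{n+1}(\mu)|\approx 1$ together with a log-concavity estimate ensuring that $f_\mu\gr e^{-c_5 n}f_\mu(0)$ on a portion of $K_{n+1}(\mu)$ of volume comparable to $|K_{n+1}(\mu)|$; multiplying the density and volume bounds absorbs the $2^{-n}$ loss from the dilation into the final exponent.

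Combining the two estimates,
$$\int_{{\mathbb R}^n}\varphi_\mu(x)\,d\mu(x) \gr \int_{\tfrac12 Z_{5n}^+(\mu)}\varphi_\mu(x)\,d\mu(x) \gr e^{-c_1 n}\,\mu\bigl(\tfrac12 Z_{5n}^+(\mu)\bigr)\gr e^{-cn}.$$
The main obstacle will be the second step: identifying a clean inclusion between $K_{n+1}(\mu)$ and an appropriate dilate of $Z_{5n}^+(\mu)$, and converting the volumetric identity $f_\mu(0)|K_{n+1}(\mu)|\approx 1$ into a genuine measure estimate $\mu(c_4 K_{n+1}(\mu))\gr e^{-c_2 n}$; by contrast, the pointwise Paley--Zygmund argument is essentially routine once \eqref{eq:pz-1} is in hand. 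This is also where, as the introduction suggests, additional auxiliary convex bodies associated to $\mu$ may need to be brought in.
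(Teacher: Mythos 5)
Your outline matches the paper's: lower-bound $\varphi_\mu$ pointwise on $\tfrac12 Z_{5n}^+(\mu)$ via Paley--Zygmund and \eqref{eq:pz-1}, then lower-bound $\mu\bigl(\tfrac12 Z_{5n}^+(\mu)\bigr)$ by passing through $K_{n+1}(\mu)$ and \eqref{eq:volumeKn+1}. The pointwise step is essentially the paper's Lemma~\ref{lem:measure-1} and is complete. The measure step, which you correctly flag as the main obstacle, contains two genuine gaps.

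First, the inclusion $c\,K_{n+1}(\mu)\subseteq Z_{5n}^+(\mu)$ does \emph{not} follow from a ``direct comparison between the radial formula \eqref{eq:radial-Kp} and the integral defining $h_{Z_t^+(\mu)}$.'' These are objects of different type: $\varrho_{K_{n+1}(\mu)}(\xi)$ is a one-dimensional radial integral of $f_\mu$ along the ray $\mathbb{R}_+\xi$, while $h_{Z_t^+(\mu)}(\xi)^t$ is an $n$-dimensional integral of $\langle x,\xi\rangle_+^t f_\mu(x)$. There is no pointwise comparison; you have to argue globally. The paper's Lemma~\ref{lem:measure-5} obtains the inclusion by intersecting $c_0 K_{n+1}(\mu)$ with a half-space, applying the Grünbaum/Brunn--Minkowski slab estimate from Proposition~\ref{prop:body} to bound the volume of that slab, and then using a pointwise density lower bound on that slab to bound the integral $\int\langle x,\xi\rangle_+^t d\mu$ from below.

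Second, that pointwise density lower bound, which you call ``a log-concavity estimate ensuring $f_\mu\gr e^{-c_5 n}f_\mu(0)$ on a portion of $K_{n+1}(\mu)$,'' is itself a substantive fact and not an immediate consequence of log-concavity alone. The paper introduces the level sets $R_t(\mu)=\{f_\mu\gr e^{-t}f_\mu(0)\}$ and proves $R_t(\mu)\supseteq c_0 K_{n+1}(\mu)$ for $t\gr 5n$ (Lemma~\ref{lem:measure-4}); the proof uses Klartag's one-dimensional estimate $\int_0^{\varrho_{R_t(\mu)}(\xi)}r^{n-1}h(r)\,dr\gr(1-e^{-t/8})\int_0^\infty r^{n-1}h(r)\,dr$ together with Fradelizi's bound $\|f_\mu\|_\infty\ls e^n f_\mu(0)$ from \eqref{eq:frad-2}. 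Moreover, this density bound is an input to the inclusion in Lemma~\ref{lem:measure-5}, so the two ``ingredients'' you treat as independent are in fact sequential. As written, your proposal names the right bodies and the right final computation, but neither of the two lemmas it relies on is actually established, and the suggested shortcut for the inclusion would not work.
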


By the affine invariance of ${\mathbb E}_{\mu }(\varphi_{\mu })$ we may assume that
$\mu $ is centered. The proof is based on a number of observations. The first one is a consequence of the Paley-Zygmund inequality; we just adapt here the proof of \cite[Lemma~11.3.3]{BGVV-book} to give a lower bound for $\varphi_{\mu }(x)$ when $x\in \delta Z_t^+(\mu )$ for some $\delta\in (0,1)$.

\begin{lemma}\label{lem:measure-1}Let $t\gr 1$ and $\delta\in (0,1)$. For every $x\in \delta Z_t^+(\mu )$ one
has \begin{equation*}\varphi_{\mu }(x)\gr
\frac{(1-\delta^t)^2}{C_1^t},\end{equation*}
where $C_1>1$ is an absolute constant.
\end{lemma}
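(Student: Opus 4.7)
The plan is to unwind the definition of $\varphi_{\mu}$ and, for each fixed direction, set up a one-dimensional Paley--Zygmund estimate. Since $\varphi_\mu(x)=\inf\{\mu(H):H\in\mathcal H(x)\}$, it suffices, for each $\xi\in S^{n-1}$, to lower bound
\[
\mu\bigl(\{z\in\mathbb R^n:\langle z,\xi\rangle\geqslant \langle x,\xi\rangle\}\bigr)
\]
by $(1-\delta^t)^2/C_1^t$, since every supporting half-space of $x$ has this form for some $\xi$. If $\langle x,\xi\rangle\leqslant 0$, Gr\"unbaum's lemma (for the centered measure $\mu$) already gives a bound of $1/e$, which is larger than the claimed bound, so we may assume $\alpha:=\langle x,\xi\rangle>0$.

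Now the hypothesis $x\in\delta Z_t^+(\mu)$ translates, in a fixed direction $\xi$, to $\langle x,\xi\rangle\leqslant \delta\, h_{Z_t^+(\mu)}(\xi)$, i.e.
\[
\alpha^t\leqslant \delta^t\int_{\mathbb R^n}\langle z,\xi\rangle_+^t\,d\mu(z).
\]
Set $Y(z)=\langle z,\xi\rangle_+^t$, viewed as a nonnegative random variable on $(\mathbb R^n,\mu)$. The previous line says $\alpha^t\leqslant \delta^t\,\mathbb E_\mu[Y]$, so
\[
\mu\bigl(\langle z,\xi\rangle\geqslant \alpha\bigr)=\mu\bigl(Y\geqslant \alpha^t\bigr)
\geqslant \mu\bigl(Y\geqslant \delta^t\,\mathbb E_\mu[Y]\bigr).
\]
Now I would apply the Paley--Zygmund inequality, which states that for $\theta\in(0,1)$,
\[
\mathbb P\bigl(Y\geqslant \theta\,\mathbb E[Y]\bigr)\geqslant (1-\theta)^2\frac{(\mathbb E[Y])^2}{\mathbb E[Y^2]},
\]
with $\theta=\delta^t$. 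The key is to control the second moment of $Y$, and this is precisely what inequality \eqref{eq:pz-1} provides: it gives $\mathbb E_\mu[Y^2]=\int\langle z,\xi\rangle_+^{2t}\,d\mu\leqslant C^{2t}(\mathbb E_\mu[Y])^2$ for an absolute constant $C>1$. Substituting into the Paley--Zygmund bound yields
\[
\mu\bigl(\langle z,\xi\rangle\geqslant \alpha\bigr)\geqslant \frac{(1-\delta^t)^2}{C^{2t}},
\]
and setting $C_1:=C^2$ completes the proof in this direction. Taking the infimum over all $\xi\in S^{n-1}$ gives $\varphi_\mu(x)\geqslant (1-\delta^t)^2/C_1^t$, as required.

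There is no serious obstacle: Paley--Zygmund is elementary and the nontrivial input is packaged in \eqref{eq:pz-1}, which compares the $L^{2t}$-norm of $\langle\cdot,\xi\rangle_+$ to the square of its $L^t$-norm; this is the log-concavity/regularity content and is already available from Section~2.3. The only care point is to separate the sign of $\langle x,\xi\rangle$ (using Gr\"unbaum for the negative/zero case) and to record that the Paley--Zygmund inequality requires $\delta^t<1$, which holds since $\delta\in(0,1)$.
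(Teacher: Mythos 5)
Your argument is correct and is essentially identical to the paper's proof: reduce to supporting half-spaces, dispose of the case $\langle x,\xi\rangle\ls 0$ via Gr\"unbaum's lemma, and apply Paley--Zygmund to $g(z)=\langle z,\xi\rangle_+^t$ with $\theta=\delta^t$, using \eqref{eq:pz-1} to control ${\mathbb E}_{\mu}(g^2)$. No differences worth noting.
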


\begin{proof}Let $x\in \delta Z_t^+(\mu )$. As in the proof of Proposition~\ref{prop:body}, using Gr\"{u}nbaum's lemma
we see that it is enough to show that
\begin{equation}\inf\mu (\{z\in {\mathbb R}^n:\langle z,\xi\rangle \gr\langle x,\xi\})\gr\frac{(1-\delta^t)^2}{C_1^t},\end{equation}
where the infimum is over all $\xi\in S^{n-1}$ with $\langle x,\xi\rangle\gr 0$.

Since $x\in \delta Z_t^+(\mu )$, we have $\langle x,\xi\rangle\ls \delta h_{Z_t^+(\mu )}(\xi )$ for any such $\xi\in S^{n-1}$,
so it is enough to show that
\begin{equation}\mu(\{z\in {\mathbb R}^n:\langle z,\xi\rangle \gr \delta h_{Z_t^+(\mu )}(\xi)\})\gr\frac{(1-\delta^t)^2}{C_1^t}.\end{equation}
We apply the Paley-Zygmund inequality
\begin{equation*}\mu (\{z:g(z)\gr \delta^t{\mathbb E}_{\mu }(g)\})\gr
(1-\delta^t)^2\frac{[{\mathbb E}_{\mu }(g)]^2}{{\mathbb E}_{\mu }(g^2)}
\end{equation*} for the function $g(z)=\langle z,\xi\rangle_+^t$. From \eqref{eq:pz-1} we see that
\begin{equation*}{\mathbb E}_{\mu }(g^2)\ls C_1^t\,[{\mathbb E}_{\mu }(g)]^2 \end{equation*} for
some absolute constant $C_1>0$, and the lemma follows. \end{proof}

\begin{definition}\rm For every $t\gr 1$ we consider the convex set
$$R_t(\mu )=\{x\in {\mathbb R}^n:f_{\mu }(x)\gr e^{-t}f_{\mu }(0)\}.$$
The convexity of $R_t(\mu )$ is an immediate consequence of the log-concavity of $f_{\mu }$. Note that
$R_t(\mu )$ is bounded and $0\in {\rm int}(R_t(\mu ))$.
\end{definition}

\begin{lemma}\label{lem:measure-4}For every $t\gr 5n$ we have $R_t(\mu )\supseteq c_0K_{n+1}(\mu )$, where $c_0>0$ is an
absolute constant.
\end{lemma}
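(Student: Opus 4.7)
\medskip

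\noindent\textbf{Plan.} Both sets are convex bodies containing $0$ in their interior, so the inclusion $R_t(\mu)\supseteq c_0K_{n+1}(\mu)$ is equivalent to the pointwise estimate $\varrho_{R_t(\mu)}(\theta)\gr c_0\,\varrho_{K_{n+1}(\mu)}(\theta)$ for every $\theta\in S^{n-1}$. The plan is therefore to fix $\theta$ and work with the one-dimensional log-concave function $g(r):=f_{\mu}(r\theta)/f_{\mu}(0)$, $r\gr 0$, which satisfies $g(0)=1$ and, by Fradelizi's inequality \eqref{eq:frad-2} (using that we may assume $\mu$ is centered by affine invariance), $\|g\|_{\infty}\ls e^n$. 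Write $\rho:=\varrho_{K_{n+1}(\mu)}(\theta)$ and $\rho_{\ast}:=\varrho_{R_t(\mu)}(\theta)$; by the concavity of $\ln g$ the super-level set $\{g\gr e^{-t}\}$ is an interval $[0,\rho_{\ast}]$, and we may assume $\rho_{\ast}<\infty$ (otherwise the inclusion in direction $\theta$ is trivial).

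The next step is to exploit \eqref{eq:radial-Kp} to write
\begin{equation*}
\rho^{n+1}=\int_0^{\infty}(n+1)r^ng(r)\,dr
=\int_0^{\rho_{\ast}}(n+1)r^ng(r)\,dr+\int_{\rho_{\ast}}^{\infty}(n+1)r^ng(r)\,dr,
\end{equation*}
and bound the two pieces separately. On $[0,\rho_{\ast}]$ I would use the crude bound $g\ls e^n$ from Fradelizi, obtaining
$\int_0^{\rho_{\ast}}(n+1)r^ng(r)\,dr\ls e^n\rho_{\ast}^{n+1}$. On $[\rho_{\ast},\infty)$ I would use log-concavity: since $\ln g(0)=0$ and $\ln g(\rho_{\ast})=-t$, concavity of $\ln g$ gives $\ln g(r)\ls -tr/\rho_{\ast}$ for all $r\gr\rho_{\ast}$, hence
\begin{equation*}
\int_{\rho_{\ast}}^{\infty}(n+1)r^ng(r)\,dr\ls \int_{\rho_{\ast}}^{\infty}(n+1)r^ne^{-tr/\rho_{\ast}}\,dr
=\left(\frac{\rho_{\ast}}{t}\right)^{n+1}(n+1)\,\Gamma(n+1,t),
\end{equation*}
where $\Gamma(n+1,t)=\int_t^{\infty}u^ne^{-u}du$ is the upper incomplete gamma function.

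The main (really only) technical step is to observe that for $t\gr 5n$ the tail term is negligible compared to $e^n\rho_{\ast}^{n+1}$. A one-step integration by parts gives $\Gamma(n+1,t)\ls 2t^ne^{-t}$ as soon as $t\gr 2n$, so the tail integral is at most $2(n+1)e^{-t}\rho_{\ast}^{n+1}/t$, which for $t\gr 5n$ is much smaller than $e^n\rho_{\ast}^{n+1}$. Combining,
\begin{equation*}
\rho^{n+1}\ls \bigl(e^n+2(n+1)e^{-t}/t\bigr)\rho_{\ast}^{n+1}\ls 2e^n\rho_{\ast}^{n+1},
\end{equation*}
so that $\rho_{\ast}\gr\rho/(2e^n)^{1/(n+1)}\gr\rho/(2e)$, and one may take $c_0=1/(2e)$. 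The argument is uniform in $\theta$, which completes the proof. The only subtle point I anticipate is making the tail estimate clean enough so that the Fradelizi-type factor $e^n$ (which is forced by the crude $L^{\infty}$ bound on $g$) strictly dominates the tail; this is where the hypothesis $t\gr 5n$ enters, since any constant multiple of $n$ exceeding $e\cdot n$ would in principle suffice.
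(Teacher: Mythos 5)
Your proof is correct, and it takes a more self-contained route than the paper's. The paper fixes a direction $\xi$, invokes Lemma~5.2 from Klartag's CLT paper to get $\int_0^{\varrho_{R_t(\mu)}(\xi)}r^{n-1}h(r)\,dr\gr(1-e^{-t/8})\int_0^{\infty}r^{n-1}h(r)\,dr$, bounds the truncated integral above by $\|f\|_{\infty}\varrho_{R_t(\mu)}(\xi)^n/n$, applies Fradelizi's inequality \eqref{eq:frad-2}, and finally passes from $K_n(\mu)$ to $K_{n+1}(\mu)$ via \eqref{eq:inclusionsKp}. You replace the citation by proving the relevant tail estimate from scratch: log-concavity forces $\ln g(r)\ls -tr/\rho_{\ast}$ beyond $\rho_{\ast}$, and the resulting incomplete-gamma tail $(n+1)(\rho_{\ast}/t)^{n+1}\Gamma(n+1,t)\ls 2(n+1)e^{-t}\rho_{\ast}^{n+1}/t$ is negligible against the Fradelizi term $e^n\rho_{\ast}^{n+1}$ once $t\gr 2n$ (so in particular for $t\gr 5n$). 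Working directly with the $(n+1)$-moment also lets you skip the $K_n(\mu)\approx K_{n+1}(\mu)$ step and yields the explicit constant $c_0=1/(2e)$. Two small points to tidy up: the inequality $\ln g(r)\ls -tr/\rho_{\ast}$ should be justified without assuming $\ln g(\rho_{\ast})=-t$ exactly (take $r'\in(\rho_{\ast},r)$ with $g(r')<e^{-t}$ and let $r'\to\rho_{\ast}^+$; this also covers the case where $\rho_{\ast}$ is the endpoint of the support of $g$), and the bound $\Gamma(n+1,t)\ls 2t^ne^{-t}$ for $t\gr 2n$ needs slightly more than one integration by parts (e.g.\ write $u^ne^{-u}=\bigl(u^ne^{-u/2}\bigr)e^{-u/2}$ and use that $u\mapsto u^ne^{-u/2}$ decreases on $[2n,\infty)$). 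Neither affects the validity of the argument.
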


\begin{proof}Let $t\gr 5n$. Given any $\xi\in S^{n-1}$ consider the log-concave function $h:[0,\infty )\to [0,\infty )$ defined by
$h(t)=f_{\mu }(t\xi )$. From \cite[Lemma~5.2]{Klartag-2007clt} we know that
$$\int_0^{\varrho_{R_t(\mu )}(\xi )}r^{n-1}h(r)dr\gr (1-e^{-t/8})\int_0^{\infty }r^{n-1}h(r)dr.$$
By the definition of $K_n(\mu )$ we have
$$\int_0^{\infty }r^{n-1}h(r)dr=\frac{f_{\mu }(0)}{n}[\varrho_{K_n(\mu )}(\xi )]^n.$$
On the other hand,
$$\int_0^{\varrho_{R_t(\mu )}(\xi )}r^{n-1}h(r)dr\ls \|f\|_{\infty }\int_0^{\varrho_{R_t(\mu )}(\xi )}r^{n-1}dr
=\frac{\|f\|_{\infty }}{n}[\varrho_{R_t(\mu )}(\xi )]^n.$$
Using also the fact that $\|f\|_{\infty }\ls e^nf_{\mu }(0)$ from \eqref{eq:frad-2}, we get
$$e^n[\varrho_{R_t(\mu )}(\xi )]^n\gr (1-e^{-t/8})[\varrho_{K_n(\mu )}(\xi )]^n.$$
This shows that $R_t(\mu )\supseteq c_0K_n(\mu )$, where $c_0>0$ is an
absolute constant. From \eqref{eq:inclusionsKp} we know that $K_n(\mu )\approx K_{n+1}(\mu)$,
and this completes the proof.\end{proof}

\smallskip

Our final lemma compares $Z_t^+(\mu )$ with $K_{n+1}(\mu )$ when $t\gr 5n$.

\begin{lemma}\label{lem:measure-5}For every $t\gr 5n$ we have that $Z_t^+(\mu )\supseteq c_0^{\prime }K_{n+1}(\mu )$, where $c_0^{\prime }>0$ is an
absolute constant.
\end{lemma}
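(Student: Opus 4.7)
The plan is to exploit an identity due to K.~Ball to rewrite the support function of $Z_t^+(\mu)$ as an integral over $K_{n+t}(\mu)$, then to use the inclusion \eqref{eq:inclusionsKp} to bring $K_{n+1}(\mu)$ into the picture, and finally to exploit that $K_{n+1}(\mu)$ is a centered convex body to extract a lower bound via Gr\"unbaum's lemma. By the monotonicity inclusion $(4/e)^{1/t-1/s}Z_t^+(\mu)\subseteq Z_s^+(\mu)$ for $s\gr t$ recalled in Section~2.3, it is enough to prove the statement for $t=5n$. Fix $\xi\in S^{n-1}$. Passing to polar coordinates $x=rw$ in the integral defining $h_{Z_t^+(\mu)}(\xi)^t$ and applying \eqref{eq:radial-Kp} to recognise the radial integral, one obtains the identity
\[
h_{Z_t^+(\mu)}(\xi)^t=\int_{\mathbb R^n}\langle x,\xi\rangle_+^t f_\mu(x)\,dx=f_\mu(0)\int_{K_{n+t}(\mu)}\langle x,\xi\rangle_+^t\,dx.
\]

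From \eqref{eq:inclusionsKp} with $p=n+1$ and $s=n+t$ one has $K_{n+t}(\mu)\supseteq e^{-1}K_{n+1}(\mu)$; restricting the integral and changing variables gives
\[
\int_{K_{n+t}(\mu)}\langle x,\xi\rangle_+^t\,dx\gr e^{-(n+t)}\int_{K_{n+1}(\mu)}\langle y,\xi\rangle_+^t\,dy.
\]
Set $h=h_{K_{n+1}(\mu)}(\xi)$, pick $z\in K_{n+1}(\mu)$ with $\langle z,\xi\rangle=h$, and note that for every $x\in K_{n+1}(\mu)$ with $\langle x,\xi\rangle\gr 0$ the midpoint $\tfrac12(x+z)$ lies in $K_{n+1}(\mu)$ and satisfies $\langle\tfrac12(x+z),\xi\rangle\gr h/2$. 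Since $K_{n+1}(\mu)$ is centered, Gr\"unbaum's lemma gives $|K_{n+1}(\mu)\cap\{\langle\cdot,\xi\rangle\gr 0\}|\gr e^{-1}|K_{n+1}(\mu)|$, whence
\[
|K_{n+1}(\mu)\cap\{\langle\cdot,\xi\rangle\gr h/2\}|\gr (2e)^{-n}|K_{n+1}(\mu)|,
\]
and consequently $\int_{K_{n+1}(\mu)}\langle y,\xi\rangle_+^t\,dy\gr (h/2)^t(2e)^{-n}|K_{n+1}(\mu)|$.

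Combining the estimates and invoking $f_\mu(0)|K_{n+1}(\mu)|\approx 1$ from \eqref{eq:volumeKn+1} yields
\[
h_{Z_t^+(\mu)}(\xi)^t\gr C_0^{-n}\,e^{-t}\,(h/2)^t
\]
for some absolute constant $C_0>0$. Taking $t$-th roots at $t=5n$, the prefactor $C_0^{-n}=C_0^{-t/5}$ is bounded below by an absolute constant, so $h_{Z_{5n}^+(\mu)}(\xi)\gr c_0'\,h_{K_{n+1}(\mu)}(\xi)$ with $c_0'>0$ absolute, which gives the claimed inclusion. The main delicacy is the bookkeeping of the several $c^n$ factors that arise from the volume estimates and the change of variables: each of them must combine into an absolute constant after raising to the $1/t$ power, and this is exactly what the hypothesis $t\gr 5n$ (so that $n/t\ls 1/5$) makes possible.
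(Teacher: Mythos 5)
Your proof is correct, and it takes a genuinely different route from the paper's. The paper deduces the lemma from Lemma~\ref{lem:measure-4}: it restricts the integral $\int\langle x,\xi\rangle_+^t\,d\mu$ to the cap $A_{\xi}=c_0K_{n+1}(\mu)\cap\{\langle\cdot,\xi\rangle\gr m/2\}$, uses the inclusion $c_0K_{n+1}(\mu)\subseteq R_t(\mu)$ (whose proof rests on a lemma of Klartag) to get the pointwise bound $f_{\mu}\gr e^{-t}f_{\mu}(0)$ on that cap, and bounds the cap's volume by the Brunn--Minkowski argument of Proposition~\ref{prop:body}. You instead bypass $R_t(\mu)$ and Lemma~\ref{lem:measure-4} entirely: the polar-coordinate identity $h_{Z_t^+(\mu)}(\xi)^t=f_{\mu}(0)\int_{K_{n+t}(\mu)}\langle x,\xi\rangle_+^t\,dx$ is an exact reformulation (it follows from \eqref{eq:radial-Kp} since $\langle x,\xi\rangle_+^t$ is $t$-homogeneous along rays), and the remaining ingredients --- the inclusion $K_{n+t}(\mu)\supseteq e^{-1}K_{n+1}(\mu)$ from \eqref{eq:inclusionsKp}, Gr\"unbaum's lemma combined with the midpoint map $x\mapsto\tfrac12(x+z)$ on the centered body $K_{n+1}(\mu)$, and \eqref{eq:volumeKn+1} --- are all elementary. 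All the exponential factors ($e^{-n-t}$, $(2e)^{-n}$, $2^{-t}$) survive the $t$-th root because $n/t\ls 1/5$, exactly as you say, and the reduction to $t=5n$ via the monotonicity $Z_s^+(\mu)\supseteq Z_t^+(\mu)$ for $s\gr t$ is legitimate since $Z_t^+(\mu)\ni 0$. The one thing to make explicit is that you are using the standing assumption of Section~\ref{section-4} that $\mu$ is centered: this is needed both for the inclusions \eqref{eq:inclusionsKp} and for $K_{n+1}(\mu)$ to be centered so that Gr\"unbaum applies. Your argument is arguably more self-contained (no appeal to the external lemma behind Lemma~\ref{lem:measure-4}), whereas the paper's choice reuses Lemma~\ref{lem:measure-4}, which it needs anyway in the proof of Theorem~\ref{th:measure} to lower-bound $\mu\bigl(\tfrac{c_1}{2}K_{n+1}(\mu)\bigr)$.
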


\begin{proof} From Lemma~\ref{lem:measure-4} we know that $c_0K_{n+1}(\mu )\subseteq R_t(\mu )$ for all $t\gr 5n$,
where $c_0>0$ is an absolute constant. Let $\xi\in S^{n-1}$ and
set $m:=h_{c_0K_{n+1}(\mu )}(\xi )=c_0h_{K_{n+1}(\mu )}(\xi )$. Define
$$A_{\xi }=c_0K_{n+1}(\mu )\cap \{ x:\langle x,\xi\rangle\gr m/2\}.$$
Since $K_{n+1}(\mu )$ is centered, the proof of Proposition~\ref{prop:body} shows that
$$|A_{\xi}|\gr \frac{|c_0K_{n+1}(\mu)|}{e^2n\cdot 2^n}\gr \frac{|c_0K_{n+1}(\mu)|}{C^n}$$
for some absolute constant $C>c_0$. Moreover, if $x\in A_{\xi }$ then $x\in R_t(\mu )$ and hence $f_{\mu }(x)\gr e^{-t}f_{\mu }(0)$. We write
\begin{align*}
\int_{{\mathbb R}^n}\langle x,\xi\rangle_+^td\mu (x) &\gr \int_{A_{\xi }}\langle x,\xi\rangle_+^td\mu (x)\\
&\gr \left(\frac{m}{2}\right)^te^{-t}f_{\mu }(0)|A_{\xi }|\gr \left(\frac{m}{2e}\right)^t\left(\frac{c_0}{C}\right)^nf_{\mu }(0)|K_{n+1}(\mu )|.
\end{align*}
Using also the fact that $(c_0/C)^n\gr (c_0/C)^t$ because $t\gr 5n$, we get
$$\int_{{\mathbb R}^n}\langle x,\xi\rangle_+^td\mu (x) \gr (c_1m)^tf_{\mu }(0)|K_{n+1}(\mu )|,$$
where $c_1>0$ is an absolute constant. Finally, $f_{\mu }(0)|K_{n+1}(\mu )|\approx 1$ by \eqref{eq:volumeKn+1}, which implies that
$$h_{Z_t^+(\mu )}(\xi )\gr c_2m=c_0^{\prime }h_{K_{n+1}(\mu )}(\xi ),$$
where $c_0^{\prime }=c_2c_0$, and the lemma is proved. \end{proof}

\medskip

\begin{proof}[Proof of Theorem~$\ref{th:measure}$]Combining Lemma~\ref{lem:measure-4} and Lemma~\ref{lem:measure-5}
we see that
$$R_{5n}(\mu)\cap Z_{5n}^+(\mu )\supseteq c_1K_{n+1}(\mu )$$
for some absolute constant $c_1>0$. We apply Lemma~\ref{lem:measure-1} with $t=5n$ and $\delta =\frac{1}{2}$. For every $x\in \frac{1}{2}Z_{5n}^+(\mu )$
we have \begin{equation*}\varphi_{\mu }(x)\gr C_1^{-n}\end{equation*}
for some absolute constant $C_1>1$. It follows that
$$\int_{{\mathbb R}^n}\varphi_{\mu }(x)\,d\mu (x)\gr C_1^{-n}\mu \left(\tfrac{1}{2}Z_{5n}^+(\mu )\right ).$$
Then, by Lemma~\ref{lem:measure-5} we have $\frac{1}{2}Z_{5n}^+(\mu )\supseteq \frac{c_1}{2}K_{n+1}(\mu )$. Since
$\frac{c_1}{2}K_{n+1}(\mu )\subseteq R_{5n}(\mu)$, we know that $f_{\mu }(x)\gr e^{-5n}f_{\mu }(0)$ for all
$x\in \frac{c_1}{2}K_{n+1}(\mu )$. Using also \eqref{eq:volumeKn+1}, we get
\begin{align*}\mu \left(\tfrac{1}{2}Z_{5n}^+(\mu )\right) &\gr \mu \left(\frac{c_1}{2}K_{n+1}(\mu )\right)=\int_{\frac{c_1}{2}K_{n+1}(\mu )}f_{\mu }(x)\,dx\gr e^{-5n}f_{\mu }(0)\left|\frac{c_1}{2}K_{n+1}(\mu )\right|\\
&= e^{-5n}(c_1/2)^nf_{\mu }(0)|K_{n+1}(\mu )|\gr e^{-5n}c_2^n.
\end{align*}
Combining the above we conclude that
$$\int_{{\mathbb R}^n}\varphi_{\mu }(x)\,d\mu (x)\gr C_1^{-n}e^{-5n}c_2^n\gr e^{-cn},$$
for some absolute constant $c>0$.
\end{proof}

\section{Bounds for the expected measure of random polytopes}\label{section-5}

Let $\mu $ be a log-concave probability measure on ${\mathbb R}^n$. Let $X_1,X_2,\ldots $ be independent random points in
${\mathbb R}^n$ distributed according to $\mu $ and for any $N>n$ consider the random polytope $K_N={\rm conv}\{X_1,\ldots ,X_N\}$.
Given a second log-concave probability measure $\nu $ on ${\mathbb R}^n$ with the same barycenter as $\mu $, consider
the expectation ${\mathbb E}_{\mu^N}[\nu (K_N)]$ of the $\nu $-measure of $K_N$. Note that if $T:{\mathbb R}^n\to {\mathbb R}^n$
is an invertible affine transformation and $T_{\ast}\mu $ is the push-forward
of $\mu $ defined by $T_{\ast}\mu (A)=\mu (T^{-1}(A))$ then
$${\mathbb E}_{(T_{\ast }\mu)^N}[(T_{\ast }\nu)(K_N)]={\mathbb E}_{\mu^N}[\nu (K_N)].$$
So, we may assume that $\mu $ is isotropic and $\nu $ is centered. In the
next theorem we actually assume that both $\mu $ and $\nu $ are isotropic.

\begin{theorem}\label{th:upper-threshold}
Let $\mu$ and $\nu $ be isotropic log-concave probability measures on ${\mathbb R}^n$, $n\gr n_0$. For any
$N\ls \exp (c_1n/L_{\nu }^2)$ we have that
$${\mathbb E}_{\mu^N}(\nu (K_N)) \ls 2 \exp\left(-c_2n/L_{\nu }^2\right),$$
where $c_1,c_2>0$ and $n_0\in {\mathbb N}$ are absolute constants.
\end{theorem}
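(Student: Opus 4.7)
The plan is to reduce the theorem to the stronger inequality \eqref{eq:stronger} that was established in the course of Theorem~\ref{th:intro-2}, via an elementary probabilistic comparison which would play the role of Lemma~\ref{lem:upper-1}. For any probability measure $\mu$ on ${\mathbb R}^n$ and any $x\in{\mathbb R}^n$, I would first establish
\begin{equation*}
{\mathbb P}_{\mu^N}(x\in K_N)\ls \min\{1, N\varphi_{\mu}(x)\}.
\end{equation*}
The argument is standard: given any open half-space $H$ with $x\notin\overline{H}$, the inclusion $\{X_1,\dots,X_N\in H\}\subseteq \{K_N\subseteq \overline{H}\}\subseteq \{x\notin K_N\}$ yields ${\mathbb P}_{\mu^N}(x\notin K_N)\gr \mu(H)^N$; taking the supremum over such $H$ and using the definition of $\varphi_{\mu}(x)$ gives ${\mathbb P}_{\mu^N}(x\notin K_N)\gr (1-\varphi_{\mu}(x))^N$, whence ${\mathbb P}_{\mu^N}(x\in K_N)\ls 1-(1-\varphi_{\mu}(x))^N\ls \min\{1, N\varphi_{\mu}(x)\}$.

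Next, I would combine this with the Markov-type inequality $\varphi_{\mu}(x)\ls \exp(-\Lambda_{\mu}^{\ast}(x))$ recalled at the start of Section~\ref{section-3}, obtaining
\begin{equation*}
{\mathbb E}_{\mu^N}[\nu(K_N)] = \int_{{\mathbb R}^n}{\mathbb P}_{\mu^N}(x\in K_N)\,d\nu(x) \ls N\int_{{\mathbb R}^n}e^{-\Lambda_{\mu}^{\ast}(x)}\,d\nu(x).
\end{equation*}
Invoking \eqref{eq:stronger}, which is established in the proof of Theorem~\ref{th:intro-2}, gives $\int_{{\mathbb R}^n}e^{-\Lambda_{\mu}^{\ast}(x)}\,d\nu(x)\ls \exp(-cn/L_{\nu}^2)$ for some absolute constant $c>0$. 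Consequently, for any $N\ls \exp(c_1n/L_{\nu}^2)$ with $c_1<c$, one gets ${\mathbb E}_{\mu^N}[\nu(K_N)]\ls \exp((c_1-c)n/L_{\nu}^2)$, and a choice such as $c_1=c/2$, $c_2=c/2$ produces the claimed bound (the factor $2$ on the right-hand side being harmless slack).

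The main obstacle is essentially nil once one has Lemma~\ref{lem:upper-1} (a standard upper-threshold device going back to \cite{DFM} and also used in \cite{Chakraborti-Tkocz-Vritsiou-2021}) together with the already proved integral estimate \eqref{eq:stronger}. A slightly more refined variant would split $\min\{1,Ne^{-\Lambda_{\mu}^{\ast}}\}\ls \mathbf{1}_{B_{t_0}(\mu)}+Ne^{-\Lambda_{\mu}^{\ast}}\mathbf{1}_{B_{t_0}(\mu)^c}$ for some threshold $t_0\gr \ln N$ and reproduce the term-by-term analysis of Theorem~\ref{th:intro-2}, using $\nu(B_{t_0}(\mu))\ls L_{\nu}^n|B_{t_0}(\mu)|$ (by isotropicity of $\nu$), the volume bound $|B_{t}(\mu)|^{1/n}\ls c\sqrt{t/n}$ from Proposition~\ref{prop:fact-4} and Theorem~\ref{th:fact-1}, and the monotonicity of $t\mapsto t^{n/2}e^{-t}$ on $[0,n/2]$; this adds no essential difficulty and may be preferred in order to optimise the constant $c_1$ or to accommodate other normalisations (see the variants discussed in Section~\ref{section-5} around Theorem~\ref{th:variant} and the recovery of \eqref{eq:tk-1}).
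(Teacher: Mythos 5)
Your argument is correct, but it is organized differently from the paper's. The paper proves Theorem~\ref{th:upper-threshold} through Lemma~\ref{lem:upper-1}, which splits ${\mathbb E}_{\mu^N}(\nu(K_N))\ls \nu(B_t(\mu))+Ne^{-t}$, and then bounds $\nu(B_t(\mu))\ls \|f_{\nu}\|_{\infty}|B_t(\mu)|$ using Proposition~\ref{prop:fact-4} and Theorem~\ref{th:fact-1}, finally optimizing over $t\approx n/L_{\nu}^2$; note that in this decomposition the ``inside'' term $\nu(B_t(\mu))$ carries no factor of $N$. You instead apply the pointwise bound ${\mathbb P}_{\mu^N}(x\in K_N)\ls N\varphi_{\mu}(x)\ls Ne^{-\Lambda_{\mu}^{\ast}(x)}$ globally, integrate, and invoke the already-established inequality \eqref{eq:stronger} from the proof of Theorem~\ref{th:intro-2}. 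This is legitimate: \eqref{eq:stronger} is proved in Section~\ref{section-3} under exactly the hypotheses you need ($\mu,\nu$ isotropic, $n\gr n_0$), and since $N\ls\exp(c_1n/L_{\nu}^2)$ with $c_1$ strictly smaller than the constant $c$ in \eqref{eq:stronger}, the extra factor of $N$ is absorbed and yields $\exp(-c_2n/L_{\nu}^2)$ with $c_2=c-c_1>0$. (Your derivation of ${\mathbb P}_{\mu^N}(x\in K_N)\ls 1-(1-\varphi_{\mu}(x))^N$ via open half-spaces needs the small limiting remark that a closed half-space with $x$ on its boundary is approximated from inside its complement, but this is routine; the paper's union-bound version in Lemma~\ref{lem:upper-1} avoids even this.) What your route buys is economy: the threshold theorem becomes an immediate corollary of the depth estimate, with no need to redo the $B_t(\mu)$ volume analysis. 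What the paper's route buys is a self-contained Lemma~\ref{lem:upper-1} in the standard form of \cite{DFM} and \cite{Chakraborti-Tkocz-Vritsiou-2021}, which keeps the $N$-independent term $\nu(B_t(\mu))$ separate -- the shape one needs for the variants in Remark~\ref{rem:variants-5} and Theorem~\ref{th:upper-threshold-variant} -- and marginally better absolute constants; your own ``refined variant'' at the end is essentially the paper's proof. Either way the statement follows with absolute constants, so there is no gap.
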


The proof of Theorem~\ref{th:upper-threshold} will exploit the same tools that were used in the previous section, combined with
a variant of the standard lemma that is used in order to establish upper thresholds.
Recall that $B_t(\mu)=\{v\in{\mathbb R}^n:\Lambda^{\ast}_{\mu}(v)\ls t\}$, where $\Lambda^{\ast }_{\mu
}$ is the Cram\'{e}r transform of $\mu $. A version of the next lemma appeared
originally in \cite{DFM}.

\begin{lemma}\label{lem:upper-1}Let $t>0$. For every $N>n$ we have
$${\mathbb E}_{\mu^N}(\nu (K_N))\ls \nu (B_t(\mu ))+ N\exp (-t).$$
\end{lemma}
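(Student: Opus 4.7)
The plan is a direct Fubini-based argument. First, I would rewrite
$${\mathbb E}_{\mu^N}(\nu (K_N))=\int_{{\mathbb R}^n}{\mathbb P}_{\mu^N}(x\in K_N)\,d\nu (x)$$
and split this integral according to whether $x\in B_t(\mu )$ or not. On $B_t(\mu )$ I use the trivial bound ${\mathbb P}_{\mu^N}(x\in K_N)\ls 1$, which contributes at most $\nu (B_t(\mu ))$; on the complement I need a tail estimate of the form ${\mathbb P}_{\mu^N}(x\in K_N)\ls N\exp(-t)$.

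The key pointwise inequality is ${\mathbb P}_{\mu^N}(x\in K_N)\ls N\varphi_{\mu }(x)$ for every $x\in {\mathbb R}^n$. Indeed, if $x\in K_N=\conv\{X_1,\ldots ,X_N\}$ and $H\in {\cal H}(x)$ is any half-space containing $x$, then writing $x$ as a convex combination $\sum_i\lambda_iX_i$ and using $x\in H$ forces at least one $X_i$ with $\lambda_i>0$ to lie in $H$. By the union bound,
$${\mathbb P}_{\mu^N}(x\in K_N)\ls {\mathbb P}_{\mu^N}(\exists\,i:X_i\in H)\ls N\mu (H),$$
and taking the infimum over $H\in {\cal H}(x)$ gives ${\mathbb P}_{\mu^N}(x\in K_N)\ls N\varphi_{\mu }(x)$.

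Finally, I would invoke the Chernoff-type bound $\varphi_{\mu }(x)\ls \exp(-\Lambda_{\mu }^{\ast }(x))$ already recorded at the beginning of the proof of Theorem~\ref{th:intro-2}. For $x\notin B_t(\mu )$ this yields $\varphi_{\mu }(x)\ls e^{-t}$, and combining the two estimates,
$${\mathbb E}_{\mu^N}(\nu (K_N))\ls \nu (B_t(\mu ))+\int_{{\mathbb R}^n\setminus B_t(\mu )}N\varphi_{\mu }(x)\,d\nu (x)\ls \nu (B_t(\mu ))+Ne^{-t},$$
as required. No serious obstacle is expected here: the argument only uses Fubini, the union bound, the definition of $\varphi_{\mu }$, and a Chebyshev-type estimate already in hand. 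The content of the lemma is really just a clean packaging of the simultaneous use of the half-space depth bound (valid for any $x$) and the Cram\'er transform tail (which becomes powerful once $x$ is far from the origin in the $\Lambda_{\mu }^{\ast }$-sense).
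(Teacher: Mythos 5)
Your proof is correct and matches the paper's argument essentially step for step: the same split at $B_t(\mu)$, the same pointwise bound $\mu^N(x\in K_N)\ls N\varphi_\mu(x)$ via a union bound over half-spaces, and the same use of the Chernoff estimate $\varphi_\mu(x)\ls e^{-\Lambda_\mu^\ast(x)}$ on the complement. The only superficial difference is that you apply Fubini at the outset and justify the key pointwise inequality via a convex-combination argument rather than the paper's contrapositive ``$K_N$ lies in the complementary half-space,'' which are two phrasings of the same observation.
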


\begin{proof}We write
\begin{align*}{\mathbb E}_{\mu^N}(\nu (K_N)) &={\mathbb E}_{\mu^N}(\nu (K_N\cap B_t(\mu)))+{\mathbb E}_{\mu^N}(\nu (K_N\setminus B_t(\mu )))\\
&\ls \nu (B_t(\mu))+{\mathbb E}_{\mu^N}(\nu (K_N\setminus B_t(\mu ))).\end{align*}
Observe that if $H$ is a closed half-space containing $x$, and if $x\in K_N$, then there exists
$i\ls N$ such that $X_i\in H$ (otherwise we would have $x\in K_N\subseteq H^\prime$, where $H^\prime$ is the
complementary half-space). It follows that
$$\mu^N\bigl(x\in K_N\bigr) \ls N\varphi_{\mu }(x).$$
Then, Fubini's theorem shows that
$${\mathbb E}_{\mu^N}(\nu (K_N\setminus B_t(\mu )))=\int_{{\mathbb R}^n\setminus
B_t(\mu )}\mu^N(x\in K_N)\,d\nu (x)\ls
\int_{{\mathbb R}^n\setminus B_t(\mu )}N\varphi_{\mu }(x)\,d\nu (x)\ls N\,e^{-t}$$
because $\varphi_{\mu }(x)\ls \exp(-\Lambda_{\mu}^{\ast }(x))\ls e^{-t}$ for all
$x\notin B_t(\mu )$.\end{proof}

\smallskip

\begin{proof}[Proof of Theorem~$\ref{th:upper-threshold}$]
Using the estimate $\nu (B_t(\mu ))\ls \|f_{\nu }\|_{\infty }|B_t(\mu )|$, Proposition~\ref{prop:fact-4}
and Theorem~\ref{th:fact-1}, from Lemma~\ref{lem:upper-1} we get
$${\mathbb E}_{\mu^N}(\nu (K_N))\ls \left(c_1\|f_{\nu }\|_{\infty }^{1/n}\sqrt{t/n}\right)^n+ N\exp (-t)$$
for every $N>n$ and $2\ls t\ls n$. Recall that $\nu $ is isotropic, therefore $\|f_{\nu }\|_{\infty }^{2/n}=L_{\nu }^2=O(\sqrt{n})$;
in fact, Klartag's estimate for $L_n$ gives much more. Then, if $n\gr n_0$ where $n_0\in{\mathbb N}$ is an absolute constant,
the choice $t:=(c_1e)^{-2}n/\|f_{\nu }\|_{\infty }^{2/n}$ satisfies $2\ls t\ls n$ and gives
$$\left(c_1\|f_{\nu }\|_{\infty }^{1/n}\sqrt{t/n}\right)^n\ls e^{-n}.$$ Therefore,
$${\mathbb E}_{\mu^N}(\nu (K_N))\ls e^{-n}+ N\exp (-c_2n/\|f_{\nu }\|_{\infty }^{2/n}),$$
where $c_2=(c_1e)^{-2}$. It follows that if $N\ls \exp (c_3n/\|f_{\nu }\|_{\infty }^{2/n})$ where $c_3=c_2/2$, then we have
$${\mathbb E}_{\mu^N}(\nu (K_N))\ls e^{-n}+\exp (-c_3n/\|f_{\nu }\|_{\infty }^{2/n})$$
and the result follows from the fact that $\|f_{\nu }\|_{\infty }^{2/n}=L_{\nu }^2\gr c$. \end{proof}

\begin{remark}\label{rem:variants-5}\rm Let $\mu $ be isotropic. For the proof
of Theorem~\ref{th:upper-threshold}, what we actually need about $\nu $ is that $\nu $ is centered and
that $\|f_{\nu }\|_{\infty }^{1/n}=o_n(\sqrt{n})$. Then the argument of the previous proof
gives
$${\mathbb E}_{\mu^N}(\nu (K_N))\ls \exp (-c_2n/\max\{1,\|f_{\nu }\|_{\infty }^{2/n}\})$$
if $N\ls \exp (c_1n/\|f_{\nu }\|_{\infty }^{2/n})$. Note that the proof of \eqref{eq:tk-1}
in \cite{Chakraborti-Tkocz-Vritsiou-2021} exploits the same ideas. The role of $\nu $ is played
by the uniform measure on a convex body $K$, therefore $\|f_{\nu }\|_{\infty }=\frac{1}{|K|}$.
On the other hand, $\mu $ is isotropic and supported on $K$, and hence
$$|K|\cdot L_{\mu }^n\gr \int_Kf_{\mu }(x)dx=\mu (K)=1.$$
This shows that $\|f_{\nu }\|_{\infty }\ls L_{\mu }^n$, therefore $n/\|f_{\nu }\|_{\infty}^{\frac{2}{n}}\gr n/L_{\mu }^2$,
which (combined with the above) proves \eqref{eq:tk-1}.

A second possible normalization is to assume that $\mu $ and $\nu $ are simply centered and that $\|f_{\mu }\|_{\infty }=
\|f_{\nu}\|_{\infty }$. Then, starting the computation as in the proof of Theorem~\ref{th:upper-threshold} we get
\begin{align*}
{\mathbb E}_{\mu^N}(\nu (K_N)) &\ls \left(c_1\|f_{\nu }\|_{\infty }^{1/n}[\det {\rm Cov}(\mu)]^{\frac{1}{2n}}\sqrt{t/n}\right)^n+ N\exp (-t)\\
&= \left(c_1\|f_{\mu }\|_{\infty }^{1/n}[\det {\rm Cov}(\mu)]^{\frac{1}{2n}}\sqrt{t/n}\right)^n+ N\exp (-t)=
\left(c_1L_{\mu }\sqrt{t/n}\right)^n+ N\exp (-t).
\end{align*}
Choosing $t=(c_1e)^{-2}n/L_{\mu }^2$ and continuing as above, we get:
\end{remark}

\begin{theorem}\label{th:upper-threshold-variant}
Let $\mu$ and $\nu $ be two centered log-concave probability measures on ${\mathbb R}^n$
with $\|f_{\mu }\|_{\infty }=\|f_{\nu }\|_{\infty }$. For any
$N\ls \exp (c_1n/L_{\mu }^2)$ we have that
$${\mathbb E}_{\mu^N}(\nu (K_N)) \ls 2 \exp\left(-c_2n/L_{\mu }^2\right),$$
where $c_1,c_2>0$ are absolute constants.
\end{theorem}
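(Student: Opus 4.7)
The plan is to mimic the proof of Theorem~\ref{th:upper-threshold}, using the hypothesis $\|f_\mu\|_\infty=\|f_\nu\|_\infty$ in place of the isotropicity of $\nu$. Starting from Lemma~\ref{lem:upper-1},
$${\mathbb E}_{\mu^N}(\nu(K_N))\ls \nu(B_t(\mu))+Ne^{-t},$$
I would bound $\nu(B_t(\mu))\ls \|f_\nu\|_\infty\,|B_t(\mu)|$, and then estimate $|B_t(\mu)|$ via Proposition~\ref{prop:fact-4} and Theorem~\ref{th:fact-1} applied to $\mu$: for any $2\ls t\ls n$,
$$|B_t(\mu)|^{1/n}\ls c\sqrt{t/n}\,[\det\mathrm{Cov}(\mu)]^{\frac{1}{2n}}.$$

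The crucial step is the algebraic identity provided by the hypothesis. By the definition \eqref{eq:L} of the isotropic constant and the equality of the densities at their maxima,
$$\|f_\nu\|_\infty^{1/n}\,[\det\mathrm{Cov}(\mu)]^{\frac{1}{2n}}=\|f_\mu\|_\infty^{1/n}\,[\det\mathrm{Cov}(\mu)]^{\frac{1}{2n}}=L_\mu.$$
Raising the previous inequality to the $n$-th power and combining the two bounds yields
$${\mathbb E}_{\mu^N}(\nu(K_N))\ls (cL_\mu\sqrt{t/n})^n+Ne^{-t},$$
which is exactly the inequality stated in Remark~\ref{rem:variants-5}.

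It remains to optimize in $t$. I would take $t:=(ce)^{-2}n/L_\mu^2$. The universal lower bound $L_\mu\gr c_0$, used already in the proof of Theorem~\ref{th:intro-2}, guarantees $t\ls n$; and $t\gr 2$ is automatic once $n\gr n_0$ for a suitable absolute $n_0$, again thanks to $L_\mu\gr c_0$. With this choice the first summand is at most $e^{-n}$. Taking $c_1:=(ce)^{-2}/2$ in the hypothesis $N\ls \exp(c_1n/L_\mu^2)$ gives $Ne^{-t}\ls \exp(-c_1n/L_\mu^2)$, and the two contributions combine to at most $2\exp(-c_2n/L_\mu^2)$, as required.

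There is essentially no obstacle: the argument is formally that of Theorem~\ref{th:upper-threshold}, with the cosmetic but decisive replacement that the equal-density normalization turns $\|f_\nu\|_\infty\,[\det\mathrm{Cov}(\mu)]^{1/2}$ into $L_\mu^n$ rather than $L_\nu^n$. The only thing one has to verify carefully is the range $2\ls t\ls n$ for Theorem~\ref{th:fact-1}, which reduces to the standard lower bound on the isotropic constant.
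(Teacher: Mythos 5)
Your proof follows exactly the route the paper sketches in Remark~\ref{rem:variants-5}: bound $\nu(B_t(\mu))\ls\|f_\nu\|_\infty|B_t(\mu)|$, use Proposition~\ref{prop:fact-4} and Theorem~\ref{th:fact-1} to control $|B_t(\mu)|$, and exploit $\|f_\nu\|_\infty=\|f_\mu\|_\infty$ so that $\|f_\nu\|_\infty^{1/n}[\det\mathrm{Cov}(\mu)]^{1/(2n)}=L_\mu$, then take $t\asymp n/L_\mu^2$. One small slip: the lower bound $L_\mu\gr c_0$ does ensure $t\ls n$, but $t\gr 2$ needs an \emph{upper} bound on $L_\mu$ (e.g.\ $L_\mu^2=O(\sqrt{n})$, as invoked in the proof of Theorem~\ref{th:upper-threshold}), not $L_\mu\gr c_0$ again; this is why an $n\gr n_0$ hypothesis is implicitly required, exactly as in the isotropic case.
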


We pass now to the lower threshold. It is useful to observe that in the case where $X_1,X_2,\ldots $
are uniformly distributed in the Euclidean unit ball the sharp threshold for the problem (see \cite{Pivovarov-2007} and \cite{Bonnet-Chasapis-Grote-Temesvari-Turchi-2019}) is
$$\exp \left((1\pm \epsilon )\tfrac{1}{2}n\ln n\right),\qquad \epsilon >0.$$
We concentrate in the case $\nu =\mu $ of our problem, in which case we shall establish a weak lower threshold of this order.
The precise formulation of our result is the following.

\begin{theorem}\label{th:lower-threshold}Let $\delta\in (0,1)$. Then,
$$\inf_{\mu }\Big(\inf\Big\{ {\mathbb E}_{\mu^N}\big[\mu ((1+\delta )K_N)\big]: N\gr \exp \big (C\delta^{-1}\ln \left(2/\delta \right)n\ln n\big )\Big\}\Big)\longrightarrow 1$$ as $n\to\infty $, where the first infimum is over all centered log-concave probability measures $\mu $ on ${\mathbb R}^n$ and $C>0$
is an absolute constant.
\end{theorem}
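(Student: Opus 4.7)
Fix $\delta\in(0,1)$. By the affine-invariance observation at the start of Section~\ref{section-5} we may assume $\mu$ is centered. The plan combines three ingredients: (i) the DFM-style inclusion lemma (Lemma~\ref{lem:inclusion}), which via an $\epsilon$-net on $S^{n-1}$ converts a lower bound $p$ for the $\mu$-mass of certain supporting half-spaces of a convex body $A$ into an estimate of the form $\mathbb{P}_{\mu^N}((1+\delta)K_N\supseteq A)\gr 1-|\mathcal{N}|(1-p)^N$; (ii) the Paley--Zygmund argument behind Lemma~\ref{lem:measure-1}, which supplies such a $p$ on dilates of $Z_t^{+}(\mu)$; (iii) a Markov + net estimate concentrating $\mu$ inside a slight inflation of $Z_t^{+}(\mu)$.

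\textbf{The candidate set and its $\mu$-mass.} Set $C_\delta:=C_0\delta^{-1}\ln(2/\delta)$ with $C_0>0$ a large absolute constant to be fixed, take $t\gr C_\delta n\ln n$, and put $A:=(1+\delta/2)Z_t^{+}(\mu)$. For each $\xi\in S^{n-1}$, Markov's inequality applied to $\langle\,\cdot\,,\xi\rangle_{+}^{t}$ gives
$$\mu\bigl(\langle\,\cdot\,,\xi\rangle>(1+\delta/4)\,h_{Z_t^{+}(\mu)}(\xi)\bigr)\ls (1+\delta/4)^{-t}.$$
A union bound over a $(c\delta)$-net $\mathcal{N}\subset S^{n-1}$ of cardinality at most $(C/\delta)^n$, together with the standard Lipschitz estimate for support functions (the slack between $1+\delta/4$ and $1+\delta/2$ absorbs the net error), yields
$$\mu(\mathbb{R}^n\setminus A)\ls (C/\delta)^n(1+\delta/4)^{-t}\ls \exp\bigl(n\ln(C/\delta)-c\delta t\bigr)\ls e^{-n}$$
once $C_0$ is large enough; this step is what pins down the form of $C_\delta$.

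\textbf{Depth bound and inclusion lemma.} The event $A\subseteq(1+\delta)K_N$ is equivalent to $h_{K_N}(\xi)\gr h_A(\xi)/(1+\delta)$ for every $\xi\in S^{n-1}$; writing this as $h_{K_N}(\xi)\gr\rho\,h_{Z_t^{+}(\mu)}(\xi)$ with $\rho:=(1+\delta/2)/(1+\delta)+c_0\delta=1-c_1\delta\in(0,1)$ (the small positive term $c_0\delta$ absorbing the net Lipschitz error), the $\epsilon$-net mechanism behind Lemma~\ref{lem:inclusion} reduces matters to finding, for each $\xi$ in the net, a sample $X_i$ with $\langle X_i,\xi\rangle\gr\rho\,h_{Z_t^{+}(\mu)}(\xi)$. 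For one fixed $\xi$, the Paley--Zygmund step from the proof of Lemma~\ref{lem:measure-1} (applied with parameter $\rho<1$) delivers
$$\mu\bigl(\langle\,\cdot\,,\xi\rangle_{+}\gr\rho\,h_{Z_t^{+}(\mu)}(\xi)\bigr)\gr\frac{(1-\rho^{t})^{2}}{C_1^{t}}\gr p:=\tfrac{1}{2C_1^{t}}=e^{-c_2 t},$$
because $\rho^{t}\ls e^{-c_1\delta t}$ is negligible for our $t$. Feeding this into Lemma~\ref{lem:inclusion} gives $\mathbb{P}_{\mu^N}((1+\delta)K_N\supseteq A)\gr 1-(C/\delta)^n(1-p)^N\gr 1-e^{-n}$ provided $N\gr\exp(C\delta^{-1}\ln(2/\delta)\,n\ln n)$ with $C$ sufficiently large (so that $Np\gr 2n\ln(C/\delta)$). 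Combined with the mass bound,
$$\mathbb{E}_{\mu^N}[\mu((1+\delta)K_N)]\gr \mu(A)\bigl(1-e^{-n}\bigr)\longrightarrow 1$$
as $n\to\infty$, uniformly in $\mu$.

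\textbf{Main obstacle.} The delicate point is the simultaneous calibration of three competing scalings: the Paley--Zygmund parameter $\rho<1$ (which forces work with threshold strictly below $h_{Z_t^{+}(\mu)}(\xi)$), the Markov parameter $1+\delta/4>1$ (which controls mass strictly above that same threshold), and the outer $(1+\delta)$-inflation in $(1+\delta)K_N$ which, together with the net Lipschitz error, is exactly what lets the two opposite regimes overlap. The factor $\delta^{-1}\ln(2/\delta)$ in $C_\delta$ is the balance $n\ln(C/\delta)\asymp\delta t$ that appears in the Markov + net estimate on $\mu(A)$; the extra $\ln n$ factor in the theorem's threshold is needed so that the depth bound $p=e^{-c_2 t}$ combined with the refined net cardinality arising from the support-function Lipschitz bound (which involves $\max_i|X_i|\ls C\ln N$) is compatible with the allowed $N\gr\exp(C_\delta n\ln n)$.
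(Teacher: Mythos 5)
Your architecture is exactly the paper's: concentrate $\mu$ on a slight dilation of $Z_t^{+}(\mu)$ by Markov plus a union bound over a net (this is Proposition~\ref{prop:muZ}), get a uniform depth lower bound $e^{-ct}$ on a slight contraction of $Z_t^{+}(\mu)$ by Paley--Zygmund (Lemma~\ref{lem:measure-1}), and combine via Lemma~\ref{lem:inclusion}, with $t\approx \delta^{-1}\ln(2/\delta)\,n\ln n$. Two calibration points in your write-up are wrong, though, and one of them is a genuine gap as written.

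The gap is in the mass bound for $A=(1+\delta/2)Z_t^{+}(\mu)$. A $(c\delta)$-net of $S^{n-1}$ is far too coarse: the body $Z_t^{+}(\mu)$ satisfies only $b_1B_2^n\subseteq Z_t^{+}(\mu)\subseteq b_2tB_2^n$ (after reducing to the \emph{isotropic}, not merely centered, position --- another point you need), so for $|\xi-\xi'|\ls\eta$ the support function can vary by as much as $b_2t\eta$ while its minimum is only $b_1$. To make the net error a $\delta$-fraction of $h_{Z_t^{+}(\mu)}(\xi)$ you need mesh $\eta\approx\delta/t$, hence cardinality $(Ct/\delta)^n$ rather than $(C/\delta)^n$; moreover, since the point $x$ in the bad event is unbounded, "the standard Lipschitz estimate" must be replaced by the self-improving estimate on $\alpha_\mu(x)=\max_w \langle x,w\rangle_+/h_{Z_t^{+}(\mu)}(w)$ used in the proof of Proposition~\ref{prop:muZ}. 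With the corrected cardinality, the balance $n\ln(Ct/\delta)\ls c\delta t$ is precisely what forces $t\gtrsim \delta^{-1}n\ln n$ --- so the $\ln n$ in the threshold originates in this mass-concentration step, not, as you claim, in a Lipschitz control of $h_{K_N}$ via $\max_i|X_i|$. That latter issue does not arise at all if you apply Lemma~\ref{lem:inclusion} as stated: it is a depth lemma for an arbitrary Borel set $A$ with the prefactor $2\binom{N}{n}$, and requires no net on the sphere and no bound on the diameter of $K_N$. Your description of "the $\epsilon$-net mechanism behind Lemma~\ref{lem:inclusion}" misstates how that lemma works, and if you really did run the inclusion step through a net you would have to control the random quantity $\max_i|X_i|$, which your sketch does not do. Both defects are repairable and the final threshold survives the repairs, but as written the mass estimate does not follow from the stated net, and the accounting of where $n\ln n$ comes from is inverted.
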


This is a weak threshold in the sense that we consider the expected measure of $(1+\delta )K_N$ instead of $K_N$, where
$\delta >0$ is arbitrarily small. The reason for this is the dependence on $\delta $ in the next technical
proposition.

\begin{proposition}\label{prop:muZ}Let $\mu $ be an isotropic log-concave probability measure on ${\mathbb R}^n$. For any $\delta\in (0,1)$
and any $t\gr C_{\delta }n\ln n$ we have that $$\mu ((1+\delta )Z_t^+(\mu ))\gr 1-e^{-c_{\delta }t}$$
where $C_{\delta }=C\delta^{-1}\ln\left(2/\delta\right)$ and $c_{\delta }=c\delta $ are positive
constants depending only on $\delta $.
\end{proposition}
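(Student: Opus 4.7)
Setting $K:=Z_t^+(\mu)$, the plan is to establish $\mu({\mathbb R}^n\setminus (1+\delta)K)\ls e^{-c\delta t}$ via a net argument combined with Paouris' concentration inequality. By duality $x\notin (1+\delta)K$ if and only if there exists a witness $y\in K^{\circ}$ with $\langle x,y\rangle >1+\delta$; and for every fixed $y\in K^{\circ}$, Markov's inequality applied to the function $z\mapsto \langle z,y\rangle_+^t$ yields
\begin{equation*}
\mu(\{x:\langle x,y\rangle>s\})\ls s^{-t}\int_{{\mathbb R}^n}\langle z,y\rangle_+^t\,d\mu(z)=s^{-t}h_K(y)^t\ls s^{-t}
\end{equation*}
for any $s>0$, since $h_K(y)\ls 1$ on $K^{\circ}$. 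Taking $s=1+\delta/2$ produces the bound $e^{-\delta t/4}$ for a single direction; the task is to handle all possible witnesses $y$ simultaneously.

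To set up the net, I would first invoke Paouris' concentration inequality for isotropic log-concave measures to choose $R:=c_2\delta t$ so that $\mu(\{|x|>R\})\ls e^{-\delta t}$; the hypothesis $t\gr n\ln n$ together with $\delta\in (0,1)$ guarantees $R\gr c_1\sqrt{n}$, where Paouris' bound is effective. Next, since $\mu$ is isotropic and $t\gr 2$, the inclusions recorded in Section~\ref{section-2} give $K\supseteq Z_2^+(\mu)\supseteq c_0 B_2^n$, hence $K^{\circ}\subseteq c_0^{-1}B_2^n$, and a standard volumetric estimate produces an $\eta$-net $\mathcal{N}$ of $K^{\circ}$ in the Euclidean metric with $|\mathcal{N}|\ls (3/(c_0\eta))^n$. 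Set $\eta:=\delta/(2R)$; then for every $x$ with $|x|\ls R$ and every witness $y\in K^{\circ}$ satisfying $\langle x,y\rangle>1+\delta$, picking $y'\in \mathcal{N}$ with $|y-y'|\ls\eta$ gives $\langle x,y'\rangle\gr (1+\delta)-\eta R=1+\delta/2$. A union bound over $\mathcal{N}$ combined with the Markov estimate above, together with the Paouris tail, then yields
\begin{equation*}
\mu({\mathbb R}^n\setminus (1+\delta)K)\ls |\mathcal{N}|(1+\delta/2)^{-t}+e^{-\delta t}\ls (C't)^n e^{-\delta t/4}+e^{-\delta t},
\end{equation*}
using $\ln(1+\delta/2)\gr \delta/4$ and the identity $3/(c_0\eta)=6R/(c_0\delta)=6c_2t/c_0$.

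The main obstacle is then to verify that $(C't)^n\ls e^{\delta t/8}$, i.e. that $n\ln(C't)\ls \delta t/8$, under the hypothesis $t\gr C_{\delta}n\ln n$ with $C_{\delta}=C\delta^{-1}\ln(2/\delta)$. This holds because $\delta t\gr C\ln(2/\delta)\,n\ln n$, while $\ln(C't)=O(\ln n+\ln(1/\delta))=O(\ln(2/\delta)\ln n)$ for $n$ large, so choosing the absolute constant $C$ in the definition of $C_\delta$ sufficiently large closes the estimate. The $\ln(2/\delta)$ factor in $C_{\delta}$ is exactly what is needed to absorb the $\ln(1/\delta)$ contribution to $\ln t$ when $\delta$ is small, and the delicate point of the proof is this simultaneous balancing of $R$, $\eta$ and $t$ so that the net size fits within the budget provided by Markov's estimate. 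The conclusion is $\mu((1+\delta)Z_t^+(\mu))\gr 1-e^{-c\delta t}$ with $c_{\delta}=c\delta$ for an absolute constant $c>0$, as required.
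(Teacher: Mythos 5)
Your proof is correct, and it reaches the stated bound by a route that differs from the paper's in how the net is exploited. The paper also runs a net--plus--Markov--plus--union-bound argument, but it nets the unit sphere $S^{n-1}$ with mesh $\epsilon/(bt)$ and then proves a deterministic approximation statement: the polytope $W$ cut out by the net directions satisfies $(1-\epsilon)W\subseteq Z_t^+(\mu)$, which is established by a successive-approximation argument using the two-sided sandwiching $b_1B_2^n\subseteq Z_t^+(\mu)\subseteq b_2tB_2^n$ to control the Lipschitz constant of $h_{Z_t^+(\mu)}$. You instead net the polar body $K^{\circ}\subseteq c_0^{-1}B_2^n$ and make the transfer from an arbitrary witness $y\in K^{\circ}$ to a net point trivial (one Cauchy--Schwarz) by first truncating the spatial variable to $|x|\ls R=c_2\delta t$ via Paouris' deviation inequality. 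What your version buys is a shorter geometric step; what it costs is the importation of Paouris' inequality as a black box (the paper's proof needs only the elementary inclusions of Section~2.3), plus the need to check that $\delta t\gr\sqrt{n}$ so that Paouris applies at level $e^{-\delta t}$ --- which your hypothesis $t\gr C\delta^{-1}\ln(2/\delta)\,n\ln n$ does supply. In both proofs the final balancing is identical: a net of cardinality $(C^{\prime}t)^n$ against a per-direction Markov bound $(1+c\delta)^{-t}$, with the factor $\ln(2/\delta)$ in $C_{\delta}$ absorbing the $\ln(1/\delta)$ contribution to $\ln t$ at the threshold.

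Two small points you should tighten. First, your net points must lie in $K^{\circ}$ itself for the bound $h_K(y^{\prime})\ls 1$ to apply; taking a maximal $\eta$-separated subset of $K^{\circ}$ gives this together with the volumetric cardinality bound. Second, the inequality $n\ln(C^{\prime}t)\ls\delta t/8$ must hold for \emph{all} $t\gr C_{\delta}n\ln n$, not just at the threshold; this follows because $t\mapsto\delta t/8-n\ln(C^{\prime}t)$ is increasing for $t\gr 8n/\delta$, which the threshold guarantees --- the same monotonicity observation the paper makes with its function $\ell(t)$. Neither point is a gap, only a matter of writing the verification out.
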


\begin{proof}Let $\delta\in (0,1)$ and set $\epsilon =\delta /5$. Fix $t\gr n$ which will be determined. Recall that $b_1B_2^n\subseteq Z_t^+(\mu )\subseteq b_2tB_2^n$ for some absolute constants $b_1,b_2>0$. This implies that if $v,w\in S^{n-1}$ and $|v-w|\ls \frac{b_1\epsilon }{b_2t}$ then
$$h_{Z_t^+(\mu )}(v-w)\ls b_2t|v-w|\quad\hbox{and}\quad b_1\ls \min\{h_{Z_t^+(\mu )}(v),h_{Z_t^+(\mu )}(w)\},$$
therefore
\begin{equation}\label{muZ-1}h_{Z_t^+(\mu )}(v-w)\ls b_2t|v-w|\ls \epsilon\min\{h_{Z_t^+(\mu )}(v),h_{Z_t^+(\mu )}(w)\}.\end{equation}
Set $b:=b_2/b_1$ and consider a $\frac{\epsilon }{bt}$-net $N$ of the Euclidean unit sphere $S^{n-1}$ with  cardinality $|N|\ls (1+2bt/\epsilon )^n\ls (3bt/\epsilon )^n$; for a proof of the estimate on the cardinality of $N$ see e.g. \cite[Lemma~5.2.5]{AGA-book}.
We define
$$W=\bigcap_{\xi\in N}\left\{x:\langle x,\xi\rangle_+\ls \frac{1}{1+\epsilon } h_{Z_t^+(\mu )}(\xi )\right\}.$$
Let $x\in W$.
Then, $\langle x,\xi\rangle_+\ls\frac{1}{1+\epsilon } h_{Z_t^+(\mu )}(\xi )$ for all $\xi\in N$.
We will show that
$(1-\epsilon )\langle x,w\rangle_+\ls h_{Z_t^+(\mu )}(w)$ for all $w\in S^{n-1}$, which is equivalent
to $(1-\epsilon )x\in Z_t^+(\mu )$. We set
$$\alpha_{\mu }(x):=\max\left\{\frac{\langle x,w\rangle_+}{h_{Z_t^+(\mu )}(w)}:w\in S^{n-1}\right\}$$
and consider $v\in S^{n-1}$ such that $\langle x,v\rangle_+=\alpha_{\mu }(x)\cdot h_{Z_t^+(\mu )}(v)$.
There exists $\xi\in N$ such that $|\xi -v|\ls \frac{\epsilon }{bt}$. Using the fact that
$\langle x,v-\xi\rangle_+\ls \alpha_{\mu }(x)h_{Z_t^+(\mu )}(v-\xi )$, we write
$$\langle x,v\rangle_+ \ls \langle x,\xi \rangle_+ +\langle x,v-\xi\rangle_+ \ls \frac{1}{1+\epsilon } h_{Z_t^+(\mu )}(\xi )
+\alpha_{\mu }(x)h_{Z_t^+(\mu )}(v-\xi ).$$
From \eqref{muZ-1} it follows that
$$\langle x,v\rangle_+ \ls \frac{1}{1+\epsilon } h_{Z_t^+(\mu )}(\xi )+\epsilon \alpha_{\mu }(x)h_{Z_t^+(\mu )}(v)=
\frac{1}{1+\epsilon } h_{Z_t^+(\mu )}(\xi )+\epsilon \langle x,v\rangle_+,$$
which gives $$\langle x,v\rangle_+\ls \frac{1}{1-\epsilon^2} h_{Z_t^+(\mu )}(\xi ).$$
Moreover,
$$h_{Z_t^+(\mu )}(\xi )\ls h_{Z_t^+(\mu )}(v)+h_{Z_t^+(\mu )}(\xi -v)\ls h_{Z_t^+(\mu )}(v)+\epsilon h_{Z_t^+(\mu )}(v)
=(1+\epsilon )h_{Z_t^+(\mu )}(v),$$
which finally gives $\alpha_{\mu }(x)\ls 1/(1-\epsilon )$. This shows that $(1-\epsilon )W\subseteq Z_t^+(\mu )$.
For every $\xi\in N$ we have
$$\mu (\{x:\langle x,\xi\rangle_+\gr (1+\epsilon )\|\langle \cdot ,\xi\rangle_+ \|_t\})\ls (1+\epsilon )^{-t}.$$
Since $\delta\in (0,1)$ we have $0<\epsilon <1/5$, therefore $\frac{(1+\epsilon )^2}{1-\epsilon }\ls 1+5\epsilon =1+\delta $. Then,
\begin{align*}\mu ((1+\delta )Z_t^+(\mu )) &\gr \mu \left (\frac{(1+\epsilon)^2}{1-\epsilon }Z_t^+(\mu )\right )\gr \mu ((1+\epsilon)^2W)\\
&=\mu\left(\bigcap_{\xi\in N}\left\{x:\langle x,\xi\rangle_+\ls (1+\epsilon ) h_{Z_t^+(\mu )}(\xi )\right\}\right)\\
&\gr 1-|N|\cdot (1+\epsilon )^{-t}\gr 1-(C^{\prime }_{\epsilon }t)^n(1+\epsilon )^{-t},\end{align*}
where $C_{\epsilon }^{\prime }=3b/\epsilon $. It follows that there exists $C_{\epsilon }>1$ such that if $t\gr C_{\epsilon }n\ln n$ then
\begin{equation}\label{eq:Cepsilon}(C_{\epsilon }^{\prime }t)^n(1+\epsilon )^{-t}\ls (1+\epsilon )^{-t/2}\ls e^{-\epsilon t/4}.\end{equation}
To see this, consider the function
$$\ell (t)=\frac{t}{2}\ln (1+\epsilon )-n\ln (3bt/\epsilon ).$$
It is easily checked that $\ell $ is increasing on $[2n/\ln (1+\epsilon ),\infty )$. Therefore, if $t\gr C_{\epsilon }n\ln n$
where $C_{\epsilon }=\frac{C}{\epsilon }\ln\left(\frac{2}{\epsilon }\right)$ for a large enough absolute constant $C>0$,
one can check that $\ell (t)\gr \ell (C_{\epsilon }n\ln n)>0$. This implies \eqref{eq:Cepsilon}.
Since $\epsilon =\delta /5$, we obtain the assertion of the proposition with the stated dependence
of the constants $C_{\delta },c_{\delta }$ on $\delta $.
\end{proof}

\smallskip

For the proof of Theorem~\ref{th:lower-threshold} we also need a basic fact that plays a main role in the proof of all
the lower thresholds that have been obtained so far. It is stated in the form below in \cite[Lemma~3]{Chakraborti-Tkocz-Vritsiou-2021}.
For a proof see \cite{DFM} or \cite[Lemma~4.1]{Gatzouras-Giannopoulos-2009}.

\begin{lemma}\label{lem:inclusion}For every Borel subset $A$ of ${\mathbb R}^n$ we have that
$$1-\mu^N(K_N\supseteq A)\ls 2\binom{N}{n}\left (1-\inf_{x\in A}\varphi_{\mu }(x)\right)^{N-n}.$$
Therefore,
$${\mathbb E}_{\mu^N}[\mu (K_N)]\gr \mu (A)\left (1-2\binom{N}{n}\left (1-\inf_{x\in A}\varphi_{\mu }(x)\right)^{N-n}\right).$$
\end{lemma}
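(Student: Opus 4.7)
I would prove the first displayed inequality first; the second follows from it by a one-line expectation estimate. The strategy, standard since Dyer-F\"uredi-McDiarmid, is to decompose the event $\{A\not\subseteq K_N\}$ according to which facet of $K_N$ witnesses the separation. Concretely, if $A\not\subseteq K_N$, then picking $x\in A\setminus K_N$ and applying the separating hyperplane theorem produces a supporting hyperplane of $K_N$ that strictly separates $x$ from $K_N$. Since $K_N$ is a polytope, this hyperplane lies along a facet and is therefore spanned by some $n$-element subset $S\subseteq\{1,\ldots,N\}$ of $X_1,\ldots,X_N$. Log-concavity forces $\mu$ to be absolutely continuous, so almost surely the $X_i$'s are in general position and the facet structure is well defined. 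Thus, up to a $\mu^N$-null event,
$$\{A\not\subseteq K_N\}\subseteq \bigcup_SE_S,$$
where $S$ ranges over the $\binom{N}{n}$ subsets of size $n$ and $E_S$ is the event that $(X_i)_{i\in S}$ spans a hyperplane $H_S$, all $X_j$ with $j\notin S$ lie on one closed side of $H_S$, and the opposite open side meets $A$.

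\smallskip

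Write $\varphi:=\inf_{x\in A}\varphi_{\mu}(x)$. To estimate $\mu^N(E_S)$ I would fix $S$, condition on $(X_i)_{i\in S}$, and let $H_S^{\pm}$ denote the two closed half-spaces bounded by the hyperplane they span. For each orientation $\varepsilon\in\{+,-\}$, consider the sub-event of $E_S$ in which all $X_j$, $j\notin S$, lie in $H_S^{\varepsilon}$. On this sub-event the opposite open half-space meets $A$ at some $x_0$, so the closed half-space opposite to $H_S^{\varepsilon}$ belongs to ${\cal H}(x_0)$ and has $\mu$-measure at least $\varphi_{\mu}(x_0)\gr\varphi$; since $\mu(\partial H_S)=0$ by non-degeneracy, we conclude that $\mu(H_S^{\varepsilon})\ls 1-\varphi$. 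As the $X_j$ with $j\notin S$ are i.i.d.\ from $\mu$ and independent of $(X_i)_{i\in S}$, this sub-event has conditional probability at most $(1-\varphi)^{N-n}$. Summing over the two orientations and taking expectation over $(X_i)_{i\in S}$ gives $\mu^N(E_S)\ls 2(1-\varphi)^{N-n}$.

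\smallskip

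A union bound over the $\binom{N}{n}$ choices of $S$ then produces the first displayed inequality. The second follows at once from the pointwise bound $\mu(K_N)\gr \mu(A)\mathbf{1}_{\{K_N\supseteq A\}}$: taking $\mu^N$-expectation gives
$${\mathbb E}_{\mu^N}[\mu(K_N)]\gr \mu(A)\cdot \mu^N(K_N\supseteq A)\gr \mu(A)\left(1-2\binom{N}{n}(1-\varphi)^{N-n}\right).$$

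\smallskip

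The only substantive step is the geometric reduction at the beginning: verifying that an arbitrary separating hyperplane between a point outside $K_N$ and $K_N$ can be replaced by a supporting hyperplane along a facet of $K_N$ (equivalently, one spanned by exactly $n$ of the $X_i$'s) and tracking both possible orientations of this hyperplane, which is the source of the factor $2$. Once this combinatorial description is in place, the probabilistic estimate is a direct application of the definition of the half-space depth $\varphi_{\mu}$ and of the independence of the $X_j$'s, so no further difficulty is expected.
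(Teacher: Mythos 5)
Your proof is correct and is exactly the standard Dyer--F\"uredi--McDiarmid argument (decomposition over facet-spanning $n$-subsets, conditioning, and a union bound with the factor $2$ from the two orientations) that the paper itself does not reprove but cites from \cite{DFM}, \cite{Gatzouras-Giannopoulos-2009} and \cite{Chakraborti-Tkocz-Vritsiou-2021}. The only point you leave as a remark --- that a point outside the (a.s.\ full-dimensional) polytope $K_N$ violates some facet inequality, so the separating hyperplane may be taken through exactly $n$ of the $X_i$ in general position --- is a standard fact and is handled correctly.
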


\begin{proof}[Proof of Theorem~$\ref{th:lower-threshold}$]Let $0<\delta <1$ and set $\epsilon =\delta /3$.
Let $\mu$ be a centered log-concave probability
measure on ${\mathbb R}^n$. Since the expectation ${\mathbb E}_{\mu^N}\big[\mu ((1+\delta )K_N)\big]$
is a linearly invariant quantity, we may assume that $\mu$ is isotropic. 
From Lemma~\ref{lem:measure-1} we know that for every $x\in (1-\epsilon) Z_t^+(\mu )$ we have
\begin{equation*}\varphi_{\mu }(x)\gr\frac{(1-(1-\epsilon)^t)^2}{C_1^t},\end{equation*}
where $C_1>1$ is an absolute constant. Then, taking into account the fact that $1-\epsilon >2/3$, we get
\begin{equation*}\mu^N\Big(K_N\supseteq (1-\epsilon )Z_t^+(\mu )\Big)
\gr 1-2\binom{N}{n}\left [1-\frac{(1-(1-\epsilon)^t)^2}{C_1^t}\right]^{N-n}.\end{equation*}
By the mean value theorem we have $1-(1-\epsilon )^t=t\epsilon z^{t-1}$ for some $z\in (1-\epsilon ,1)$, and hence
$1-(1-\epsilon )^t\gr t\epsilon (1-\epsilon)^{t-1}$. Taking also into account the fact that $1-\epsilon >2/3$, we get
\begin{align*}\mu^N\Big(K_N\supseteq (1-\epsilon )Z_t^+(\mu )\Big)
&\gr 1-2\binom{N}{n}\left [1-\frac{({t\epsilon}(1-\epsilon)^{t-1})^2}{C_1^t}\right]^{N-n}\\
&\gr 1-\left(\frac{2eN}{n}\right)^n\exp \left(-(N-n)\frac{(t\epsilon )^2}{(3C_1)^t}\right).
\end{align*}
This last quantity tends to $1$ as $n\to\infty $ if
\begin{equation}\label{eq:conditionC1}(3C_1)^tn\ln (4eN/n)<(N-n)(t\epsilon )^2,\end{equation}
and assuming that $\delta\in (1/n^2,1)$ and $t\gr C_{\epsilon }n\ln n$ where $C_{\epsilon }$ is the constant from Proposition~\ref{prop:muZ},
we check that \eqref{eq:conditionC1} holds true if $N\gr \exp (C_2t)$ for a large enough absolute constant $C_2>0$.

Note that $\epsilon =\delta /3$ implies that $1+\delta >\frac{1+\epsilon }{1-\epsilon }$. Then, if $N\gr \exp (C_2C_{\epsilon }n\ln n)$
we see that
\begin{align*}{\mathbb E}_{\mu^N}\left[\mu \left((1+\delta )K_N\right)\right] &\gr  {\mathbb E}_{\mu^N}\left[\mu \left(\frac{1+\epsilon }{1-\epsilon }K_N\right)\right]
\gr \mu ((1+\epsilon )Z_t^+(\mu ))\times \mu^N\Big(K_N\supseteq (1-\epsilon )Z_t^+(\mu )\Big)\\
&\gr \big(1-e^{-c\epsilon t}\big)\left[1-\left(\frac{2eN}{n}\right)^n\exp \left(-(N-n)\frac{(t\epsilon )^2}{(3C_1)^t}\right)\right]\longrightarrow 1
\end{align*}
as $n\to\infty $.
\end{proof}

\smallskip

We have already mentioned that Theorem~\ref{th:lower-threshold} provides a weak threshold in the sense that we estimate the
expectation ${\mathbb E}_{\mu^N}\big(\mu ((1+\delta )K_N)\big)$ (for an arbitrarily small but positive value of $\delta )$ while
the original question is about ${\mathbb E}_{\mu^N}\big(\mu (K_N)\big)$. The next result provides an estimate where ``$\delta $ is removed", however the dependence on $n$ is worse. The argument below was suggested by the referee and replaces
our much more complicated original argument, leading to the same final estimate.

\begin{theorem}\label{th:non-sharp}There exists an absolute constant $C>0$ such that
$$\inf_{\mu }\Big(\inf\Big\{ {\mathbb E}_{\mu^N}\big[\mu (K_N)\big]: N\gr \exp (C(n\ln n)^2u(n))\Big\}\big)\longrightarrow 1$$
as $n\to\infty $, where the first infimum is over all log-concave probability measures $\mu $ on ${\mathbb R}^n$
and $u(n)$ is any function with $u(n)\to\infty $ as $n\to\infty $.
\end{theorem}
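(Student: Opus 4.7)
The plan is to deduce Theorem~\ref{th:non-sharp} from Theorem~\ref{th:lower-threshold} by taking $\delta=\delta_n\to 0$ and using log-concavity to relate $\mu(K_N)$ to $\mu((1+\delta_n)K_N)$. Since ${\mathbb E}_{\mu^N}[\mu(K_N)]$ is translation invariant, we may assume that $\mu$ is centered; and since enlarging $u(n)$ only shrinks the set of admissible $N$ (hence only increases the infimum), we may also assume $u(n)\ls n$.

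The key pointwise estimate is the following: for every measurable set $L\subseteq {\mathbb R}^n$ and every $\delta\in(0,1)$,
$$\mu((1+\delta)L)\ls e^{2n\delta}\,\mu(L).$$
Indeed, the change of variables $x=(1+\delta)y$ gives $\mu((1+\delta)L)=(1+\delta)^n\int_L f_\mu((1+\delta)y)\,dy$, and writing $y=\tfrac{\delta}{1+\delta}\cdot 0+\tfrac{1}{1+\delta}\cdot (1+\delta)y$, log-concavity of $f_\mu$ yields
$$f_\mu(y)\gr f_\mu(0)^{\delta/(1+\delta)}f_\mu((1+\delta)y)^{1/(1+\delta)},$$
hence $f_\mu((1+\delta)y)\ls f_\mu(y)^{1+\delta}/f_\mu(0)^\delta$. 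Combining with Fradelizi's bound \eqref{eq:frad-2}, $f_\mu(y)\ls e^nf_\mu(0)$, gives $f_\mu((1+\delta)y)\ls e^{n\delta}f_\mu(y)$; together with $(1+\delta)^n\ls e^{n\delta}$ this proves the claim.

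Applying the estimate to $L=K_N$ and taking expectations,
$${\mathbb E}_{\mu^N}[\mu(K_N)]\gr e^{-2n\delta_n}\,{\mathbb E}_{\mu^N}[\mu((1+\delta_n)K_N)].$$
Choose $\delta_n:=1/(nu(n))$, so that $n\delta_n=1/u(n)\to 0$ and $e^{-2n\delta_n}\to 1$. Theorem~\ref{th:lower-threshold} applied with this $\delta_n$ requires $N\gr\exp\bigl(C\delta_n^{-1}\ln(2/\delta_n)\cdot n\ln n\bigr)$. Since $u(n)\ls n$, we have $\ln(2/\delta_n)=\ln(2nu(n))\ls 3\ln n$, and therefore the required bound is at most
$$\exp\bigl(C\cdot nu(n)\cdot 3\ln n\cdot n\ln n\bigr)=\exp\bigl(3C(n\ln n)^2u(n)\bigr).$$
Hence, for $N\gr\exp(3C(n\ln n)^2u(n))$, Theorem~\ref{th:lower-threshold} gives ${\mathbb E}_{\mu^N}[\mu((1+\delta_n)K_N)]\to 1$, and the displayed inequality above yields ${\mathbb E}_{\mu^N}[\mu(K_N)]\to 1$, as required.

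The only nontrivial ingredient is the multiplicative density comparison $f_\mu((1+\delta)y)\ls e^{n\delta}f_\mu(y)$, which combines log-concavity with Fradelizi's inequality. The parameter tuning is essentially forced: one needs $n\delta_n\to 0$ so that $e^{-2n\delta_n}\to 1$, while simultaneously $\delta_n^{-1}\ln(2/\delta_n)\cdot n\ln n\ls C(n\ln n)^2u(n)$ so that Theorem~\ref{th:lower-threshold} applies within the allowed range of $N$; both requirements are met by $\delta_n\sim 1/(nu(n))$, and this choice is what produces the $(n\ln n)^2u(n)$ blow-up in the threshold.
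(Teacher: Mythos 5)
Your proposal is correct and follows essentially the same route as the paper: the same pointwise bound $\mu((1+\delta)L)\ls e^{2n\delta}\mu(L)$ obtained from log-concavity of $f_{\mu}$ together with Fradelizi's inequality \eqref{eq:frad-2}, the same choice $\delta_n=1/(nu(n))$, and the same reduction to Theorem~\ref{th:lower-threshold}. The reduction to $u(n)\ls n$ (the paper assumes $u(n)=O(n)$) and the bookkeeping $\ln(2/\delta_n)\ls 3\ln n$ match the paper's computation.
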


\begin{proof}Let $\mu$ be a log-concave probability measure on ${\mathbb R}^n$. Since the expectation ${\mathbb E}_{\mu^N}\big[\mu (K_N)\big]$ is an affinely invariant quantity, we may assume that $\mu$ is centered. Note that if $A\subset {\mathbb R}^n$
is a Borel set, then
$$\mu((1+\delta)A)=\int_{(1+\delta)A}f_{\mu}(x)\,dx=(1+\delta)^n\int_Af_{\mu}((1+\delta)x)\,dx.$$
Since $f_{\mu}$ is log-concave, we see that
$$f_{\mu}((1+\delta)x)\ls f_{\mu}(x)\left(\frac{f_{\mu}(x)}{f_{\mu}(0)}\right)^{\delta}\ls e^{n\delta}f_{\mu}(x)$$
for every $x\in {\mathbb R}^n$, because $f_{\mu}(x)\ls e^nf_{\mu}(0)$ by \eqref{eq:frad-2}. It follows that
\begin{equation}\label{eq:ref-1}\mu((1+\delta)A)\ls (1+\delta)^ne^{n\delta}\mu(A)\ls e^{2n\delta}\mu(A).\end{equation}
Given a function $u(n)$ with $u(n)\to\infty $ as $n\to\infty $, choose $\delta_n=(nu(n))^{-1}$. From \eqref{eq:ref-1}
we see that
$${\mathbb E}_{\mu^N}\big[\mu (K_N)\big]\gr e^{-2n\delta_n}{\mathbb E}_{\mu^N}\big[\mu ((1+\delta_n)K_N)\big].$$
Therefore, we see that
\begin{align*}
&\inf_{\mu }\Big(\inf\Big\{ {\mathbb E}_{\mu^N}\big[\mu (K_N)\big]: N\gr \exp \big (C\delta_n^{-1}\ln \left(2/\delta_n \right)n\ln n\big )\Big\}\Big)\\
&\hspace*{1cm}\gr e^{-2n\delta_n}\inf_{\mu }\Big(\inf\Big\{ {\mathbb E}_{\mu^N}\big[\mu ((1+\delta_n)K_N)\big]: N\gr \exp \big (C\delta_n^{-1}\ln \left(2/\delta_n \right)n\ln n\big )\Big\}\Big)\longrightarrow 1
\end{align*}
as $n\to\infty $, using Theorem~\ref{th:lower-threshold} and the fact that $e^{-2n\delta_n}=e^{-2/u(n)}\to 1$. We may clearly
assume that $u(n)=O(n)$. Then,
$$\delta_n^{-1}\ln \left(2/\delta_n \right)n\ln n=n^2\ln n\ln (2nu(n))u(n)\approx (n\ln n)^2u(n),$$
and the result follows.\end{proof}

\bigskip

\noindent {\bf Acknowledgement.} We are grateful to the referee for extremely valuable comments and
suggestions on the presentation of the results of this article and for kindly communicating to us
the very short and elegant proof of Theorem~\ref{th:non-sharp} via \eqref{eq:ref-1}.
We acknowledge support by the Hellenic Foundation for Research and Innovation (H.F.R.I.) under the ``First Call for H.F.R.I.
Research Projects to support Faculty members and Researchers and the procurement of high-cost research equipment grant"
(Project Number: 1849).

\bigskip


\footnotesize

\bibliographystyle{amsplain}


\medskip

\thanks{\noindent {\bf Keywords:} log-concave probability measures, half-space depth, isotropic constant, random polytopes, convex bodies.

\smallskip

\thanks{\noindent {\bf 2010 MSC:} Primary 60D05; Secondary 62H05, 46B06, 52A40, 52A23.}

\bigskip

\bigskip

\noindent \textsc{Silouanos \ Brazitikos}: Department of
Mathematics, National and Kapodistrian University of Athens, Panepistimioupolis 157-84,
Athens, Greece.

\smallskip

\noindent \textit{E-mail:} \texttt{silouanb@math.uoa.gr}

\bigskip

\noindent \textsc{Apostolos \ Giannopoulos}: Department of
Mathematics, National and Kapodistrian University of Athens, Panepistimioupolis 157-84,
Athens, Greece.

\smallskip

\noindent \textit{E-mail:} \texttt{apgiannop@math.uoa.gr}

\bigskip

\noindent \textsc{Minas \ Pafis}: Department of
Mathematics, National and Kapodistrian University of Athens, Panepistimioupolis 157-84,
Athens, Greece.

\smallskip

\noindent \textit{E-mail:} \texttt{mipafis@math.uoa.gr}

\bigskip

\end{document}